\begin{document}

\title[General Multivariate Hawkes Processes]{Exact and Asymptotic Analysis of General Multivariate Hawkes Processes and Induced Population Processes}

\author{Raviar Karim, Roger J.~A. Laeven, and Michel Mandjes}

\begin{abstract}

%This paper considers multivariate population processes in which a general multivariate Hawkes process serves as the stochastic time-dependent arrival intensity.
%%Where the existing literature predominantly focuses on exponentially decaying intensities inducing Markovianity, or other special cases, in the analysis of distributional properties,
%We %succeed in devising
%devise a methodology to determine the corresponding joint time-dependent distribution,
%allowing for general decay functions, general intensity jumps, and general sojourn times.
%%Our approach heavily relies on suitable extensions of the existing results on cluster representations for Hawkes processes,
%%which is in our multivariate setting considerably more complicated due to the cross-excitations between the various components.
%One of our main results provides an exact description of the joint transform of the components of the population process and its underlying intensity process
%in terms of a fixed-point representation and corresponding convergence results.
%For the case of heavy-tailed intensity jumps, we also derive the asymptotic tail behavior of the population process and its underlying intensity process.
%We demonstrate that, by exploiting the results we establish, arbitrary joint spatial-temporal moments and other distributional properties can now be readily evaluated
%using standard transform differentiation and inversion techniques.

This paper considers population processes in which general, not necessarily Markovian, multivariate Hawkes processes dictate the stochastic arrivals.
We establish results to determine the corresponding time-dependent joint probability distribution,
allowing for general intensity decay functions, general intensity jumps, and general sojourn times.
We obtain an exact, full characterization of the time-dependent joint transform of the multivariate population process and its underlying intensity process
in terms of a fixed-point representation and corresponding convergence results.
We also derive the asymptotic tail behavior of the population process and its underlying intensity process
in the setting of heavy-tailed intensity jumps.
By exploiting the results we establish, arbitrary joint spatial-temporal moments and other distributional properties can now be readily evaluated
using standard transform differentiation and inversion techniques,
and we illustrate this in a few examples.

\vb

\noindent
{\sc Keywords and phrases.} Hawkes processes $\circ$ mutual excitation $\circ$ transform analysis $\circ$ fixed-point theorem $\circ$ branching processes $\circ$ power-law decay $\circ$ heavy tails.

\vb

\noindent
{\sc MSC 2010 subject classifications.} Primary: 60G55; Secondary: 60E10.

%60G55   	Point processes (e.g., Poisson, Cox, Hawkes processes)
%60E10   	Characteristic functions; other transforms
%60F10   	Large deviations

\vb

\noindent
{\sc Affiliations.}
RK and RL are with the Dept. of Quantitative Economics, University of Amsterdam.
RL is also with E{\sc urandom}, Eindhoven University of Technology, and with C{\sc ent}ER, Tilburg University;
his research was funded in part by the Netherlands Organization for Scientific Research under grants NWO VIDI 2009 and NWO VICI 2019/20.
MM is with the Korteweg-de Vries Institute for Mathematics, University of Amsterdam. %, Science Park 904, 1098 XH Amsterdam, the Netherlands.
MM is also with E{\sc urandom}, Eindhoven University of Technology, %Eindhoven, the Netherlands,
and with the Amsterdam Business School, University of Amsterdam; %Amsterdam, the Netherlands;
his research was funded in part the Netherlands Organization for Scientific Research under the Gravitation project N{\sc etworks}, grant 024.002.003.
Email addresses: \tt{r.s.karim@uva.nl}, \tt{r.j.a.laeven@uva.nl}, \tt{m.r.h.mandjes@uva.nl}.

\vb

\noindent
{\it Date}: {\today}.

\end{abstract}

\maketitle

\newpage

\section{Introduction}

%\begin{itemize}
%\item Context: interconnectedness
%\item Technical context: time and space amplification, dependent shocks
%\item Problem
%\item Contribution
%\item Related literature
%\item Covid-19
%\end{itemize}

As the world grows more interconnected,
event shocks tend to spread and cluster more easily and more forcefully.
Prototypical examples include
the contagious spread of diseases across populations,
financial contagion across equity or credit markets,
and cyber infections across technological networks.
The amplification of event shocks over time and in space, that is, %i.e.,
across populations, markets and networks,
arguably constitutes one of the core challenges to modern risk measurement and risk management. %evaluation

In principle, %general,
multivariate point processes can provide a probabilistic description of the occurrence of events,
and their stochastic dependencies, in time and space.
Among multivariate point processes, the class of Hawkes processes (\cite{H71}),
or mutually exciting processes,
provides a natural contender for modeling contagious phenomena.
Different from multivariate Poisson, or more generally L\'evy, processes,
they allow for clustering to occur not just in the spatial (i.e., cross-sectional) dimension,
but also in the temporal (i.e., time-series) dimension.
Originally introduced to stochastically describe epidemics and earthquake occurrences,
Hawkes processes have seen increased interest over the past few years,
in finance (\cite{ACL15,ALP14,BMM15,EGG10,H17}),
social interaction (\cite{CS08,RLMX17}),
neuroscience and genome analysis (\cite{B88,RS10}), and so on.

Analyzing distributional properties of general multivariate Hawkes processes is, however, challenging.
The existing literature often focuses on
the special case of exponential excitation functions under which the Hawkes process
(more precisely, the vector consisting of the counting process and its intensity process jointly)
can be shown to be Markovian
(\cite{ACL15,ALP14,DZ11,DP18,DP20,EGG10,H71,O75}),
%on the setting in which the baseline intensity is large
%(\cite{GZ18-2,GZ18}),
%or on the situation in which the process is nearly unstable (\cite{JR15,JR16}).
or on other specific cases or asymptotic regimes (discussed below in more detail).

In this paper, we establish exact and asymptotic results on distributional properties of general---in our context, not necessarily Markovian---multivariate Hawkes processes
and related population processes, with widespread use across various applications.
In a series of exact results, we obtain a full characterization of the %general
joint transform
of the multivariate population process and its underlying arrival intensity process,
in terms of a fixed-point representation and corresponding convergence results.
From these results, arbitrary joint moments, including auto- and cross-covariances, and other distributional properties such as joint event probabilities,
can be readily obtained
using standard transform differentiation and inversion techniques.

Our exact results exploit a cluster representation of the Hawkes process,
for the univariate self-exciting process first described in \cite{HO74};
see also \cite{DVJ07}.
The cross-excitation phenomenon that is present in multivariate Hawkes processes
significantly complicates the situation %analysis
compared to the univariate setting %situation
where only self-excitation is present.
More specifically, cross-excitation leads to clusters with branches (i.e., offspring)
in the time-series as well as in the cross-sectional dimensions,
generating complex and highly intertwined clusters. %(and associated characterizing functions).
Our analysis of a $d$-dimensional population process whose arrivals are
described by a general multivariate Hawkes process,
yields a fixed-point theorem that characterizes the joint transform of the respective processes
at, potentially multiple, future time points,
involving suitably defined $d\times d$-dimensional objects
to represent the full range of cross-sectional and time-series dependencies.

In a series of asymptotic results, we characterize the tail behavior of our population process
and the underlying intensity process in a setting of heavy-tailed intensity jumps.
These results pertain directly to the respective probability distributions %functions
and enable us to derive associated tail probabilities.
%To achieve this, we exploit our fixed-point theorem along with suitable Tauberian theorems.
We also establish several \textit{class properties}, and irreducibility results, using the nomenclature of Markov chains.

Intuitively, when analyzing the asymptotic tail behavior in our general multivariate setting,
one might expect that the heaviest tail among the tails of the distributions of intensity jumps that excite component $i$ `dominates',
and therefore dictates the %resulting
tail behavior of component $i$.
Our asymptotic results reveal that this intuition is not necessarily true as,
due to the cross-excitation phenomenon, heavy tails originating in different components may propagate to component $i$ indirectly,
through other components in the system.
From the full representation of the mutually exciting behavior within the system as provided by our fixed-point theorem,
along with suitable Tauberian theorems (\cite{BGT87}), we derive a system of vector-valued renewal equations
that jointly characterize the asymptotic tail behavior of our population process and the underlying intensity process.
Both our exact and asymptotic results are directly amenable to numerical evaluation
and we illustrate our results in a %rich
collection of numerical examples.

This paper relates to several branches in the existing literature, which we categorize along three dimensions.
First, a %key ingredient in
starting point for our analysis is provided by the branching structure that underlies a Poissonian cluster representation of multivariate
Hawkes processes.
In the univariate self-exciting setting, this branching structure was first discussed in \cite{HO74};
see also e.g., the formal and extensive treatment in \cite{DVJ07,BMM15}, and \cite[Chapter 4]{L09}.
%\footnote{The extensions of Hawkes processes introduced in \cite{BM96} are not considered in this paper.}
We use a cluster representation for the complex intertwined spatial-temporal structure of general multivariate Hawkes processes
to obtain suitable general and micro-level distributional equalities for our population process
and its underlying intensity process,
which we next exploit to characterize the joint transform.

%First, we make explicit the distributional properties of the cluster processes in a multivariate setting, based on the cluster representation in \cite{HO74}.
%A key ingredient in our analysis is the branching structure of the cluster processes that underlie the counting process; this tree-like structure has been extensively and formally described in \cite[Chapter 4]{L09}.
%An important observation is that this branching structure also applies to the cluster processes corresponding to the intensity and related population process, allowing us to identify micro-level distributional properties that focus on the flow of events between specific components, drawing on the notion of causality described in \cite{BMM15}.

Second, transform analysis and the derivation of transient and stationary moments for Hawkes processes
has received considerable attention in the literature, especially under Markovian assumptions.
In a univariate Markov setting, \cite{DZ11,FZ14} characterize the joint transform of the point process and the underlying intensity process
by relying on the infinitesimal Markov generator,
yielding systems of ODEs for the moments of both processes; see also \cite{DP18,DP20,KSBM18}.
In addition, \cite{KSBM18} and \cite{GZZ18} %briefly
consider the case of non-exponential decay for the probability generating function of the univariate point process
and its joint Laplace transform, respectively.
In a multivariate Markov setting, and more generally in the context of (Markovian) affine point processes,
\cite{DPS00} provide semi-analytic expressions of conditional characteristic functions,
involving solutions to systems of ODEs; see also \cite{ALP14,EGG10}.
Closed-form expressions of stationary moments as Taylor series approximations over short time intervals in a multivariate Markov setting
are derived in \cite{ACL15}
by exploiting operator methods. %and the infinitesimal Markov generator.
We characterize the joint transform for general, possibly non-Markovian, multivariate Hawkes processes and induced population processes, at possibly multiple time points,
through a fixed-point representation,
allowing for general decay functions, general distributions of the intensity jump sizes, and general distributions of the sojourn times.
We are not aware of other work on exact transform and moment characterizations for Hawkes processes that allows for a comparable degree of generality along all these dimensions.

%Second, in the context of exact analysis, we characterize the multivariate joint transform in a general form, namely that of any decay function and of stochastic jump sizes.
%In the univariate Markov setting (\cite{DZ11,FZ14}), the joint transform of the counting process and intensity process is characterized by infinitesimal generator methods, yielding systems of ODEs for the moments of both processes; the latter is also described in \cite{DP20,KSBM18}.
%The non-Markov case is studied in \cite{KSBM18} and the probability generating function is considered, obtaining the distribution of the counting process by inversion.
%In the present paper, we extend these results in the multivariate setting and for the processes jointly.

Third, asymptotic results such as LLNs and FCLTs for multivariate Hawkes processes have been established in e.g.,
\cite{BDHM13,GZ18-3}.
Furthermore, the nearly unstable situation is analyzed in \cite{JR15,JR16}
%,
%the non-linear case is studied in \cite{BM96,T16,Z13},
and the setting of a large initial intensity is considered in \cite{GZ18}.
Large and precise deviation results are obtained in e.g., \cite{BT07,GZ21} for large times and general univariate Hawkes processes
and in \cite{GZ18-2} for a large initial intensity in the Markov case.
%and in \cite{Z15} for the non-linear case.
These large and precise deviation results are obtained in a setting of light-tailed counting and intensity processes.
We study the asymptotic tail behavior of the general
population process and its underlying intensity process
in the setting of \textit{heavy-tailed} intensity jumps.

%Third, we consider the tail probabilities of the marginal components and their asymptotics.
%Several CLT-type results have been obtained in the literature: LLN and FCLT results for multivariate Hawkes processes (\cite{BDHM13});
%when the process is nearly unstable (\cite{JR15,JR16});
%for nonlinear Hawkes processes (\cite{Z13}).
%Different asymptotic results are found when looking at large deviations, studied in the Markov case (\cite{GZ18}) and the nonlinear case (\cite{Z15}).
%We study the tail probabilities of both the counting and intensity process under the assumption of heavy tailed random variables using Tauberian theorems, see also (\cite{KSBM18,ZBM04}).

We finally note that the population process analyzed in this paper may be naturally interpreted and applied in the---now highly relevant---context of epidemiological modeling.
It provides an appealing probabilistic description of the contagious amplification of viruses among populations across the globe.
Recent work that uses Hawkes processes, potentially in conjunction with SIR-models, to stochastically describe pandemics such as COVID-19 includes
\cite{CGM18,C20}.

%We note that the population process we introduce may be placed in the context of epidemiological models, previous work links Hawkes processes to SIR (\cite{CGM18}) and COVID-19 (\cite{C20}); our multivariate setting allows %for contagion across space to be incorporated into the model.
%It may also be regarded as a queue, whose use in conjunction with Hawkes processes is an area of research explored in the univariate case (\cite{DP20,KSBM18}) and in obtaining limit results (\cite{GZ18-3}).

The remainder of this paper is organized as follows.
In Section \ref{sec:ModelBranching}, we define the general multivariate Hawkes process, describe its cluster representation, and discuss some of its properties.
In Section \ref{sec:JointTransforms}, we exploit the branching structure to obtain distributional equalities, which we use to characterize the joint transform of the %general
process under consideration.
In Section \ref{sec:FixedPoint}, we represent the joint transform as the fixed point of a certain mapping and establish corresponding convergence results.
In Section \ref{sec:TailProbs}, we derive the asymptotic tail behavior.
Section \ref{sec:Numerics} contains our numerical illustrations.
Conclusions are in Section \ref{sec:Conclusion}.
The proofs of some auxiliary results are relegated to the Appendix.

%%%%%%%%%%%%%%START%%%%%%%%%%%%%%

\section{Model and Underlying Branching Structure}\label{sec:ModelBranching}

In this section, we provide a formal definition and a cluster representation of general multivariate Hawkes processes,
and discuss their properties and branching structure.
Throughout, we adopt the notation ${\bm x} =(x_1,\ldots,x_d)^\top$, for a given value of $d\in{\mathbb N}$.
%In the first subsection, we give the definition based on the conditional intensity function initially described in \cite{H71}, and discuss several properties.
%In the next subsection, we present the cluster representation of the Hawkes process in the multivariate setting, shown in \cite{HO74} to be equivalent to the intensity based definition.
%We proceed to describe the branching structure that underlies the cluster representation.

\subsection{Definition and properties}

%\begin{itemize}
%    \item Definitie N, $\lambda$, som notatie $\lambda$, vector notatie?
%    \item Stabiliteit conditie, spectraalradius $< 1$, impact op bestaan/uniciteit van stationair Hawkes
%    \item Markov vs niet-Markov (geef vorm van $g_{ij}$ functies), SDE notatie (ito)
%    \item Hawkes als populatie model, definitie Q, relatie Q = N - O waar O = output proces
%\end{itemize}

Hawkes processes constitute a general class of multivariate point processes.
In full generality, we consider a $d$-dimensional c\`adl\`ag point process ${\bm N}(\cdot)\equiv ({\bm N}(t))_{t\in{\mathbb R}}$,
where each increment $N_i(t) - N_i(s)$ records the number of points in component $i\in[d]:=\{1,\dots,d\}$
in the time interval $(s,t]$, with $s<t$.
The points %of $\bm{N}(t)$
will be referred to as \textit{events}, where each event consists of a tuple $(T_r,i)$
that specifies the time of occurrence $T_r\in\rr$
of the $r$-th event and the component $i\in[d]$ in which it takes place.
%We assume that $\bm{N}(t)$ is `simple', i.e., events cannot occur at the same time.
As is well-known, a point process $\bm{N}(\cdot)$ can be characterized by its conditional intensity $\bm{\lambda}(\cdot)$.
The $i$-th component of $\bm{\lambda}(t)$, $i\in[d]$, is given by
\begin{align}
\label{eq: definition general conditional intensity}
    \lambda_i(t) = \lim_{h\downarrow 0} \frac{\ee\big[ N_i(t+h) - N_i(t) \, | \, \cF_{t-}\big]}{h},
\end{align}
where $\cF_{t-}=\sigma(\bm{N}(s):s<t)$ is the sigma algebra of events up to, but not including, time $t$; see e.g.,\ \cite[Chapter 7]{DVJ07}.
We refer to $\bm{\lambda}(\cdot)$ simply as the \textit{intensity}, where it is noted
that it may itself be a stochastic process.
Clearly, $\bm{\lambda}(\cdot)$ is predictable.
%Below we give an intensity based definition of the multivariate Hawkes process.

\begin{definition}%[\sc Intensity]
\label{def: hawkes intensity}
A general {\em multivariate Hawkes process} $($\cite{H71}$)$ is a point process $\bm{N}(\cdot)$ whose components $N_i(\cdot)$, for $i\in[d]$, satisfy
\begin{align}
\label{eq: Hawkes intensity def}
    \begin{cases}
    \pp(N_i(t+\Delta) - N_i(t) = 0 \, | \, \cF_t) = 1 - \lambda_i(t)\Delta + o(\Delta),\\
    \pp(N_i(t+\Delta) - N_i(t) = 1 \, | \, \cF_t) = \lambda_i(t)\Delta + o(\Delta), \\
    \pp(N_i(t+\Delta) - N_i(t) > 1  \, | \, \cF_t) = o(\Delta),
    \end{cases}
\end{align}
as $\Delta \downarrow 0$.
Here, $\cF_t=\sigma(\bm{N}(s):s\leqslant t)$  %\sigma\{ \bm{N}^{-1}(B) \, | \, B \in \cB((\nn^{\rr_+})^d)\}$
is the natural filtration generated by $\bm{N}(\cdot)$.
With $\lambda_{i,\infty}>0$ and $g_{ij}(\cdot)$ non-negative integrable functions,
the intensity $\lambda_i(\cdot)$ takes the form
\begin{align}
\label{eq: intensity lambda def}
    \lambda_i(t)
    &= \lambda_{i,\infty}+ \sum_{j=1}^d  \int_{-\infty}^t B_{ij}(s)\, g_{ij}(t-s)\,\mathrm{d}N_j(s),
\end{align}
where, for each $i,j\in[d]$, the $(B_{ij}(s))_s$ constitutes a sequence of cross-sectionally and serially independently distributed random variables
that are distributed as the generic non-negative random variable $B_{ij}$.
\end{definition}

Informally, the definition above is understood as follows.
The constant $\lambda_{i,\infty}$ represents the {base rate} corresponding to component $i$.
An event generated by $N_j(\cdot)$ in component $j$
leads to a jump in the intensity $\lambda_i(\cdot)$ of component $i$, with $i,j\in[d]$;
its size is distributed as the random variable $B_{ij}$.
%and we refer to $B_{ij}$ as the (random) jump size.
After the occurrence of this event, the decay functions $g_{ij}(\cdot)$ govern the path of the intensity $\lambda_i(\cdot)$
back to the base rate $\lambda_{i,\infty}$.
%Note the time shift to account for the arrival time of the event.

The multivariate Hawkes process $\bm{N}(\cdot)$ is also known as a \textit{mutually exciting} point process.
When an event in component $i$ impacts the intensity of component $i$, we speak of \textit{self-excitation}---a purely temporal effect.
When an event in component $j$ impacts the intensity of component $i$, with $i\neq j$, we speak of
\textit{cross-excitation}---a temporal as well as spatial effect.
Mutually exciting point processes accommodate both effects.
%and consequently, one can think of Hawkes processes as Poisson processes with excitation.

One may introduce functions $h_{ij}(\cdot)= B_{ij}\,g_{ij}(\cdot)$,
where the $B_{ij}$ is understood to be sampled at every event in $N_j(\cdot)$ in the manner described in Definition \ref{def: hawkes intensity}.
These $h_{ij}(\cdot)$, frequently called excitation or impact functions (see e.g.,\ \cite{H71,L09}),
couple the jump size and the decay function in a multiplicative manner.
We can thus compactly rewrite (\ref{eq: intensity lambda def}) in vector notation by setting
\begin{align}
\label{eq: intensity vector notation}
    \bm{\lambda}(t) = \bm{\lambda}_\infty + \int_{-\infty}^t \bm{H}(t-s)\, {\rm d}\bm{N}(s),
\end{align}
where $\bm{H}(\cdot) = (h_{ij}(\cdot))_{i,j\in[d]}$.
%, with $h_{ij}(t) = B_{ij}g_{ij}(t)$.
The matrix form $\bm{H}(\cdot)$ in (\ref{eq: intensity vector notation}),
where for each $j\in[d]$ we define the random vector $\bm{H}^j(\cdot) = (h_{1j}(\cdot),\dots,h_{dj}(\cdot))^{\top}$,
justifies the indexing convention of $h_{ij}(\cdot)$:
$h_{ij}(\cdot)$ describes the impact of events in \textit{source} component $j$
on the intensity $\lambda_i(t)$ of \textit{target} component $i$.
%In what follows, we always denote the source dimension of an event by $j$ and the target dimension by $i$.
We note that the corresponding random vectors $\bm{B}^j$ could be considered as
mark random vectors associated to each event in component $j$,
such that tuples of the form $(T_r,j,\bm{B}^j_r)$ constitute a \textit{multivariate marked Hawkes process};
see \cite{L09} and \cite[Chapter 6.4]{DVJ07}.
Note that in the special case that $B_{ij} \equiv 0$ for all $i,j\in[d]$,
$\lambda_i(\cdot) \equiv \lambda_{i,\infty}$ such that $\bm{N}(\cdot)$ reduces to a $d$-dimensional homogeneous Poisson process with rate $\bm{\lambda}_{\infty}$.

With $(B_{ij,r})_r$ a sequence of i.i.d.\ random variables distributed as the generic $B_{ij}$,
observe that $\lambda_i(t)$ can be completely expressed in terms of the aggregated impact of the $B_{ij,r}$ due to all past events.
%Since events consist of an arrival time and dimension,
Indeed, considering the tuples $\{(T_r,k_r)\}_{r=1}^{R(t)}$, where $k_r \in [d]$ is the component of the event and $R(t) \in \nn$ is the total number of events strictly prior time $t$,
%$T_r$ is the arrival time and $k_r \in [d]$ is the dimension of the $r$-th event.
%This allows us to
we can rewrite (\ref{eq: intensity lambda def}) as
\begin{align}
\label{eq: intensity lambda sum notation}
    \lambda_i(t) = \lambda_{i,\infty} + \sum_{r=1}^{R(t)} B_{ik_r,r}g_{ik_r}(t-T_r).
\end{align}

Existence, uniqueness and positivity of the intensity $\lambda_i(t)$ is guaranteed if $g_{ij}(\cdot)$ satisfies the conditions given in Definition \ref{def: hawkes intensity}
and $B_{ij} < \infty$ with probability one; see e.g.,\ \cite[Example 7.2(b)]{DVJ07}.
To guarantee stationarity of the Hawkes process, \cite{H71} shows that a stability condition must be imposed.
In the current multivariate setting, this condition takes the form $\rho(\lVert \bm{H} \rVert) < 1$, where $\lVert \bm{H}\rVert = (\lVert h_{ij} \rVert)_{i,j\in[d]}$ with
\[\lVert h_{ij} \rVert = \ee[B_{ij}] \cdot\lVert g_{ij} \rVert_{L^1(\rr_+)} = \ee[B_{ij}] \int_0^\infty g_{ij}(t)\ddiff t,\]
and $\rho(\cdot)$ denotes the spectral radius; see also \cite{BM96}.
In this case, as shown in \cite{H71}, the entries of the expected stationary intensity vector are the constants
$\lambda_i := \ee[\lambda_i(t)] = \ee[\ddiff N_i(t)]/\ddiff t$.
In vector form these intensities can be expressed as $\bm{\lambda} = (\bm{I} - \lVert \bm{H} \rVert)^{-1} \bm{\lambda}_{\infty}$.
In the sequel, we assume the stability condition applies.

While the methodology developed in this work applies to general non-negative integrable functions $g_{ij}(\cdot)$,
a parametrization of special interest is that of exponential decay.
\begin{example}[\sc Exponential]
Let the decay functions $g_{ij}(\cdot)$ be of exponential form $g_{ij}(t) = e^{-\alpha_{ij}t}$,
for some $\alpha_{ij} > 0$ known as the decay rate.
Eqn.\ \eqref{eq: intensity lambda def} is in this case given by
\begin{align}
\label{eq: intensity lambda exponential decay}
    \lambda_i(t) = \lambda_{i,\infty}+ \sum_{j=1}^d  \int_{-\infty}^t B_{ij}(s) e^{-\alpha_{ij}(t-s)}\mathrm{d}N_j(s),
\end{align}
and can, by It\^{o}'s Lemma, alternatively be expressed in SDE notation as
\begin{align}
    \ddiff\lambda_i(t) = \sum_{j=1}^d \alpha_{ij}(\lambda_{i,\infty} -\lambda_i(t))\ddiff t + \sum_{j=1}^d B_{ij}(t) \,\ddiff N_j(t).
\end{align}
A distinctive property of exponential decay is that the joint process $(\bm{N}(\cdot),\bm{\lambda}(\cdot))$ constitutes a Markov process;
see \cite{L09,O75}.
\end{example}
The Markov property yields a number of useful tools to analyze distributional properties of the Hawkes process in the case that $g_{ij}(\cdot)$ is of exponential form.
The explicit treatment in \cite{ACL15} relies on operator methods applied to the Markov infinitesimal generator.
In \cite{FZ14,DZ11}, the Markov infinitesimal generator is used to characterize the conditional joint transform of $(N(\cdot),\lambda(\cdot))$ as the solution to a system of ODEs;
see also \cite{DPS00}.
A similar characterization is given in \cite{KSBM18}, which exploits the Markov property directly.
%From there, one can obtain closed-form expressions for the conditional moments of $N(t)$ and $\lambda(t)$.

Departing from exponential decay renders the Hawkes process to be non-Markov in general.
An important example, extensively used to model e.g.,\ the temporal clustering of earthquake occurrences,
is the power-law parametrization proposed in \cite{O88}.

\begin{example}[\sc Power]
Consider the decay functions $g_{ij}(\cdot)$ to be of power-law type
by setting $g_{ij}(t) = 1/(c_{ij} + t)^{p_{ij}}$ for some $c_{ij} \in \rr_+$ and $p_{ij} > 1$.
Note that $p_{ij}>1$ ensures integrability.
In this case, by Eqn.\ \eqref{eq: intensity lambda def},
\begin{align}
\label{eq: intensity lambda power-law decay}
    \lambda_i(t)= \lambda_{i,\infty}+ \sum_{j=1}^d \int_{-\infty}^t \frac{B_{ij}(s)}{(c_{ij} + t-s)^{p_{ij}}}\,\mathrm{d}N_j(s).
\end{align}
\end{example}

Analyzing distributional properties of the Hawkes process with power-law decay is considerably more complex
than under exponential decay, due to the system being non-Markovian.
From an applications point of view, the power-law type and related parametrizations are of significant interest,
as they enable to model multivariate dynamic behavior that exhibits long-memory properties across time and space.
%We remark that the research in \cite{O88} also focused only on the univariate setting.

The Hawkes process can be used in conjunction with other models, such as (affine) jump-diffusion models \cite{ACL15,EGG10}
or epidemiological models \cite{CGM18,C20}.
In this paper, we consider a model that
also allows for \textit{departures}.
Considering the events generated by $\bm{N}(\cdot)$ as \textit{arrivals},
we introduce the \textit{Hawkes population process} $\bm{Q}(\cdot)$ by setting for $t\in\rr$
\begin{align}
\label{eq: definition queue arrival-depart}
    \bm{Q}(t) := \bm{N}(t) - \bm{D}(t),
\end{align}
where $\bm{D}(\cdot)$ is a point process such that $D_j(t)$ records the number of departures in component $j$ among the arrivals up to and including time $t$.
In what follows, we assume that all events $(T_r,j)$
have \textit{sojourn times} (i.e., times spent in component $j$)
that form a sequence of i.i.d.\ random variables
distributed as the non-negative random variable $J_j$.
%There are many ways to model the departures and
Of course, if $J_{j}\equiv\infty$ for all $j\in[d]$, %no event ever departs such that
then $\bm{D}(\cdot) \equiv 0$, and hence $\bm{Q}(\cdot) = \bm{N}(\cdot)$.

Two examples of population processes occur in demography and epidemiology.
First, suppose $Q_i(\cdot)$ represents the number of (living or active) people in population $i\in[d]$
and $D_i(\cdot)$ the number of deaths.
Assuming that the individual lifetimes $J_i$ are exponentially distributed with mean $\mu_i^{-1}$ for some $\mu_i\geqslant 0$,
the process $D_i(\cdot)$ is an inhomogeneous Poisson process with rate $\mu_i Q_i(t)$; cf.\ \cite{ABGK93}.
Second, $Q_i(\cdot)$ may represent the number of infected people in geographic location $i\in[d]$
and $D_i(\cdot)$ the recovery process.

\begin{figure}
\includegraphics[scale=.52]{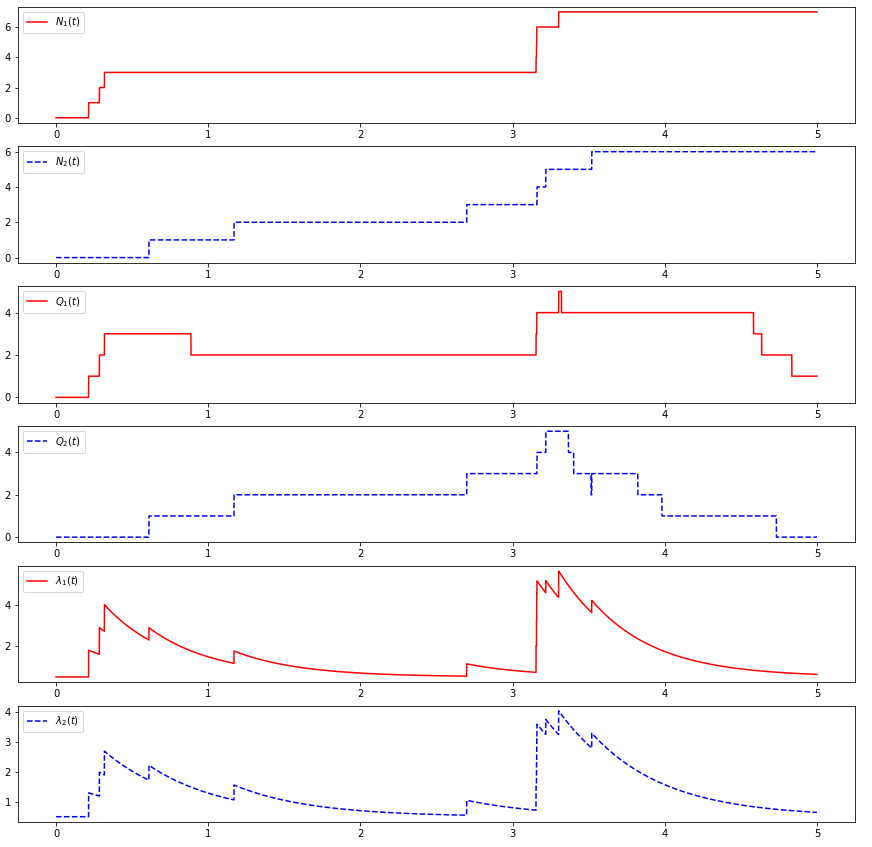}
\caption{\small \textit{Bivariate model: Sample paths of
$\bm{N}(\cdot)=(N_1(\cdot),N_2(\cdot))$,
$\bm{Q}(\cdot) = (Q_1(\cdot),Q_2(\cdot))$ and
$\bm{\lambda}(\cdot)=(\lambda_1(\cdot),\lambda_2(\cdot))$
under exponential decay and exponentially distributed sojourn times.
Parameters: $\lambda_{1,\infty} = \lambda_{2,\infty} = 0.5$, $\mu_1 = \mu_2 =1$,
$\alpha_{11}=\alpha_{12} = 2.3$, $\alpha_{21} = \alpha_{22} = 2$, $B_{11} \equiv 1.3$, $B_{12} \equiv 0.6$, $B_{21} \equiv 0.8$, $B_{22} \equiv 0.5$.}}
\label{fig: bivariate sample path distr eq}
\end{figure}

In Figure \ref{fig: bivariate sample path distr eq}, we plot a sample path of our model when $d=2$.
%Included in the plot is the intensity $\bm{\lambda}(t)$.
One readily observes that each event,
irrespective of which component serves as the source, leads to jumps in both intensities,
depending upon the $B_{ij}$'s involved, as described by \eqref{eq: intensity vector notation}.
Excited intensities increase the likelihood of future events and
it is clearly visibly that events in $\bm{N}(\cdot)$ are clustered in both time and space.
Note furthermore that the departures occurring in $\bm{Q}(\cdot)$ do not impact the intensities.

\subsection{Cluster representation and branching structure}

An interesting approach to represent the Hawkes process,
which will play a pivotal role in our analysis, is as a Poisson cluster process.
This so-called cluster representation is first described in \cite{HO74}
in the setting of the conventional single-dimensional Hawkes process;
see also \cite[Example 6.3(c)]{DVJ07} and \cite[Ch.\ IV]{L09}.
It essentially consists of \textit{immigrant} processes that generate events from the base rates
and \textit{cluster} processes that generate events occurring due to self- and cross-excitation.
In our multivariate setting, the cluster representation can be described as follows.
%In our multivariate setting, a cluster representation requires to keep track of the source and target dimensions when generating events.
%Recall that events consist of a tuple $(T_r,j)$ with the arrival time $T_r$ and dimension $j$ of the $r$-th event.

\begin{definition}[\textsc{Cluster representation}]
\label{def: hawkes cluster}
On the interval $[0,\tau]$ with $\tau \in \rr_+ \cup \{\infty\}$,
we define a point process $\bm{N}(\cdot)$
through a sequence of events %$\{(T_r,j)\}_{r=1}^{R_j(\cdot)}$ in each component $j\in[d]$,
generated according to the following procedure:
\begin{enumerate}
    \item[{\em (1)}] For each $j \in [d]$, let $I_j(\cdot)$ be a homogeneous Poisson process with intensity $\lambda_{j,\infty}$
    that generates immigrant events \[\left\{(T^{(0)}_r,j)\right\}_{r=1}^{R_j^{(0)}(t)},\]
    where $R_j^{(0)}(t)$ is the total number of immigrant events in component $j$ strictly prior to time $t\in[0,\tau]$.
    \item[{\em (2)}]Next, each immigrant event $(T^{(0)}_r,j)$ generates
    a $d$-dimensional cluster process %$\bm{S}_j^{\bm{N}}(\cdot) = (S_{1\leftarrow j}^{\bm{N}}(\cdot),\dots,S_{d\leftarrow j}^{\bm{N}}(\cdot))^\top$, where $u = t- T^{(0)}_r$,
    consisting of generations of descendants:
    \begin{enumerate}
        \item[{\em (a)}] $(T_r^{(0)},j)$ generates first-generation events \[\left\{(T_r^{(1)},m)\right\}_{r=1}^{R_m^{(1)}(t)}\] in each target component $m\in[d]$
        according to $K_{mj}(t-T_r^{(0)})$, where $K_{mj}(\cdot)$ is an inhomogeneous Poisson process with intensity $B_{mj,r}g_{mj}(\cdot)$,
        given the jump size $B_{mj,r}$ associated to $(T_r^{(0)},j)$.
        %which has intensity at time $u$ that is distributed as $B_{mj}g_{mj}(\cdot - T_r^{(0)})$.
        \item[{\em (b)}] Upon iterating (a) above, given the $r$-th event of the $(n-1)$-th generation in source component $m\in[d]$,
        descendant $(T_r^{(n-1)},m)$ generates $n$-th generation events \[\left\{(T_{r}^{(n)},l)\right\}_{r=1}^{R_l^{(n)}(t)}\]
        in each target component $l\in[d]$ according to $K_{lm}(t-T^{(n-1)}_r)$.
    \end{enumerate}
\end{enumerate}
By taking the Cartesian product over all components
and the union over all generations---immigrants and their descendants---,
we have that
\begin{align}
\label{eq: events union hawkes cluster}
    \bm{N}(t) =\bigcup_{n=0}^\infty \Big(\{(T_r^{(n)},1)\}_{r=1}^{R_1^{(n)}(t)} \times \cdots \times \{(T_r^{(n)},d)\}_{r=1}^{R_d^{(n)}(t)}\Big),
\end{align}
constitutes a general $d$-dimensional Hawkes process.
\end{definition}

The cluster representation in Definition~\ref{def: hawkes cluster}
agrees with Definition~\ref{def: hawkes intensity}
provided the stability condition is satisfied,
as shown in \cite{HO74,DVJ07}.
The stability condition has a natural interpretation in the context of the cluster representation:
it requires that each individual event generates a.s.\ finitely many descendants; see \cite[Example 8.3(c)]{DVJ07}.
The proof in \cite{HO74} amounts to comparing the cluster representation to an age-dependent birth-death process
allowing for immigration, and where the death process is set to zero.
%As such,
Hence, we can naturally extend the cluster representation to our population process $\bm{Q}(\cdot)$,
by including departures seen as a death process.

The richness, and complexity, of the cluster representation is apparent from the cluster processes that each immigrant event generates.
%revealing several distributional properties.
For each immigrant $(T_r^{(0)},j)$ in component $j\in[d]$, we explicitly denote the $d$-dimensional cluster process it generates by $\bm{S}_j^{\bm{N}}(\cdot)$,
counting the events in each component of the cluster described in Part~(2) of Definition~\ref{def: hawkes cluster}.
Henceforth, the time $u= t- T_r^{(0)}$ is always understood as the remaining time
after the arrival of the immigrant event.
The cluster process $\bm{S}_j^{\bm{N}}(u)$ at time $u$ is then given by
%as described in Part~(2) of Definition~\ref{def: hawkes cluster}.
%Henceforth, the time $u=t-T_r^{(0)}$ is always understood as the remaining time after the arrival of the immigrant event that generates the cluster.
%The cluster $\bm{S}_j^{\bm{N}}(\cdot)$ is then completely described by
\begin{align}
\label{eq: cluster N definition}
    \bm{S}_j^{\bm{N}}(u) :=
    \begin{bmatrix}
    S^{\bm{N}}_{1\leftarrow j}(u) \\
    \vdots \\
    S^{\bm{N}}_{d\leftarrow j}(u)
    \end{bmatrix},
\end{align}
where each entry $S^{\bm{N}}_{i\leftarrow j}(u)$ records the number of events generated in component $i$, up to and including time $u$,
with as oldest ancestor the immigrant event in component $j$ that generates the cluster $\bm{S}_j^{\bm{N}}(\cdot)$; see also %the subsection on causality in
\cite{BMM15}.
Each entry $S^{\bm{N}}_{i\leftarrow j}(\cdot)$ therefore contains part of the entire cluster described in Part~(2) of Definition~\ref{def: hawkes cluster}. %, namely the events generated into component $i$.
(Note that the immigrant event $(T^{(0)},j)$ itself is included in the cluster when $i=j$ to avoid double counts.)
Due to our multivariate setting, events recorded in $S^{\bm{N}}_{i\leftarrow j}(\cdot)$ may have propagated through other dimensions $m\in[d]$ before arriving in $i$.
Thus, we have to meticulously keep track of the event times and components of descendant events.

Three basic distributional properties, which we will later exploit to operationalize the cluster representation, are noteworthy.
First, each immigrant event from source component $j\in[d]$ generates a cluster according to the same distribution
modulo a time shift to account for the arrival time;
see also \cite[Section 6.3]{DVJ07}.
Hence, we index the cluster process $\bm{S}_j^{\bm{N}}(\cdot)$ by the source component it corresponds to, not by individual immigrant.
Second, each event from the same source component generates offspring
according to the same iterative procedure and, as such, there is a \textit{branching structure}, and \textit{self-similarity}, underlying each cluster.
%Observe that the descendants from the same source dimension $m\in[d]$ generate events into target dimension $l\in[d]$ according to the \textit{same} process $K_{lm}(\cdot)$, again modulo a time shift, implying similar distributional properties within the cluster.
Third, the cluster processes are generated independently across source components:
$\bm{S}_j^{\bm{N}}(\cdot)\perp\!\!\!\perp \bm{S}_m^{\bm{N}}(\cdot)$, $j,m\in[d]$, $j\neq m$.

%Observe that the cluster representation does not assume any parametrization of the decay functions and thus offers a very general framework to work in. The richness and complexity of the cluster representation is in part (2) of Definition \ref{def: hawkes cluster}, as each event generates offspring according to the \textit{same} recursive procedure, the distributional properties of each event in the cluster is similar. Indeed, there is a self-similar structure in the generations of events that occur within each cluster that encodes all information regarding self- and cross-excitation. However, one has to meticulously keep track of the event times and in which dimensions they occur to properly account for temporal shifts and self- and cross-excitation.

Our analysis will exploit the branching structure that underlies the cluster representation for all three processes $\bm{N}(\cdot)$, $\bm{Q}(\cdot)$ and $\bm{\lambda}(\cdot)$
to characterize distributional properties of the processes jointly.
%Clearly the three processes are different, as $\bm{N}(t)$ counts all events while events in $\bm{Q}(t)$ can depart, and $\bm{\lambda}(t)$ records the aggregated excitation of all events through $B_{ij}$ and $g_{ij}$.
%However, they are related through the self- and cross-exciting mechanisms, see the sample paths in Figure \ref{fig: bivariate sample path distr eq}.
To explicitly describe their dynamics, we introduce the following two cluster processes,
resembling the cluster process $\bm{S}_j^{\bm{N}}(\cdot)$ in \eqref{eq: cluster N definition}.
Denote the $\nn^d$-valued cluster process $\bm{S}_j^{\bm{Q}}(\cdot)$ and the $\rr_+^d$-valued cluster process $\bm{S}_j^{\bm{\lambda}}(\cdot)$,
corresponding to $\bm{Q}(\cdot)$ and $\bm{\lambda}(\cdot)$ respectively, at time $u$ by
\begin{align}
\label{eq: clusters Q,L definition}
    \bm{S}^{\bm{Q}}_j(u) :=
    \begin{bmatrix}
    S^{\bm{Q}}_{1\leftarrow j}(u) \\
    \vdots \\
    S^{\bm{Q}}_{d\leftarrow j}(u)
    \end{bmatrix},\:\:\:
    \
    \bm{S}^{\bm{\lambda}}_j(u) := \begin{bmatrix}
    S^{\bm{\lambda}}_{1\leftarrow j}(u) \\
    \vdots \\
    S^{\bm{\lambda}}_{d\leftarrow j}(u)
    \end{bmatrix}.
\end{align}
%For each $i\in[d]$, the entries are defined as follows.
%Recall that $S^{\bm{N}}_{i\leftarrow j}(u)$ counts the number of events generated into dimension $i$ up to and including time $u$ with oldest ancestor in dimension $j$.
Here, $S^{\bm{Q}}_{i\leftarrow j}(u)$ equals $S^{\bm{N}}_{i\leftarrow j}(u)$ minus the departures in component $i$ up to and including time $u$.
Furthermore, $S^{\bm{\lambda}}_{i\leftarrow j}(u)$ records the aggregated change in the intensity of component $i$
caused by the jump sizes that are distributed as $B_{ij}$ and the decays $g_{ij}(\cdot)$ following events in the cluster $\bm{S}_j^{\bm{N}}(u)$ strictly prior to time $u$.

To fully appreciate these clusters and their entries,
we explore in the next section the distributional properties that occur as a consequence of the branching structure that underlies the different cluster processes.

\begin{remark}
\label{rem: focus on joint (Q,L) process}
Most of the results in this paper pertain to the joint process $(\bm{Q}(\cdot),\bm{\lambda}(\cdot))$,
which includes $(\bm{N}(\cdot),\bm{\lambda}(\cdot))$ as a special case when $J_{j}\equiv\infty$ for all $j\in[d]$.
We remark that it is possible to extend our results to cover the joint process $(\bm{N}(\cdot),\bm{Q}(\cdot),\bm{\lambda}(\cdot))$
at the cost of heavier notation.
\end{remark}

\section{Joint Transforms}\label{sec:JointTransforms}

In this section, by exploiting the branching structure underlying the cluster representation,
we first derive a collection of distributional equalities that play a key role
in next characterizing a general joint transform of the random object $(\bm{Q}(t),\bm{\lambda}(t))$, for any $t\in\rr$,
in terms of a semi-closed-form expression.
The generality of this characterization also allows us to obtain, as corollaries in specific cases,
several additional new transform results that are of independent interest.

%\begin{itemize}
%    \item Introduce joint transform as object of study this chapter - here or in 3.2?
%    \item Remark on generality and contributions: new already when $d\equiv 1$, $s\equiv 0$, $z\equiv 1$, $\mu\equiv 0$: Two Corollaries.
%\end{itemize}

\subsection{Analyzing distributional equalities}\label{sec:disequal}

The cluster processes appearing in the cluster representation describe
how the offspring events due to self- and cross-excitation are generated.
To illustrate how these cluster processes behave, Figure \ref{fig: bivariate sample path cluster processes} displays
a realization of $\bm{N}(\cdot)$ and $\bm{\lambda}(\cdot)$
with corresponding clusters $\bm{S}_j^{\bm{N}}(\cdot)$ and $\bm{S}_j^{\bm{\lambda}}(\cdot)$
in the bivariate case $d=2$.
In the upper two subplots, the dotted arrows between events (crosses and diamonds)
indicate how events are generated across time and components,
revealing the branching structure of the cluster processes.
In addition, we plot the cluster processes $S_{i\leftarrow j}^{\bm{N}}(\cdot)$ and $S_{i\leftarrow j}^{\bm{\lambda}}(\cdot)$, for $i,j=1,2$,
so as to make visible how they relate to the processes $N_i(\cdot)$ and $\lambda_i(\cdot)$.

\begin{figure}
%\hspace{-2.5cm}
\includegraphics[scale=.63]{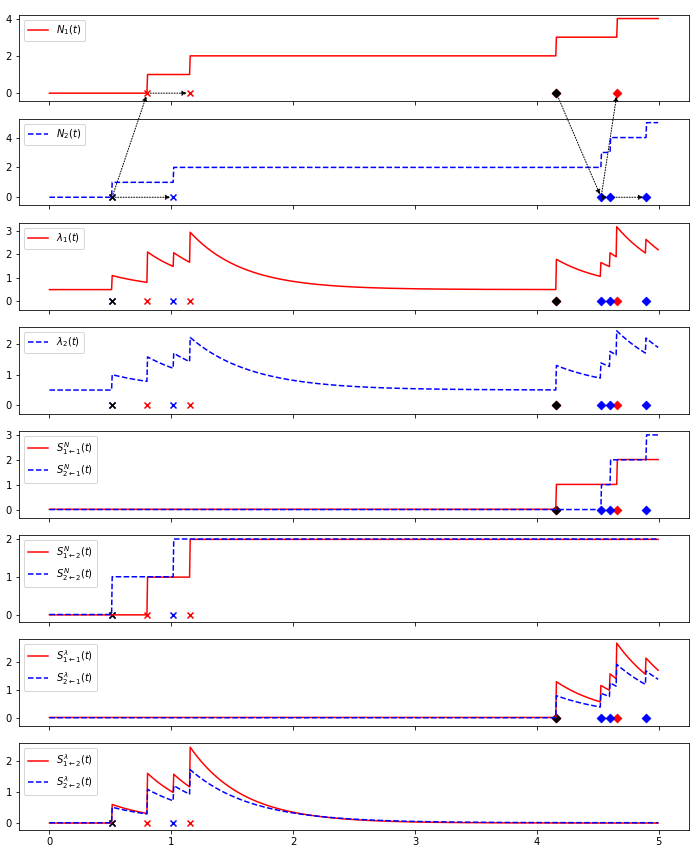} \\
\caption{\label{fig: bivariate sample path cluster processes}\small \textit{Sample paths of:
$\bm{N}(\cdot)=(N_1(\cdot),N_2(\cdot))$;
$\bm{\lambda}(\cdot)=(\lambda_1(\cdot),\lambda_2(\cdot))$;
the cluster processes originating in component 1 (diamonds),
$\bm{S}^{\bm{N}}_1(\cdot) = (S^{\bm{N}}_{1\leftarrow 1}(\cdot),S^{\bm{N}}_{2\leftarrow 1}(\cdot))$ and $\bm{S}^{\bm{\lambda}}_1(\cdot) = (S^{\bm{\lambda}}_{1\leftarrow 1}(\cdot), S^{\bm{\lambda}}_{2\leftarrow 1}(\cdot))$; and the cluster processes originating in component 2 (crosses),
$\bm{S}^{\bm{N}}_2(\cdot) = (S^{\bm{N}}_{1\leftarrow 2}(\cdot),S^{\bm{N}}_{2\leftarrow 2}(\cdot))$ and $\bm{S}^{\bm{\lambda}}_2(\cdot) = (S^{\bm{\lambda}}_{1\leftarrow 2}(\cdot), S^{\bm{\lambda}}_{2\leftarrow 2}(\cdot))$,
under exponential decay.
Parameters: $\lambda_{1,\infty} = \lambda_{2,\infty} = 0.5$, $\alpha_{11}=\alpha_{12} = 2.3$, $\alpha_{21} = \alpha_{22} = 2$, $B_{11} \equiv 1.3$, $B_{12} \equiv 0.6$, $B_{21} \equiv 0.8$, $B_{22} \equiv 0.5$.}}
\end{figure}

%In Figure \ref{fig: bivariate sample path cluster processes}, observe how the different entries of the clusters $\bm{S}^{\bm{N}}_1$ and $\bm{S}^{\bm{N}}_2$ end up in $N_1(t)$ and $N_2(t)$. As noted before, entries of the clusters $\bm{S}_1^{\bm{\lambda}}$ and $\bm{S}_2^{\bm{\lambda}}$ influence both intensities $\lambda_1(t)$ and $\lambda_2(t)$ directly.

As the visualization in Figure \ref{fig: bivariate sample path cluster processes} suggests,
the clusters defined in \eqref{eq: cluster N definition} and \eqref{eq: clusters Q,L definition} are formally connected
to the processes $N_i(\cdot)$, $Q_i(\cdot)$ and $\lambda_i(\cdot)$ via a set of distributional equalities.
More precisely, the equivalence between the cluster representation in Definition~\ref{def: hawkes cluster}
and the intensity-based Definition~\ref{def: hawkes intensity}
allows us to probabilistically describe events, and their impact on $N_i(\cdot)$, $Q_i(\cdot)$ and $\lambda_i(\cdot)$,
in terms of the clusters of immigrants and offspring.
Indeed, for given $t\in\rr$,
\begin{align}
\label{eq: distributional equalities Q_i N_i lambda_i}
    \begin{split}
        N_i(t) &\overset{\rm d}{=} \sum_{j=1}^d \sum_{k = 0}^{I_j(t)} S^{\bm{N}}_{i\leftarrow j}(t-T_{k}),\\
        Q_i(t) &\overset{\rm d}{=} \sum_{j=1}^d \sum_{k = 0}^{I_j(t)} S^{\bm{Q}}_{i\leftarrow j}(t-T_{k}), \\
        \lambda_i(t) &\overset{\rm d}{=} \lambda_{i,\infty} + \sum_{j=1}^d \sum_{k = 0}^{I_j(t)} S^{\bm{\lambda}}_{i\leftarrow j}(t-T_{k}),
    \end{split}
\end{align}
where $(T_{k})_{k}$ are the immigrant event times
and $I_j(\cdot)$ is as in Definition \ref{def: hawkes cluster}.
%Though we focus on component $i$, the multivariate setting forces us to consider immigrant events in all $j\in[d]$ components and consider the impact of all the offspring events.
The distributional equality concerning $\lambda_i(t)$ may be compared to Eqn.\ \eqref{eq: intensity lambda sum notation},
where we expressed $\lambda_i(t)$ as the pathwise aggregated change in intensity
due to all events strictly prior to time $t$.
We also briefly remark that we can express the relation between
the cluster entries of $\bm{S}_j^{\bm{N}}(\cdot)$ and $\bm{S}_j^{\bm{\lambda}}(\cdot)$,
at each time $u$,
by
\begin{align}
    S^{\bm{\lambda}}_{i\leftarrow j}(u) = \sum_{k=1}^d \sum_{r=1}^{S^{\bm{N}}_{k\leftarrow j}(u)} B_{ik,r}\,g_{ik}(u-T_r).
\end{align}

We now fix an immigrant event $(T^{(0)},j)$ in source component $j\in[d]$ and analyze the generated clusters $\bm{S}_j^\star(\cdot)$,
where $\star\in\{\bm{N},\bm{Q},\bm{\lambda}\}$.
To exploit the underlying branching structure,
and the self-similarity,
the idea consists in recognizing that this immigrant event generates first-generation events into all $m$ components,
and these in turn generate their own \textit{sub-clusters} $\bm{S}_m^\star(\cdot)$.
In order to formally capture this mechanism,
we define the matrix processes $\bm{S}^{\bm{N}}(\cdot)$, $\bm{S}^{\bm{Q}}(\cdot)$ and $\bm{S}^{\bm{\lambda}}(\cdot)$ by
\begin{align}
\label{eq: matrices S for N, Q, lambda def}
    \bm{S}^\star(\cdot) := \begin{bmatrix}
    \bm{S}_1^\star(\cdot) \ | \ \cdots \ | \ \bm{S}_d^\star(\cdot)
    \end{bmatrix}
    = \begin{bmatrix}
    S^\star_{1\leftarrow 1}(\cdot) & \cdots & S^\star_{1 \leftarrow d}(\cdot)\\
    \vdots & \ddots & \vdots\\
    S^\star_{d\leftarrow 1}(\cdot)& \cdots & S^\star_{d\leftarrow d}(\cdot)
    \end{bmatrix},
\end{align}
for $\star \in \{\bm{N},\bm{Q},\bm{\lambda}\}$.
%These matrix processes contain information about events across dimensions.
The columns $\bm{S}_j^\star(\cdot)$ correspond to the clusters defined in \eqref{eq: cluster N definition} and \eqref{eq: clusters Q,L definition}
and keep track of offspring events that \textit{originate} from component $j$,
while the rows, in the sequel denoted by $\bm{S}_{(i)}^\star(\cdot)$,
record offspring events that \textit{arrive into} component $i$.
Observe that the right-hand side expressions in \eqref{eq: distributional equalities Q_i N_i lambda_i} contain precisely the entries of the rows.

From Section~\ref{sec:ModelBranching} we know that the underlying branching structure is similar
for the clusters corresponding to $\bm{N}(\cdot),\bm{Q}(\cdot)$ and $\bm{\lambda}(\cdot)$.
This fact leads us to introduce unifying notation.
To this end, we define the functional $\cA_j$, $j\in[d]$, that acts on $\bm{X}(\cdot)=(X_1(\cdot),\ldots,X_d(\cdot))$,
a (row-)vector-valued process taking values in $\rr_+^d$,
and $P\geqslant 0$, for each time $u$ by
\begin{align}
\label{eq: def linear functional A_ij branching structure first gen}
    \cA_{j}\big\{P,\bm{X}(\cdot)\big\}(u) = P + \sum_{m=1}^d \sum_{k=1}^{K_{mj}(u)} X_m(u-T_{k}),
\end{align}
where $P$ accounts for the impact of the immigrant event and the terms in the summations account for the impact of offspring events,
with $K_{mj}(\cdot)$ as in Definition~\ref{def: hawkes cluster}.
Note the time shift to account for the arrival time $T_k$ of the offspring event.

The functional $\cA_{j}$ allows us to compactly and coherently formulate distributional equalities for the respective cluster processes.
%$\bm{S}_j^\star(\cdot)$,
%with $\star \in \{\bm{N},\bm{Q},\bm{\lambda}\}$.
%Instead of focusing on the entire cluster,
%we can %also
%be more detailed and
Indeed, zooming in on specific components $S_{i\leftarrow j}^\star(\cdot)$, with $\star \in \{\bm{N},\bm{Q},\bm{\lambda}\}$,
yields the micro-level distributional equalities at time $u$ given by
\begin{align}
\label{eq: distributional equality S_ij}
\begin{split}
    S^{\bm{N}}_{i \leftarrow j}(u) &\overset{\rm d}{=} \cA_j\big\{\bm{1}_{\{i=j\}}, \bm{S}_{(i)}^{\bm{N}}(\cdot)\big\}(u),\\
    S^{\bm{Q}}_{i \leftarrow j}(u) &\overset{\rm d}{=} \cA_j\big\{\bm{1}_{\{i=j\}}\bm{1}_{\{J_i>\,u\,\}}, \bm{S}_{(i)}^{\bm{Q}}(\cdot)\big\}(u),\\
    S^{\bm{\lambda}}_{i\leftarrow j}(u) &\overset{\rm d}{=} \cA_j\big\{ B_{ij}\,g_{ij}(u), \bm{S}_{(i)}^{\bm{\lambda}}(\cdot)\big\}(u),
\end{split}
\end{align}
which will prove to play a crucial role in the analysis of the cluster processes that follows (in Section~\ref{sec:FixedPoint}).
Note the difference in the first arguments for the different processes,
and note that the second argument $\bm{S}_{(i)}^\star(\cdot) = (S_{i\leftarrow 1}^\star(\cdot),\dots,S_{i\leftarrow d}^\star(\cdot))$
is the $i$-th row of the matrix $\bm{S}^\star(\cdot)$, which accounts for the offspring events.
The $B_{ij}$'s in the expression for $S_{i\leftarrow j}^{\bm{\lambda}}(\cdot)$ are understood to be sampled for each event in the cluster.
Intuitively, Eqn.\ \eqref{eq: distributional equality S_ij} says that the total impact of a cluster process
from source component $j$ on target component $i$
is equal in distribution to the superposition of first-generation events and the impact of their offspring.
The equality for $S_{i\leftarrow j}^{\bm{Q}}(\cdot)$ in \eqref{eq: distributional equality S_ij} is the multivariate counterpart of \cite[Eqn.\ (4.20)]{KSBM18},
in the sense that they coincide when setting $d=1$ in our setup.

\begin{remark}
The distributional equalities in \eqref{eq: distributional equality S_ij} can be extended
to the vectors $\bm{S}_j^\star(\cdot)$ for $\star\in\{\bm{N},\bm{Q},\bm{\lambda}\}$ using the mapping $\bm{\cA}_j$,
defined at time $u$ by
\begin{align}
\label{eq: def linear functional branching structure first gen}
    \bm{\cA}_j\big\{\bm{P},\bm{X}(\cdot)\}(u) = \bm{P} + \sum_{m=1}^d \sum_{k=1}^{K_{mj}(u)} \bm{X}_m(u-T_{k}).
\end{align}
Here, $\bm{P}\in\rr_+^d$ accounts for the immigrant event, $\bm{X}(\cdot)$ is an $\rr_+^{d\times d}$-valued matrix process,
and $\bm{X}_m(\cdot)$ is its $m$-th column vector.
For $\star\in\{\bm{N},\bm{Q},\bm{\lambda}\}$, one can substitute appropriate values $\bm{P}^\star(u)$ for $\bm{P}$
and use the matrix $\bm{S}^\star(\cdot)$ defined in \eqref{eq: matrices S for N, Q, lambda def}
to account for the offspring events,
to obtain the vector-valued versions of Eqn.\ \eqref{eq: distributional equality S_ij}.
Note that \eqref{eq: def linear functional branching structure first gen} describes the underlying branching structure of entire clusters $\bm{S}_j^\star(\cdot)$,
and that the entries of $\bm{\cA}_j$ correspond to $\cA_{1},\dots,\cA_d$.
\end{remark}

\subsection{Transform characterization}\label{sec:transformcha}

The distributional equalities are key to characterize a general joint transform of $(\bm{Q}(t),\bm{\lambda}(t))$.
We first make precise what we mean by joint transform.

\begin{definition}
\label{def: joint transform vectors}
Let $(\bm{X}(\cdot),\bm{Y}(\cdot))$ be a stochastic process taking values in $\nn_+^d \times \rr_+^d$.
For any $t\in\rr_+$, the joint transform of $(\bm{X}(t),\bm{Y}(t))$ is given by
\begin{align}
\label{eq: general def joint transform}
    \cJ_{\bm{X},\bm{Y}}(t)&\equiv
    \cJ_{\bm{X},\bm{Y}}(t,\bm{s},\bm{z}):= \ee\Big[  \bm{z}^{\bm{X}(t)}  e^{-\bm{s}^\top \bm{Y}(t)} \Big] \equiv
    \ee\Big[\prod_{i=1}^d  z_i^{X_i(t)}e^{-s_i Y_i(t)} \Big],
\end{align}
where $\bm{s}\in \rr_+^d$ and $\bm{z}\in[-1,1]^d$,
and we denote the space of such transforms by $\jj$, such that $\cJ_{\bm{X},\bm{Y}}(\cdot) \in \jj$.
Here, $\mathbb{E}\left[\cdot\right]$ is understood as $\mathbb{E}_{0}\left[\cdot\right]$,
i.e., expectation conditional upon the respective filtration at $t=0$.
\end{definition}
Note that Eqn.~\eqref{eq: general def joint transform} is well-defined, i.e., exists for any $t\in\rr_+$, $\bm{s}\in \rr_+^d$, and $\bm{z}\in[-1,1]^d$.
Throughout the paper, $\bm{s}\in \rr_+^d$ and $\bm{z}\in[-1,1]^d$ remain fixed, unless stated otherwise,
and are therefore sometimes suppressed in the notation for readability.
In our setting, we consider the joint transform of $(\bm{Q}(t),\bm{\lambda}(t))$,
with $\bm{Q}(0)=\bm{0}$ and $\bm{\lambda}(0)=\bm{\lambda}_{\infty}$,
given by
\begin{align}
\label{eq: joint transform Q,lambda}
    \cJ_{\bm{Q},\bm{\lambda}}(t) =  \ee\Big[\prod_{i=1}^d  z_i^{Q_i(t)} e^{-s_i \lambda_i(t)}\Big].
\end{align}

We proceed to show that we can obtain a semi-closed-form expression for $\cJ_{\bm{Q},\bm{\lambda}}(t)$
using the distributional properties derived in Section~\ref{sec:disequal}.
Specifically, we use Eqn.~\eqref{eq: distributional equalities Q_i N_i lambda_i}
to describe the entries $Q_i(\cdot)$ and $\lambda_i(\cdot)$ in terms of the respective cluster processes $\bm{S}_j^{\bm{Q}}(\cdot)$ and $\bm{S}_j^{\bm{\lambda}}(\cdot)$.
To that end, we also need to consider the joint transform of $(\bm{S}_j^{\bm{Q}}(u),\bm{S}_j^{\bm{\lambda}}(u))$,
with $\bm{S}_j^{\bm{Q}}(0) = \bm{e}_j$, the unit vector with $j$-th entry equal to $1$, and $\bm{S}_j^{\bm{\lambda}}(0) = \bm{B}^j$, given by
\begin{align}
\label{eq: joint transform clusters Q,lambda}
    \cJ_{\bm{S}_j^{\bm{Q}},\bm{S}_j^{\bm{\lambda}}}(u) =  \ee\Big[\prod_{i=1}^d z_i^{S_{i \leftarrow j}^{\bm{Q}}(u)}e^{-s_i S_{i \leftarrow j}^{\bm{\lambda}}(u)}\Big].
\end{align}

We can now state the first main result regarding the joint transform $\cJ_{\bm{Q},\bm{\lambda}}(t)$,
expressed in terms of $\cJ_{\bm{S}_j^{\bm{Q}},\bm{S}_j^{\bm{\lambda}}}(u)$ for $j\in[d]$ and $u\in[0,t]$;
later in this paper (in Section~\ref{sec:FixedPoint}) it is shown how the $\cJ_{\bm{S}_j^{\bm{Q}},\bm{S}_j^{\bm{\lambda}}}(u)$ can be identified.
By using the cluster representation, the independence of the cluster processes across components,
and exploiting the derived distributional equalities, we establish the following identity.

\begin{theorem}
\label{thm: joint transform zeta characterization}
The joint transform $\cJ_{\bm{Q},\bm{\lambda}}(t)$ satisfies
\begin{align}
\label{eq: joint transform lambda,queue}
\begin{split}
    \cJ_{\bm{Q},\bm{\lambda}}(t,\bm{s},\bm{z})
    &= \prod_{j=1}^d \exp\Big(-\lambda_{j,\infty}(t+s_j) + \lambda_{j,\infty}\int_0^t \cJ_{\bm{S}_j^{\bm{Q}},\bm{S}_j^{\bm{\lambda}}}(u,\bm{s},\bm{z})\mathrm{d}u\Big).
\end{split}
\end{align}
\end{theorem}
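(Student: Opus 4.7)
\medskip

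The plan is to substitute the distributional equalities \eqref{eq: distributional equalities Q_i N_i lambda_i} for $Q_i(t)$ and $\lambda_i(t)$ into the defining expression \eqref{eq: joint transform Q,lambda} for $\cJ_{\bm Q,\bm\lambda}(t)$, and then exploit Campbell's formula for the independent marked Poisson immigrant processes $I_j(\cdot)$. First I would pull out the deterministic factor $\prod_{i=1}^d e^{-s_i\lambda_{i,\infty}}=\prod_{j=1}^d e^{-s_j\lambda_{j,\infty}}$ coming from the base rates in $\lambda_i(t)$, so that what remains is the expectation of a product over all immigrant events $(T_k,j)$, with $j\in[d]$ and $k\in\{1,\ldots,I_j(t)\}$, of the random variables
\begin{equation*}
\prod_{i=1}^d z_i^{S_{i\leftarrow j}^{\bm Q,k}(t-T_k)}\,e^{-s_i S_{i\leftarrow j}^{\bm\lambda,k}(t-T_k)},
\end{equation*}
where the superscript $k$ indicates that a fresh, independent copy of the cluster from source component $j$ is drawn at each immigrant time.

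Next I would use the three structural properties of the cluster representation recalled just after \eqref{eq: clusters Q,L definition}: (i) immigrants in different source components $j$ are generated by independent Poisson processes $I_j(\cdot)$, (ii) the clusters $\bm S_j^\star(\cdot)$ are independent across $j$, and (iii) the clusters generated by different immigrants within the same source component are i.i.d.\ copies of $\bm S_j^\star(\cdot)$ (modulo a time shift). This lets me factor the expectation as a product over $j\in[d]$. For each $j$, I would view the immigrants in component $j$ as a Poisson point process on $[0,t]$ of rate $\lambda_{j,\infty}$ with independent marks $(\bm S_j^{\bm Q,k},\bm S_j^{\bm\lambda,k})$, and apply the Laplace functional of a marked Poisson process:
\begin{equation*}
\ee\Big[\prod_{k=1}^{I_j(t)}\prod_{i=1}^d z_i^{S_{i\leftarrow j}^{\bm Q,k}(t-T_k)}e^{-s_i S_{i\leftarrow j}^{\bm\lambda,k}(t-T_k)}\Big]
=\exp\Big(-\lambda_{j,\infty}\!\int_0^t\!\big(1-\cJ_{\bm S_j^{\bm Q},\bm S_j^{\bm\lambda}}(t-u)\big)\,\mathrm du\Big).
\end{equation*}
A change of variable $u\mapsto t-u$ inside the integral and multiplication by the earlier $e^{-s_j\lambda_{j,\infty}}$ factor yields precisely the $j$-th factor in \eqref{eq: joint transform lambda,queue}. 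As a self-contained alternative, one can avoid Campbell by conditioning on $I_j(t)=n$, using that the ordered immigrant times are distributed as the order statistics of $n$ uniform variables on $[0,t]$, computing the conditional expectation as $\bigl(\tfrac{1}{t}\int_0^t\cJ_{\bm S_j^{\bm Q},\bm S_j^{\bm\lambda}}(u)\,\mathrm du\bigr)^n$, and summing against the Poisson law; both routes give the same answer.

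The main obstacle, as I see it, is bookkeeping rather than analysis: one must be careful that the cluster copies attached to distinct immigrants are genuinely independent (rather than coupled through the driving Hawkes intensity) so that the marked-Poisson Laplace functional is legitimately applicable. This is exactly what the cluster representation of Definition \ref{def: hawkes cluster} guarantees, and it is also the reason the equality in \eqref{eq: distributional equalities Q_i N_i lambda_i} is only a distributional one, which is all that is needed here because $\cJ_{\bm Q,\bm\lambda}(t)$ is a law-determined functional. A secondary subtlety is ensuring measurability and interchange of $\ee$ with the (finite) product indexed by the Poisson points, for which dominated convergence applies since $|z_i|\leqslant 1$ and $s_i\geqslant 0$ make every term bounded by $1$. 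Once these points are verified, the displayed identity follows by assembling the $d$ factors.
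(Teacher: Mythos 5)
Your proposal is correct. Your primary route invokes the Laplace functional of a marked Poisson process (Campbell's formula) to evaluate the expectation over the immigrant points in each component, whereas the paper's proof derives the same exponential from scratch: it conditions on $\bm{I}(t)=\bm{n}$, uses that the immigrant arrival times are then i.i.d.\ uniform on $[0,t]$ to write the conditional expectation as $\bigl(\tfrac1t\int_0^t\cJ_{\bm S_j^{\bm Q},\bm S_j^{\bm\lambda}}(u)\,\mathrm du\bigr)^{n_j}$, and resums the Poisson series. These are the same computation packaged differently — indeed your ``self-contained alternative'' paragraph is verbatim the paper's argument — so the only real difference is that citing the marked-Poisson Laplace functional shortens the final summation step at the cost of appealing to an external formula. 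All the structural inputs you identify (the distributional equalities \eqref{eq: distributional equalities Q_i N_i lambda_i}, independence of immigrant processes and clusters across source components, i.i.d.\ cluster copies within a component, and the fact that the joint transform is law-determined so distributional equality suffices) are exactly the ones the paper relies on, and your boundedness remark ($|z_i|\leqslant 1$, $s_i\geqslant 0$) correctly disposes of the integrability issue.
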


\begin{proof}
We start by conditioning on the number of immigrants in each component, and use the fact that these arrive independently.
For brevity, we introduce the vectors $\bm{I}(t) = (I_1(t),\dots,I_d(t))^\top$ of immigrant processes
and $\bm{n} = (n_1,\dots,n_d)^\top  \in \nn_0^d$ (with $\nn_0:=\{0,1,2,\ldots\}$).
From the distributional equalities \eqref{eq: distributional equalities Q_i N_i lambda_i}, we obtain
\begin{align*}
    \cJ_{\bm{Q},\bm{\lambda}}(t,\bm{s},\bm{z})
    &= \sum_{\bm{n}\in {\mathbb N}_0^d} \ee\Big[ e^{-\bm{s}^\top \bm{\lambda}(t)} \bm{z}^{\bm{Q}(t)} \, \Big| \, \bm{I}(t) = \bm{n} \Big] \,\pp(\bm{I}(t) = \bm{n}) \\
    %\bigcap_{j=1}^d\{ I_j(t) =n_j\} \Big] \Big(   \\
    &= \sum_{\bm{n}\in {\mathbb N}_0^d}  \ee\Big[\prod_{j=1}^d e^{-s_j\lambda_{j,\infty}}\prod_{i=1}^d e^{- s_i \sum_{k =1}^{n_j} S^{\bm{\lambda}}_{i\leftarrow j}(t-T_{k})} z_i^{\sum_{k=1}^{n_j}S^{\bm{Q}}_{i\leftarrow j}(t-T_{k})} \Big]\, \pp(\bm{I}(t) = \bm{n}) \\
    &=\sum_{\bm{n}\in {\mathbb N}_0^d} \prod_{j=1}^d e^{-s_j\lambda_{j,\infty}} \ee\Big[  \prod_{i=1}^d e^{- s_i \sum_{k =1}^{n_j} S^{\bm{\lambda}}_{i\leftarrow j}(t-T_{k})} z_i^{\sum_{k=1}^{n_j} S^{\bm{Q}}_{i\leftarrow j}(t-T_{k})}\Big]\, \pp(\bm{I}(t) = \bm{n}),
%    &= \sum_{n_1=0}^\infty \cdots \sum_{n_d=0}^\infty \ee\Big[\prod_{j=1}^d %e^{-s_j\lambda_{j,\infty}}\prod_{i=1}^d e^{- s_i \sum_{k_j =1}^{n_j} %S^{\bm{\lambda}}_{i\leftarrow j}(t-T_{k_j})} z_i^{\sum_{k_j=1}^{n_j}S^{\bm{Q}}_{i\leftarrow %j}(t-T_{k_j})} \\
%    &\quad \Big| \bm{I}(t) = \bm{n} \Big] \P(\bm{I}(t) = \bm{n}) \\
%    &= \sum_{n_1=0}^\infty \cdots \sum_{n_d=0}^\infty \prod_{j=1}^d e^{-s_j\lambda_{j,\infty}} %\ee\Big[  \prod_{i=1}^d e^{- s_i \sum_{k_j =1}^{n_j} S^{\bm{\lambda}}_{i\leftarrow j}(t-T_{k_j})} %z_i^{\sum_{k_j=1}^{n_j} S^{\bm{Q}}_{i\leftarrow j}(t-T_{k_j})}\Big] \\
%    & \quad \cdot \P(\bm{I}(t) = \bm{n}). \\
\end{align*}
where we have used the independence between the clusters and immigrant processes in the second equality,
the independence among clusters in the last equality,
and write
\[\pp(\bm{I}(t) = \bm{n}) = \prod_{j=1}^d \pp(I_j(t)=n_j),\]
for brevity.
Recalling that each $I_j(t)$ is a Poisson process, we can use the property that conditional on the number of events at time $t$,
the event arrival times are i.i.d.\ according to a uniformly distributed random variable on the interval $[0,t]$.
With $T^{(j)}$ being uniformly distributed on $[0,t]$ (independent of anything else, that is), we thus have that,
for each $j\in[d]$, the sequence $(T_{k})_{k\in[n_j]}$ are i.i.d.\ copies of $T^{(j)}$.
This allows us to write
\begin{align*}
    \ee\Big[ \prod_{i=1}^d e^{- s_i \sum_{k =1}^{n_j} S^{\bm{\lambda}}_{i\leftarrow j}(t-T_{k})} z_i^{\sum_{k=1}^{n_j} S^{\bm{Q}}_{i\leftarrow j}(t-T_{k})}\Big]
    &=\Big(\ee\Big[ \prod_{i=1}^d e^{- s_i S^{\bm{\lambda}}_{i\leftarrow j}(t-T^{(j)})} z_i^{S^{\bm{Q}}_{i\leftarrow j}(t-T^{(j)})}\Big]\Big)^{n_j}
    \\&= \big(\cJ_{\bm{S}_j^{\bm{Q}},\bm{S}_j^{\bm{\lambda}}}(u-T^{(j)})\big)^{n_j},
\end{align*}
by the definition of $\cJ_{\bm{S}_j^{\bm{Q}},\bm{S}_j^{\bm{\lambda}}}$.
Now using that $T^{(j)}$ is uniformly distributed on $[0,t]$ and that $I_j(\cdot)$ are Poisson processes with rate $\lambda_{j,\infty}$,
we obtain that
\begin{align*}
   \cJ_{\bm{Q},\bm{\lambda}}(t)  &=\sum_{\bm{n}\in {\mathbb N}_0^d} \prod_{j=1}^d e^{-s_j\lambda_{j,\infty}}\Big( \frac{1}{t}\int_0^t \cJ_{\bm{S}_j^{\bm{Q}},\bm{S}_j^{\bm{\lambda}}}(t-u)\mathrm{d}u\Big)^{n_j} \frac{(\lambda_{j,\infty}t)^{n_j}}{n_j!}e^{-\lambda_{j,\infty}t} \\
    &=\sum_{\bm{n}\in {\mathbb N}_0^d} \prod_{j=1}^d e^{-\lambda_{j,\infty}(t+s_j)}\frac{\Big(\lambda_{j,\infty} \int_0^t \cJ_{\bm{S}_j^{\bm{Q}},\bm{S}_j^{\bm{\lambda}}}(u)\mathrm{d}u\Big)^{n_j}}{n_j!}\\
    &=\prod_{j=1}^d \exp\Big(-\lambda_{j,\infty}(t+s_j) + \lambda_{j,\infty}\int_0^t \cJ_{\bm{S}_j^{\bm{Q}},\bm{S}_j^{\bm{\lambda}}}(u))\mathrm{d}u\Big),
\end{align*}
where the second equality holds by an elementary change of variables.
\end{proof}

We proceed by presenting a number of corollaries of Theorem~\ref{thm: joint transform zeta characterization},
which are already new to the literature in their own right.
The first one considers the univariate case.

\begin{corollary}
\label{cor: joint transform univariate}
In case $d=1$, the joint transform $\cJ_{Q,\lambda}(t)$ of $(Q(t),\lambda(t))$ is given by
\begin{align}
\label{eq: corollary joint transform univariate}
    \ee\Big[ z^{Q(t)}e^{-s\lambda(t)}\Big]
    &= \exp\Big(-\lambda_{\infty}(t+s) + \lambda_{\infty}\int_0^t \cJ_{S^{\bm{Q}},S^{\bm{\lambda}}}(u,s,z)\,\mathrm{d}u\Big),
\end{align}
with $s\in\rr_+$ and $z\in[-1,1]$ and
where $\cJ_{S^{\bm{Q}},S^{\bm{\lambda}}}(u,s,z)$ is defined in the obvious manner.
\end{corollary}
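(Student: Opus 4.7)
The plan is to obtain the corollary as a direct specialization of Theorem~\ref{thm: joint transform zeta characterization} to the case $d=1$, so no additional argument beyond bookkeeping is required. First, I would observe that when $d=1$, the indices $i,j$ can only take the value $1$, so the vectors $\bm{s}=(s)$ and $\bm{z}=(z)$ in Definition~\ref{def: joint transform vectors} reduce to scalars, and the joint transforms $\cJ_{\bm{Q},\bm{\lambda}}(t,\bm{s},\bm{z})$ and $\cJ_{\bm{S}_j^{\bm{Q}},\bm{S}_j^{\bm{\lambda}}}(u,\bm{s},\bm{z})$ reduce to the univariate transforms $\ee[z^{Q(t)}e^{-s\lambda(t)}]$ and $\cJ_{S^{\bm{Q}},S^{\bm{\lambda}}}(u,s,z)$, respectively.

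Second, I would substitute $d=1$ into Eqn.~\eqref{eq: joint transform lambda,queue}. The product $\prod_{j=1}^d$ collapses to a single factor ($j=1$), the base rate $\lambda_{1,\infty}$ becomes $\lambda_\infty$, and the single column $\bm{S}_1^{\bm{Q}}(\cdot)$, $\bm{S}_1^{\bm{\lambda}}(\cdot)$ of the matrix processes in \eqref{eq: matrices S for N, Q, lambda def} coincides with the univariate cluster processes $S^{\bm{Q}}(\cdot)$, $S^{\bm{\lambda}}(\cdot)$ generated by a single immigrant event. This immediately yields the displayed identity \eqref{eq: corollary joint transform univariate}.

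The main (and in fact only) obstacle is notational: one must verify that the univariate cluster processes referred to implicitly in the statement of the corollary indeed correspond to the $d=1$ instances of the cluster processes $\bm{S}_j^{\bm{Q}}(\cdot)$, $\bm{S}_j^{\bm{\lambda}}(\cdot)$ defined in \eqref{eq: cluster N definition} and \eqref{eq: clusters Q,L definition}. Since Definition~\ref{def: hawkes cluster} is stated for arbitrary $d\in{\mathbb N}$, this correspondence is immediate, and hence the corollary follows without further work. A one-line proof referring back to Theorem~\ref{thm: joint transform zeta characterization} specialized at $d=1$ is all that is needed.
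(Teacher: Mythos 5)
Your proposal is correct and matches the paper's (implicit) argument exactly: the corollary is stated as an immediate specialization of Theorem~\ref{thm: joint transform zeta characterization} to $d=1$, with no proof given beyond the identification of the univariate objects with the $d=1$ instances of the multivariate ones. Nothing further is needed.
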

%It is remarked that Eqn.\ (\ref{eq: corollary joint transform univariate}) is a novel result in its own right.
By taking $s=0$ in Corollary~\ref{cor: joint transform univariate} one recovers the probability generating function of $Q(t)$
that was established in \cite{KSBM18}. %\cite[Section 4]{KSBM18}.
The second corollary considers $\bm{Q}(t)$ and $\bm{\lambda}(t)$ separately.

\begin{corollary}
\label{cor: joint transform multivariate}
By taking $s_i = 0$ for all $i\in[d]$ in \eqref{eq: joint transform lambda,queue},
we obtain the probability generating function of $\bm{Q}(t)$:
\begin{align}
\label{eq: corollary pgf queue}
    \ee\Big[ \bm{z}^{\bm{Q}(t)}\Big]
    &= \prod_{j=1}^d \exp\Big( -\lambda_{j,\infty}t + \lambda_{j,\infty}\int_0^t \ee\Big[ \prod_{i=1}^d z_i^{S^{\bm{Q}}_{i\leftarrow j}(u)}\Big] \mathrm{d}u\Big).
%    &= \prod_{j=1}^d \exp\Big( -\lambda_{j,\infty}t + \lambda_{j,\infty}\int_0^t \cJ_{\bm{S}_j^{\bm{Q}},\bm{S}_j^{\bm{\lambda}}}(u,\bm{0},\bm{z}) \mathrm{d}u\Big).
%    \eta_j(u,\bm{0},\bm{z}) &=\ee\Big[ \prod_{i=1}^d z_i^{S^{\bm{Q}}_{i\leftarrow j}(u)}\Big], \notag
\end{align}
%where $\bm{0} = (0,\dots,0)\in\rr^d$.
By taking $z_i =1$ for all $i\in[d]$ in \eqref{eq: joint transform lambda,queue},
we obtain the Laplace-Stieltjes transform of $\bm{\lambda}(t)$:
\begin{align}
\label{eq: corollary Laplace-Stieltjes transform lambda}
    \ee\Big[ e^{-\bm{s}^\top \bm{\lambda}(t)}\Big]
    &= \prod_{j=1}^d \exp\Big( -\lambda_{j,\infty}(t+s_j) + \lambda_{j,\infty}
    \int_0^t \ee\Big[\prod_{i=1}^d e^{- s_i S^{\bm{\lambda}}_{i\leftarrow j}(u)}\Big] \mathrm{d}u\Big).
%    &= \prod_{j=1}^d \exp\Big( -\lambda_{j,\infty}(t+s_j) + \lambda_{j,\infty}
%    \int_0^t \cJ_{\bm{S}_j^{\bm{Q}},\bm{S}_j^{\bm{\lambda}}}(u,\bm{s},\bm{1}) \mathrm{d}u\Big).
%    \eta_j(u,\bm{s},\bm{1}) &= \ee\Big[\prod_{i=1}^d e^{- s_i S^{\bm{\lambda}}_{i\leftarrow j}(u)}\Big], \notag
\end{align}
%where $\bm{1} = (1,\dots,1)\in\rr^d$.
\end{corollary}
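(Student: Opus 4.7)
The plan is to derive both identities as direct specializations of the joint transform identity in Theorem~\ref{thm: joint transform zeta characterization}, since the substitutions $s_i=0$ and $z_i=1$ act cleanly on both the outer exponential factor and the inner cluster joint transform $\cJ_{\bm{S}_j^{\bm{Q}},\bm{S}_j^{\bm{\lambda}}}(u,\bm{s},\bm{z})$.

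For the first identity, I would set $\bm{s}=\bm{0}$ in \eqref{eq: joint transform lambda,queue}. On the outer exponential, the term $-\lambda_{j,\infty}(t+s_j)$ reduces to $-\lambda_{j,\infty}t$. For the integrand, recalling the definition \eqref{eq: joint transform clusters Q,lambda} of $\cJ_{\bm{S}_j^{\bm{Q}},\bm{S}_j^{\bm{\lambda}}}(u,\bm{s},\bm{z})$, the factor $\prod_{i=1}^d e^{-s_i S^{\bm{\lambda}}_{i\leftarrow j}(u)}$ becomes identically $1$, so the integrand collapses to $\ee\big[\prod_{i=1}^d z_i^{S^{\bm{Q}}_{i\leftarrow j}(u)}\big]$. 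Since on the left-hand side $\ee[\bm{z}^{\bm{Q}(t)}e^{-\bm{s}^\top\bm{\lambda}(t)}]$ similarly reduces to $\ee[\bm{z}^{\bm{Q}(t)}]$, the identity \eqref{eq: corollary pgf queue} follows.

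For the second identity, I would symmetrically set $\bm{z}=\bm{1}$. Now the outer exponential factor is left untouched (both $t$ and $s_j$ remain), while $\prod_{i=1}^d z_i^{S^{\bm{Q}}_{i\leftarrow j}(u)}$ collapses to $1$ inside $\cJ_{\bm{S}_j^{\bm{Q}},\bm{S}_j^{\bm{\lambda}}}(u,\bm{s},\bm{z})$, so the integrand becomes $\ee\big[\prod_{i=1}^d e^{-s_i S^{\bm{\lambda}}_{i\leftarrow j}(u)}\big]$. Matching this against the left-hand side $\ee[\bm{z}^{\bm{Q}(t)}e^{-\bm{s}^\top\bm{\lambda}(t)}]$, which now reduces to $\ee[e^{-\bm{s}^\top\bm{\lambda}(t)}]$, yields \eqref{eq: corollary Laplace-Stieltjes transform lambda}.

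There is essentially no obstacle here: both identities are notational specializations of a result whose substantive content is already contained in Theorem~\ref{thm: joint transform zeta characterization}. The only minor point requiring care is ensuring that the admissible domains $\bm{s}\in\rr_+^d$ and $\bm{z}\in[-1,1]^d$ indicated in Definition~\ref{def: joint transform vectors} include the boundary choices $\bm{s}=\bm{0}$ and $\bm{z}=\bm{1}$, which they do, so the interchange of expectation with the substitution is justified without invoking any additional dominated convergence argument.
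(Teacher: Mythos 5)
Your proposal is correct and coincides with the paper's own (implicit) argument: the corollary is stated precisely as the specialization of Theorem~\ref{thm: joint transform zeta characterization} at $\bm{s}=\bm{0}$, respectively $\bm{z}=\bm{1}$, and your substitutions into both the outer exponential and the cluster transform $\cJ_{\bm{S}_j^{\bm{Q}},\bm{S}_j^{\bm{\lambda}}}$ are exactly what is intended. Nothing further is needed.
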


As noticed before, choosing $J_j\equiv \infty$ for all $j\in[d]$ implies that $\bm{Q}(\cdot) = \bm{N}(\cdot)$.
The following corollary is based on that observation.
\begin{corollary}
The joint transform $\cJ_{\bm{N},\bm{\lambda}}(t)$ of $(\bm{N}(t),\bm{\lambda}(t))$ is given by
\begin{align}
\label{eq: corollary joint transform Hawkes}
    \cJ_{\bm{N},\bm{\lambda}}(t)
    &=\prod_{j=1}^d \exp\Big(-\lambda_{j,\infty}(t+s_j) + \lambda_{j,\infty}\int_0^t \cJ_{\bm{S}_j^{\bm{N}},\bm{S}_j^{\bm{\lambda}}}(u,\bm{s},\bm{z})\,\mathrm{d}u\Big).
    \end{align}
%    with
%    \begin{align}
%    \eta^{\bm{N}}_j(u,\bm{s},\bm{z}) &:= \ee\Big[ \prod_{i=1}^d e^{- s_i S^{\bm{\lambda}}_{i\leftarrow %j}(u)} z_i^{S^{\bm{N}}_{i\leftarrow j}(u)}\Big].\notag
%\end{align}
\end{corollary}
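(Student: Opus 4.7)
The proof is essentially a specialization of Theorem~\ref{thm: joint transform zeta characterization} and requires only verifying that, under the assumption $J_j\equiv\infty$ for all $j\in[d]$, one may substitute $\bm{N}$ for $\bm{Q}$ everywhere in the identity \eqref{eq: joint transform lambda,queue}. My plan is to do exactly this substitution, supported by two pathwise observations.

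First, I would invoke the definition of the population process $\bm{Q}(t) = \bm{N}(t) - \bm{D}(t)$ from \eqref{eq: definition queue arrival-depart}. When $J_j\equiv\infty$, no arrival in component $j$ ever departs, so $D_j(t)=0$ almost surely for every $t\in\mathbb{R}$ and every $j\in[d]$. Therefore $\bm{Q}(\cdot)=\bm{N}(\cdot)$ pathwise, and consequently $\cJ_{\bm{Q},\bm{\lambda}}(t) = \cJ_{\bm{N},\bm{\lambda}}(t)$ for every admissible $(\bm s,\bm z)$.

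Second, I would verify the analogous pathwise identity at the level of the cluster processes. By construction (compare the definitions in \eqref{eq: cluster N definition} and \eqref{eq: clusters Q,L definition}, and recall that $S^{\bm{Q}}_{i\leftarrow j}(u)$ equals $S^{\bm{N}}_{i\leftarrow j}(u)$ minus the number of departures in component $i$ within that cluster up to time $u$), the vanishing of departures implies $\bm{S}^{\bm{Q}}_j(u)=\bm{S}^{\bm{N}}_j(u)$ for all $u\geqslant 0$ and every $j\in[d]$. Hence
\[
\cJ_{\bm{S}_j^{\bm{Q}},\bm{S}_j^{\bm{\lambda}}}(u,\bm s,\bm z) \;=\; \cJ_{\bm{S}_j^{\bm{N}},\bm{S}_j^{\bm{\lambda}}}(u,\bm s,\bm z),\qquad u\in[0,t],\;j\in[d].
\]
Plugging both equalities into Theorem~\ref{thm: joint transform zeta characterization} gives \eqref{eq: corollary joint transform Hawkes} immediately.

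There is essentially no obstacle here: the only subtlety is being careful that the distributional equalities in \eqref{eq: distributional equalities Q_i N_i lambda_i} and in \eqref{eq: distributional equality S_ij} continue to hold in the degenerate regime $J_j\equiv\infty$, where the indicator $\bm{1}_{\{J_i>u\}}$ reduces to $1$ and \eqref{eq: distributional equality S_ij} for $S^{\bm{Q}}_{i\leftarrow j}$ collapses to the one for $S^{\bm{N}}_{i\leftarrow j}$. This is immediate from the cluster representation in Definition~\ref{def: hawkes cluster}, so the corollary follows directly without repeating the conditioning-on-immigrants argument from the proof of Theorem~\ref{thm: joint transform zeta characterization}. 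Alternatively, one could rerun that proof verbatim with $\bm{Q}$ replaced by $\bm{N}$ and $S^{\bm{Q}}_{i\leftarrow j}$ replaced by $S^{\bm{N}}_{i\leftarrow j}$; both routes yield the same conclusion.
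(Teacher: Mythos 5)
Your proposal is correct and matches the paper's (implicit) argument: the paper presents this corollary as an immediate consequence of Theorem~\ref{thm: joint transform zeta characterization} after observing that $J_j\equiv\infty$ for all $j$ forces $\bm{D}(\cdot)\equiv 0$, hence $\bm{Q}(\cdot)=\bm{N}(\cdot)$ and $\bm{S}^{\bm{Q}}_j(\cdot)=\bm{S}^{\bm{N}}_j(\cdot)$. Your additional verification at the cluster level is a harmless elaboration of the same specialization.
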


In multivariate time-series analysis, where multivariate point processes play an important role, one often wants to compute auto- and cross-covariances,
involving expressions of the form $\ee[Q_i(t)Q_j(t+\tau)]$ for a combination of $i,j\in[d]$ and where $\tau>0$,
which in turn enable one to compute the respective auto- and cross-correlation functions.
%In fact, Hawkes' seminal paper \cite{H71} studies the covariance density of a class of point processes, later dubbed as Hawkes processes.
Indeed, these functions are central objects in the identification and statistical inference of multivariate Hawkes processes; see e.g., \cite{ACL15} in the Markovian case.
In the next proposition, we provide a characterization of the probability generating function of $\bm{Q}(\cdot)$ associated with different time points, by extending Theorem~\ref{thm: joint transform zeta characterization}.
We note that this characterization may be extended further to include $\bm{\lambda}(\cdot)$ as well as to cover any finite number of time points.
\begin{proposition}
For $\bm{y},\bm{z} \in [-1,1]^d$ and $\tau > 0$, we have that
\begin{align}
\begin{split}
    \ee\Big[\prod_{i=1}^d y_i^{Q_i(t)} z_i^{Q_i(t+\tau)}\Big]
%    = \prod_{j=1}^d &\exp\Big( \lambda_{j,\infty}\int_0^t \big(\cJ_{\bm{S}_j^{\bm{Q}},\bm{S}_j^{\bm{\lambda}}}(u,\bm{0},\bm{y}\bm{z}) -1\big)\mathrm{d}u\Big) \\
%    &\times \exp\Big( \lambda_{j,\infty}\int_t^{t+s} \big(\cJ_{\bm{S}_j^{\bm{Q}},\bm{S}_j^{\bm{\lambda}}}(u,\bm{0},\bm{z}) -1\big)\mathrm{d}u\Big),
    = &\prod_{j=1}^d \exp\Big( \lambda_{j,\infty}\int_0^t \big(\ee\Big[ \prod_{i=1}^d (y_iz_i)^{S_{i\leftarrow j}^{\bm{Q}}(u)}\Big] -1\big)\mathrm{d}u\Big) \\
    &\times \exp\Big( \lambda_{j,\infty}\int_t^{t+\tau} \big(\ee\Big[ \prod_{i=1}^d z_i^{S_{i\leftarrow j}^{\bm{Q}}(u)}\Big] -1\big)\mathrm{d}u\Big).
\end{split}
\end{align}
\end{proposition}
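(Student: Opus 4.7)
The proof strategy extends that of Theorem~\ref{thm: joint transform zeta characterization} to two time points by conditioning on the immigrant arrivals, splitting them into those occurring in $[0,t]$ and those in $(t,t+\tau]$, and exploiting the independence of the cluster processes across immigrants and components.

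First, using the cluster representation together with the distributional equalities in \eqref{eq: distributional equalities Q_i N_i lambda_i}, I would write
\begin{align*}
Q_i(t) &\,\overset{\rm d}{=}\, \sum_{j=1}^d \sum_{k:\,T_{k}\leqslant t} S^{\bm{Q},(k)}_{i\leftarrow j}(t-T_{k}),\\
Q_i(t+\tau) &\,\overset{\rm d}{=}\, \sum_{j=1}^d \sum_{k:\,T_{k}\leqslant t+\tau} S^{\bm{Q},(k)}_{i\leftarrow j}(t+\tau-T_{k}),
\end{align*}
where the $S^{\bm{Q},(k)}$ are independent copies of $S^{\bm{Q}}$. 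Since the immigrants $\bm{I}(\cdot)$ are independent Poisson processes, the counts $\bm{I}(t)$ and $\bm{I}(t+\tau)-\bm{I}(t)$ are independent, and conditional on these counts the arrival times are i.i.d.\ uniform on the respective subinterval. Conditioning on $\bm{I}(t)=\bm{n}$ and $\bm{I}(t+\tau)-\bm{I}(t)=\bm{m}$ and invoking the independence of clusters, the joint transform factorizes over immigrants and components, exactly mirroring the opening chain of equalities in the proof of Theorem~\ref{thm: joint transform zeta characterization}.

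Second, the $m_j$ immigrants that fall in $(t,t+\tau]$ contribute only to $\bm{Q}(t+\tau)$. For such an immigrant with arrival time $T_{k}\sim U(t,t+\tau]$, the per-immigrant transform equals $\tau^{-1}\int_{t}^{t+\tau}\ee[\prod_{i=1}^{d} z_i^{S^{\bm{Q}}_{i\leftarrow j}(t+\tau-T_{k})}]\,\mathrm{d}T_{k}$, which by the change of variable $u=t+\tau-T_{k}$ (or, keeping the $[t,t+\tau]$ parametrization as displayed in the proposition) yields the integrand $\ee[\prod_{i=1}^{d} z_i^{S^{\bm{Q}}_{i\leftarrow j}(u)}]$ on the interval $[t,t+\tau]$. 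Applying the Poisson-compounding identity $\sum_{m}\tfrac{\mu^{m}}{m!}e^{-\mu}x^{m}=e^{\mu(x-1)}$ then produces the second exponential factor, exactly as in the single-time-point case.

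Third, for the $n_j$ immigrants in $[0,t]$, each contributes to \emph{both} $\bm{Q}(t)$ and $\bm{Q}(t+\tau)$ through the same underlying cluster evaluated at two distinct elapsed times. Averaging $T_{k}\sim U[0,t]$ and invoking the same Poisson-compounding identity reduces matters to evaluating the joint per-immigrant transform $\ee\big[\prod_{i=1}^{d} y_i^{S^{\bm{Q}}_{i\leftarrow j}(t-T_{k})}\,z_i^{S^{\bm{Q}}_{i\leftarrow j}(t+\tau-T_{k})}\big]$ and identifying it with the claimed integrand $\ee\big[\prod_{i=1}^{d} (y_iz_i)^{S^{\bm{Q}}_{i\leftarrow j}(u)}\big]$ at $u=t-T_{k}$.

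The main obstacle is precisely this identification step. To carry it out, the plan is to decompose each cluster $\bm{S}^{\bm{Q}}_j(\cdot)$ into its restriction to events arriving in $[0,t]$ and its restriction to events arriving in $(t,t+\tau]$; the former contributes the $y_iz_i$ mark (it is alive at both time points, up to the sojourn-time departure mechanism already built into $S^{\bm{Q}}$), while the latter contributes only the $z_i$ mark and, by the self-similarity and independence encoded in the branching mechanism of \eqref{eq: def linear functional A_ij branching structure first gen}, can be absorbed into the second exponential factor through the combined integrals over $[0,t]$ and $(t,t+\tau]$. Making this sub-cluster decomposition rigorous---and showing that the second-interval contributions generated by immigrants of the first interval combine cleanly with those of direct immigrants in $(t,t+\tau]$---is the key technical point that distinguishes this extension from the single-time-point argument.
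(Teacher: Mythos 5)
Your overall strategy coincides with the paper's: condition on the immigrant counts over $[0,t]$ and $(t,t+\tau]$, use that the arrival times are i.i.d.\ uniform on each subinterval, invoke the independence across clusters, and apply the Poisson compounding identity. The paper itself only sketches this argument. The difficulty is that the step you yourself single out as ``the main obstacle''---identifying the per-immigrant two-time transform $\ee\big[\prod_i y_i^{S^{\bm{Q}}_{i\leftarrow j}(t-T_k)}\, z_i^{S^{\bm{Q}}_{i\leftarrow j}(t+\tau-T_k)}\big]$ with $\ee\big[\prod_i (y_iz_i)^{S^{\bm{Q}}_{i\leftarrow j}(t-T_k)}\big]$ times a $y$-free remainder---is left open in your write-up, and the route you sketch for closing it does not go through. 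Two effects couple the $y$- and $z$-marks inside a single old cluster: (i) an individual counted in $S^{\bm{Q}}_{i\leftarrow j}(t-T_k)$ may depart during $(t,t+\tau]$, in which case it should carry the mark $y_i$ only, not $y_iz_i$; and (ii) offspring born during $(t,t+\tau]$ carry the mark $z_i$ only, but their number depends on the pre-$t$ history of that same cluster, so they cannot be detached from the $y$-dependent factor and absorbed into the second exponential, which contains no $y$. Neither effect is mere bookkeeping; each changes the answer.

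A concrete check: take $d=1$, $B\equiv 0$ and $y=z=0$. Then $Q(\cdot)$ is an $M/G/\infty$ queue, $S^{\bm{Q}}_{1\leftarrow 1}(u)=\bm{1}_{\{J>u\}}$, and the right-hand side of the proposition evaluates to $\exp\big(-\lambda_\infty\int_0^{t+\tau}\pp(J>u)\,\mathrm{d}u\big)=\pp(Q(t+\tau)=0)$, whereas the left-hand side is $\pp(Q(t)=0,\,Q(t+\tau)=0)$, which is strictly smaller whenever a customer can be present at $t$ and gone by $t+\tau$. So the identification you need is not merely unproven; it fails pointwise, and the discrepancy does not cancel in aggregate. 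A complete argument must instead keep the genuine two-time cluster transform $\ee\big[\prod_i y_i^{S^{\bm{Q}}_{i\leftarrow j}(u)} z_i^{S^{\bm{Q}}_{i\leftarrow j}(u+\tau)}\big]-1$ as the integrand over $[0,t]$ (with the new-immigrant contribution parametrized by elapsed time in $[0,\tau]$), or else justify why the corrections from (i) and (ii) vanish---which the example shows they do not in general. As written, your proposal reproduces the paper's sketch up to and including the point where the real work begins, and then stops; the gap is genuine.
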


\begin{proof}
The proof is similar to the proof of Theorem~\ref{thm: joint transform zeta characterization}.
We therefore omit the details and only explain the general structure of the proof.
Compared to Theorem~\ref{thm: joint transform zeta characterization}, one now has to condition twice: first one conditions on the number of immigrant events up to time $t$, and next, given the information up to time $t$, one re-conditions on the number of immigrant events up to time $t+\tau$.
By the independent increments property of the (immigrant) Poisson processes, the immigrant event arrival times for the respective conditioning events are uniformly distributed among the intervals $[0,t]$ and $[t,t+\tau]$.
Properly using the independence among the clusters similar to Theorem~\ref{thm: joint transform zeta characterization} and collecting terms, then yields the stated result.
\end{proof}

\begin{remark}
A related process, with ample applications in e.g.,\ insurance and risk,
is the multivariate compound Hawkes process, constructed as follows.
For each $i\in[d]$, let $(U_i^{(n)})_{n\in\nn}$ be a sequence of non-negative i.i.d.\ random variables independent of $\bm{N}(\cdot)$.
Define the multivariate compound Hawkes process $\bm{Z}(\cdot) := (Z_1(\cdot), \ldots, Z_d(\cdot))^{\top}$ entry-wise for $t\in\rr$ by
\begin{align}
\label{eq: compound hawkes entry}
    Z_i(t) := \sum_{n=1}^{N_i(t)} U_i^{(n)}.
\end{align}
For fixed $t\in\rr$, the Laplace-Stieltjes transform $\cT\{\bm{Z}(t)\}(\bm{s})$ of $\bm{Z}(t)$ satisfies
\begin{align}
\label{eq: compound hawkes characterization}
    \cT\{\bm{Z}(t)\}(\bm{s}) = \ee\big[e^{-\bm{s}^\top \bm{Z}(t)}\big]
    = \ee\Big[ \prod_{i=1}^d \big(\cT\{U_i\}(s_i)\big)^{N_i(t)} \Big],
\end{align}
where $\cT\{U_i\}(s) = \ee[e^{-sU_i}]$ is the Laplace-Stieltjes transform of $U_i$ evaluated in $s$.
Also observe that $\cT\{\bm{Z}(t)\}(\bm{s})$ can be expressed in terms of quantities discussed earlier in this section,
as the right-hand side of \eqref{eq: compound hawkes characterization} can be interpreted
as the probability generating function of $\bm{N}(t)$
evaluated in $\bm{z}=\cT\{\bm{U}\}(\bm{s})\equiv (\cT\{U_1\}(s_1),\ldots,\cT\{U_d\}(s_d))$.
In other words,
\begin{align}
    \cT\{\bm{Z}(t)\}(\bm{s})  = \cJ_{\bm{N},\bm{\lambda}}\big(t, \bm{0}, \cT\{\bm{U}\}(\bm{s})\big).
\end{align}
\end{remark}

%The compound Hawkes process can be a fruitful model in situations where the increments are not unit size and may even be random, e.g. energy in seismology, monetary value in finance, claim size in insurance.

\section{Fixed-Point Theorem}\label{sec:FixedPoint}

In the previous section, we expressed the joint transform $\cJ_{\bm{Q},\bm{\lambda}}(t)$  of $(\bm{Q}(t),\bm{\lambda}(t))$ in terms of
\begin{align}
\label{eq:J_cluster}
{\mathcal G}_j(u):=\cJ_{\bm{S}_j^{\bm{Q}},\bm{S}_j^{\bm{\lambda}}}(u)\equiv \cJ_{\bm{S}_j^{\bm{Q}},\bm{S}_j^{\bm{\lambda}}}(u,\bm{s},\bm{z});
\end{align}
see in particular Eqn.\ \eqref{eq: joint transform lambda,queue} in the characterization of the joint transform $\cJ_{\bm{Q},\bm{\lambda}}(t)$
that is given in Theorem \ref{thm: joint transform zeta characterization}.
In this section, we focus our analysis on \eqref{eq:J_cluster}.
More specifically, by employing the previously derived distributional equalities,
we characterize the joint transform \eqref{eq:J_cluster}
in terms of the fixed point of a certain mapping.
We also provide an explicit iteration scheme that, as we formally prove, converges to this fixed point.

\subsection{Spaces of joint transforms}

%We start by introducing a number of relevant spaces.
Recall from Definition \ref{def: joint transform vectors} the space $\jj$ of time-dependent joint transforms of $d$-dimensional vector-valued processes $(\bm{X}(\cdot),\bm{Y}(\cdot))$.
To handle the matrices $\bm{S}^\star(\cdot)$ for $\star \in \{\bm{N},\bm{Q},\bm{\lambda}\}$, we extend the space $\jj$ to include matrices, as follows.
%, in such a way that it respects the independence among columns, i.e., independence among the cluster processes $\bm{S}_j^\star(\cdot)$ for $j\in[d]$.
%We include the possibility of possibly defective distributions, meaning that the probabilities of the distribution may sum to less than $1$.

\begin{definition}
\label{def: time dependent joint transform space}
%Let $\jj$ be the space of all time dependent joint transforms of possibly defective processes, containing elements $\cJ_{\bm{X},\bm{Y}}(\cdot) \in \jj$ of the form given in Eqn.\ (\ref{eq: general def joint transform}).
Set $\jj^d$ to be the $d$-dimensional analogue of $\jj$, in the sense that an element $\bm{\cJ}_{\bm{X},\bm{Y}}(\cdot) \in \jj^d$ is given at time $u$ by
%Consider the pair of matrix processes $(\bm{X}(\cdot),\bm{Y}(\cdot)) = (X_{ij}(\cdot),Y_{ij}(\cdot))_{i,j\in[d]}$ taking values in $\nn^{d\times d} \times \rr_+^{d\times d}$. Define the joint transform of $(\bm{X}(\cdot),\bm{Y}(\cdot))$ at time $u$ by
\begin{align}
\label{eq: def joint transform d-dimensional}
    \bm{\cJ}_{\bm{X},\bm{Y}}(u)
    := \begin{bmatrix}
    \cJ_{\bm{X}_1,\bm{Y}_1}(u) \\
    \vdots \\
    \cJ_{\bm{X}_d,\bm{Y}_d}(u)
    \end{bmatrix},
\end{align}
where for each $j\in[d]$, the entry $\cJ_{\bm{X}_j,\bm{Y}_j}(\cdot)\in \jj$ is the joint transform
corresponding to  $(\bm{X}_j(u),\bm{Y}_j(u)) = ((X_{1j}(u),Y_{1j}(u)),\dots,(X_{dj}(u),Y_{dj}(u)))^\top$
as defined in Eqn.\ \eqref{eq: general def joint transform}.
\end{definition}
Note that an entry on the right-hand side of Eqn.\ (\ref{eq: def joint transform d-dimensional}) can be viewed as the joint transform of the columns of the
%pair of suitably defined
matrix-valued random object $(\bm{X}(u),\bm{Y}(u)) = (X_{ij}(u),Y_{ij}(u))_{i,j\in[d]}$.

When considering the processes $(\bm{S}^{\bm{Q}}(\cdot),\bm{S}^{\bm{\lambda}}(\cdot))$,
recall that for each $j\in[d]$ we have that the transform ${\mathcal G}_j(\cdot)$ is an element of $\jj$ by Eqn.\ (\ref{eq: joint transform clusters Q,lambda}).
The space $\jj^d$ plays an important role in the exploitation of the distributional equalities given in Eqn.\ (\ref{eq: distributional equality S_ij}).
Since in our general multivariate Hawkes model a cluster originating in source component $j$ can in principle
generate events in any of the components,
characterizing ${\mathcal G}_j(\cdot)$
%$\cJ_{\bm{S}_j^{\bm{Q}},\bm{S}_j^{\bm{\lambda}}}(\cdot)$
requires us to \textit{simultaneously} consider
${\mathcal G}_m(\cdot)$
%$\cJ_{\bm{S}_m^{\bm{Q}},\bm{S}_m^{\bm{\lambda}}}(\cdot)$
for {\it all}\, $m\in[d]$.
This explains why we work with the following vector of time-dependent joint transforms:
\begin{align}
\label{eq: joint transform vector S^Q,S^lambda}
{\boldsymbol {\mathcal G}}(u):=
    \bm{\cJ}_{\bm{S}^{\bm{Q}},\bm{S}^{\bm{\lambda}}}(u)
    &=
    \begin{bmatrix}
    \cJ_{\bm{S}_1^{\bm{Q}},\bm{S}_1^{\bm{\lambda}}}(u) \\
    \vdots \\
    \cJ_{\bm{S}_d^{\bm{Q}},\bm{S}_d^{\bm{\lambda}}}(u)
    \end{bmatrix}
    =
    \begin{bmatrix}
    {\mathcal G}_1(u) \\
    \vdots \\
    {\mathcal G}_d(u)
    \end{bmatrix}.
\end{align}
Here, as for any $j\in[d]$ the entry ${\mathcal G}_j(\cdot)$ is in
$\jj$ by Eqn.\ (\ref{eq: joint transform clusters Q,lambda}), we have that ${\boldsymbol {\mathcal G}}(\cdot) \in \jj^d$.
In words, $\bm{\cG}(u)$ is the vector containing the time-dependent joint transforms
corresponding to all pairs of cluster processes $(\bm{S}^{\bm{Q}}_j(u),\bm{S}^{\bm{\lambda}}_j(u))$ for $j\in[d]$,
i.e., the columns of $(\bm{S}^{\bm{Q}}(u),\bm{S}^{\bm{\lambda}}(u))$.
This informally entails that the object ${\boldsymbol {\mathcal G}}(u)$ contains a full probabilistic description of all underlying components.

Next, we state the following definition.
\begin{definition}
\label{def: phi definition}
Consider the mapping $\phi: \jj^d \to \jj^d$ which maps an element $\bm{\cJ} \equiv \bm{\cJ}_{\bm{X},\bm{Y}}(\cdot)  \in \jj^d$ to:
\begin{align}
\label{eq: phi vector definition}
    \bm{\cJ}(\cdot)=
    \begin{bmatrix}
    \cJ_1(\cdot)\\
    \vdots\\
    \cJ_d(\cdot)
    \end{bmatrix}
    \mapsto
    \begin{bmatrix}
    \phi_1(\cJ_1,\dots,\cJ_d)(\cdot)\\
    \vdots \\
    \phi_d(\cJ_1,\dots,\cJ_d)(\cdot)
    \end{bmatrix}
    =
    \begin{bmatrix}
    \phi_1(\bm{\cJ})(\cdot)\\
    \vdots \\
    \phi_d(\bm{\cJ})(\cdot)
    \end{bmatrix}
    = \phi(\bm{\cJ})(\cdot),
\end{align}
%defined for each $j\in[d]$ by
%\begin{align*}
%    \phi_j &: \big(\jj_t^{2d}\big)^d \to \jj_t^{2d}: (f_1,\dots,f_d) \mapsto \phi_j(f_1,\dots,f_d),
%\end{align*}
where each entry $\phi_j(\bm{\cJ})(\cdot)\in \jj$ is defined at time $u$ by
\begin{align}
\label{eq: phi_j definition mapping}
    \phi_j(\bm{\cJ})(u) &\equiv \phi_j(\cJ_1,\dots,\cJ_d)(u,\bm{s},\bm{z})\notag  \\
    \quad
    &:=\ee\Big[z_j^{\bm{1}_{\{J_j>u\}}}\Big] \prod_{i=1}^d\ee\Big[ \exp\big(-s_iB_{ij}g_{ij}(u)\big)\Big]  \\
    &\quad \times\prod_{m=1}^d \ee\Big[ \exp\Big( -B_{mj} \int_0^u g_{mj}(v)\big(1 - \cJ_m(u-v)\big)\,\mathrm{d}v\Big)\Big].\notag
\end{align}
\end{definition}

Note that, in Definition \ref{def: phi definition},
we suppressed the function notation $\bm{\cJ} \equiv \bm{\cJ}(\cdot)$ in the argument of $\phi$ for ease of readability,
allowing us to denote the image of $\bm{\cJ}(\cdot)$ as $\phi(\bm{\cJ})(\cdot)$.
It is not immediately clear that the mapping $\phi$ is well-defined, i.e.,
that for any $\bm{\cJ}_{\bm{X},\bm{Y}}(\cdot)\in\jj^d$, we have $\phi(\bm{\cJ}_{\bm{X},\bm{Y}})(\cdot) \in \jj^d$ as well.
%To show this is indeed the case, we give a probabilistic argument based on \cite{AW92} and explicitly construct the relevant process, by using the distributional equalities in Eqn.\ (\ref{eq: distributional equality S_ij}).
We can show that $\phi(\bm{\cJ}_{\bm{X},\bm{Y}})(\cdot) \in \jj^d$, as desired, by suitably modifying
the arguments used e.g., in the proof of \cite[Theorem 1]{AW92} to this more complex setting.

\begin{lemma}
\label{lem: phi well defined}
The mapping $\phi$ in Eqn.\ \eqref{eq: phi vector definition} is well-defined.
\end{lemma}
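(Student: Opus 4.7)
The plan is to exhibit, for each $j \in [d]$ and each fixed $u \in \rr_+$, an explicit $\nn_0^d \times \rr_+^d$-valued random pair whose joint transform (in the sense of Definition~\ref{def: joint transform vectors}) equals $\phi_j(\bm{\cJ})(u,\bm{s},\bm{z})$. Since $\bm{\cJ} = \bm{\cJ}_{\bm{X},\bm{Y}} \in \jj^d$, every entry $\cJ_m$ is, by hypothesis, the joint transform of an $\nn_0^d \times \rr_+^d$-valued pair $(\bm{X}_m(\cdot), \bm{Y}_m(\cdot))$, from which I can freely draw independent copies. The construction will mimic one generation of the cluster dynamics of Definition~\ref{def: hawkes cluster}, with these independent copies grafted as marks in place of genuine sub-cluster processes; summing the immigrant term and the first-generation offspring contributions will then produce the desired random pair, in the spirit of the argument for \cite[Theorem~1]{AW92}.

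Concretely, I would sample mutually independent ingredients: a sojourn time $J_j$; jump sizes $B_{ij}$ for $i\in[d]$ used for the immigrant's direct contribution to the intensity of component $i$; fresh independent samples $\tilde B_{mj}$ for $m\in[d]$ used as random parameters of the spawning processes; Poisson processes $K_{mj}$ on $[0,u]$ with conditional intensity $\tilde B_{mj}\,g_{mj}(\cdot)$; and, for every point $T_k$ of each $K_{mj}$, an independent copy $(\bm{X}_m^{(m,k)}, \bm{Y}_m^{(m,k)})$ distributed as $(\bm{X}_m(u-T_k), \bm{Y}_m(u-T_k))$. Setting
\begin{align*}
X'_i(u) &:= \bm{1}_{\{i=j\}}\,\bm{1}_{\{J_j > u\}} + \sum_{m=1}^d \sum_{k=1}^{K_{mj}(u)} X_{m,i}^{(m,k)}, \\
Y'_i(u) &:= B_{ij}\,g_{ij}(u) + \sum_{m=1}^d \sum_{k=1}^{K_{mj}(u)} Y_{m,i}^{(m,k)},
\end{align*}
the resulting pair $(\bm{X}'(u), \bm{Y}'(u))$ is $\nn_0^d \times \rr_+^d$-valued by construction. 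Computing $\ee[\prod_i z_i^{X'_i(u)}\,e^{-s_i Y'_i(u)}]$ and invoking mutual independence, the expectation factors into three blocks: the sojourn block $\ee[z_j^{\bm{1}_{\{J_j>u\}}}]$; the immigrant-jump block $\prod_i \ee[\exp(-s_i B_{ij}\,g_{ij}(u))]$; and, for each $m$, a cluster block that I evaluate by conditioning on $\tilde B_{mj}$ and applying the probability-generating-functional identity for a Poisson process with intensity $\tilde B_{mj}\,g_{mj}(v)\,\mathrm{d}v$ to the mark function $v \mapsto \cJ_m(u-v)$. This last step produces exactly $\ee[\exp(-\tilde B_{mj}\int_0^u g_{mj}(v)(1-\cJ_m(u-v))\,\mathrm{d}v)]$, so combining the three blocks reproduces the right-hand side of Eqn.~\eqref{eq: phi_j definition mapping}.

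The hard part is largely measure-theoretic bookkeeping rather than any deep probabilistic argument. I must verify that the nested random sums are a.s.\ finite, which follows because $K_{mj}(u)$ has conditional mean $\tilde B_{mj}\int_0^u g_{mj}(v)\,\mathrm{d}v$, finite a.s.\ by integrability of $g_{mj}$ together with the a.s.\ finiteness of the $B$'s recalled in Section~\ref{sec:ModelBranching}. I also need $\cJ_m(u-v)\in[-1,1]$ so that the inner exponent is well-defined and the p.g.f.\ formula applies cleanly; this is automatic since $\bm{z}\in[-1,1]^d$ and $\bm{s}\in\rr_+^d$ throughout. Finally, the joint marked-Poisson construction and the attendant Fubini-style interchange of expectations are standard (cf.\ \cite[Ch.~6]{DVJ07}). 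With these points in hand, the identity of transforms reduces to an essentially algebraic manipulation, and repeating the argument for every $j\in[d]$ yields $\phi(\bm{\cJ})(\cdot)\in\jj^d$, as required.
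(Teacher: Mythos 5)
Your proposal is correct and follows essentially the same route as the paper: you realize $\phi_j(\bm{\cJ})$ as the joint transform of an explicit one-generation cluster construction built from independent copies of random pairs whose transforms are the entries of $\bm{\cJ}$, which is exactly the paper's probabilistic analogue $\Phi$ defined via the functional $\cA_j$ (following the idea of \cite[Theorem~1]{AW92}). You merely spell out the marked-Poisson/p.g.fl.\ computation that the paper defers to ``mimicking the steps of the proof of Theorem~\ref{thm: fixed point theorem}.''
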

\begin{proof}
See Section \ref{section: proofs} in the Appendix.
\end{proof}

\subsection{Fixed point and convergence results}

In this subsection, we characterize  $\bm{\cJ}_{\bm{S}^{\bm{Q}},\bm{S}^{\bm{\lambda}}}(u)$ in terms of a fixed point involving the mapping $\phi$.
We then show that iterating the mapping $\phi$ leads us to a unique limit
(i.e., the value of $\bm{\cJ}_{\bm{S}^{\bm{Q}},\bm{S}^{\bm{\lambda}}}(u)$ that we are after).  %$\bm{\cJ}_{\bm{S}^{\bm{Q}},\bm{S}^{\bm{\lambda}}}(\cdot)$.
To facilitate the analysis, we need to define an appropriate notion of distance for the space $\jj^d$.
%We endow the space $\jj$ with the topology induced by the norm $\jjnorm{\cdot}$, which we define by
%\begin{align*}
%    \jjnorm{\cJ} := \sup_{\substack{u \in [0,t] \\\bm{s} \in\rr_+^d \\ \bm{z}\in[-1,1]^d}} %\rnorm{\cJ(u,\bm{s},\bm{z})} \equiv \sup_{u,\bm{s},\bm{z}} \rnorm{\cJ(u,\bm{s},\bm{z})},
%\end{align*}
%which turns $\jj$ into a metric space.
We endow the space $\jj^d$ with the topology induced by the norm $\jjdnorm{\cdot}$, defined by
\begin{align*}
    \jjdnorm{\bm{\cJ}} := \sup_{\substack{u\in[0,t] \\ \bm{s}\in\rr_+^d \\ \bm{z}\in[-1,1]^d}} \rdnorm{\bm{\cJ}(u,\bm{s},\bm{z})} \equiv \sup_{u,\bm{s},\bm{z}} \rdnorm{\bm{\cJ}(u,\bm{s},\bm{z})}.
\end{align*}
%which turns $\jj^d$ into a metric space.
The following result can be proven by suitably applying standard topological methods.

\begin{lemma}
\label{lem: phi continuous and bounds}
The mapping $\phi$ in Eqn.\ \eqref{eq: phi vector definition} is continuous with respect to the norm $\jjdnorm{\cdot}$.
\end{lemma}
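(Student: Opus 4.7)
The plan is to establish Lipschitz continuity of $\phi$ with respect to $\jjdnorm{\cdot}$, which immediately implies the desired continuity. The strategy is to isolate the $\bm{\cJ}$-dependent factors in $\phi_j$ and estimate them one at a time using elementary inequalities.

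First, I would rewrite $\phi_j(\bm{\cJ})(u,\bm{s},\bm{z}) = C_j(u,\bm{s},\bm{z})\,\prod_{m=1}^d F_{mj}(\cJ_m)(u,\bm{s},\bm{z})$, where the factor $C_j(u,\bm{s},\bm{z}) := \ee\bigl[z_j^{\bm{1}_{\{J_j>u\}}}\bigr]\prod_{i=1}^d\ee\bigl[e^{-s_iB_{ij}g_{ij}(u)}\bigr]$ is bounded in absolute value by $1$ and is independent of $\bm{\cJ}$, while $F_{mj}(\cJ_m)(u,\bm{s},\bm{z}):= \ee\bigl[\exp\bigl(-B_{mj}\int_0^u g_{mj}(v)(1-\cJ_m(u-v))\,\mathrm{d}v\bigr)\bigr]$ depends on $\bm{\cJ}$ only through its $m$-th entry. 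Since any $\cJ_m\in\jj$ takes values in $[-1,1]$ for $\bm{s}\in\rr_+^d$ and $\bm{z}\in[-1,1]^d$, we have $1-\cJ_m\in[0,2]$, so the argument of the inner exponential is non-positive, and each $F_{mj}$ takes values in $(0,1]$.

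Next, fix $\bm{\cJ}^{(1)},\bm{\cJ}^{(2)}\in\jj^d$ and set $\Delta:=\jjdnorm{\bm{\cJ}^{(1)}-\bm{\cJ}^{(2)}}$. Using $|e^{-a}-e^{-b}|\leqslant|a-b|$ for $a,b\geqslant 0$, pulling the absolute value inside the expectation, and using the stability-induced integrability $\ee[B_{mj}]<\infty$ together with $\|g_{mj}\|_{L^1(\rr_+)}<\infty$, I obtain the pointwise estimate
\begin{align*}
\bigl|F_{mj}(\cJ_m^{(1)})(u,\bm{s},\bm{z})-F_{mj}(\cJ_m^{(2)})(u,\bm{s},\bm{z})\bigr|
&\leqslant \ee[B_{mj}]\int_0^u g_{mj}(v)\,\bigl|\cJ_m^{(1)}(u-v)-\cJ_m^{(2)}(u-v)\bigr|\,\mathrm{d}v \\
&\leqslant \|h_{mj}\|\,\Delta,
\end{align*}
uniformly in $u\in[0,t]$, $\bm{s}\in\rr_+^d$, $\bm{z}\in[-1,1]^d$.

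Finally, I would invoke the telescoping inequality that for scalars $a_m,b_m\in[-1,1]$ one has $|\prod_m a_m-\prod_m b_m|\leqslant\sum_m|a_m-b_m|$, applied to the $F_{mj}$-factors, to deduce
\begin{align*}
\bigl|\phi_j(\bm{\cJ}^{(1)})(u,\bm{s},\bm{z})-\phi_j(\bm{\cJ}^{(2)})(u,\bm{s},\bm{z})\bigr|\leqslant \sum_{m=1}^d\|h_{mj}\|\,\Delta,
\end{align*}
and then take the $\rdnorm{\cdot}$-max over $j\in[d]$ followed by the supremum over $(u,\bm{s},\bm{z})$ to conclude $\jjdnorm{\phi(\bm{\cJ}^{(1)})-\phi(\bm{\cJ}^{(2)})}\leqslant L\,\Delta$ with $L=\max_j\sum_m\|h_{mj}\|<\infty$. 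I expect the main bookkeeping hurdle to be ensuring that all bounds are genuinely uniform in $(\bm{s},\bm{z})$; this is why it matters that $C_j$ is bounded by $1$ (so it drops out of the telescoping bound) and that the $L^1$-bounds on $g_{mj}$ and the first moment of $B_{mj}$ kill the dependence on $\bm{s}$ inside $F_{mj}$ when estimating differences.
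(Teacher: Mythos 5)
Your proof is correct, and it rests on the same core estimate as the paper's: the argument of each inner exponential is non-positive (since $B_{mj}\geqslant 0$, $g_{mj}\geqslant 0$ and $1-\cJ_m(\cdot)\geqslant 0$ because transforms in $\jj$ take values in $[-1,1]$), so the exponential is $1$-Lipschitz there, and the difference of the arguments is controlled by $\ee[B_{mj}]\,\lVert g_{mj}\rVert_{L^1(\rr_+)}\,\jjdnorm{\bm{\cJ}^{(1)}-\bm{\cJ}^{(2)}}$; the paper implements exactly this via the mean value theorem $e^{a}-e^{b}=(a-b)e^{c}$ with $c\leqslant 0$, which is your inequality $|e^{-a}-e^{-b}|\leqslant|a-b|$. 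Where you genuinely diverge is in how the $d$ factors of $\phi_j$ are assembled. The paper first merges $\prod_{m}\ee[\exp(\cdots)]$ into a single expectation $\ee[\exp(-\sum_{m}\cdots)]$ (a step that uses independence of $B_{1j},\dots,B_{dj}$), applies the mean value theorem once to the combined exponential, and then invokes the triangle inequality and Young's inequality for convolutions, concluding with an $\epsilon$--$\delta$ choice in which the admissible $\delta$ shrinks with $t$. Your telescoping of the product of $[-1,1]$-valued factors avoids both the independence step and Young's inequality, and it delivers a clean, $t$-uniform Lipschitz constant $L=\max_{j}\sum_{m}\lVert h_{mj}\rVert$ rather than mere continuity; the only imprecision is in the very last step, where aggregating over $j$ under the Euclidean norm $\rdnorm{\cdot}$ costs an extra factor $\sqrt{d}$ rather than a maximum, which is immaterial for the conclusion. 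Both arguments are valid; yours is the tidier and slightly stronger of the two.
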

\begin{proof}
See Section \ref{section: proofs} in the Appendix.
\end{proof}

Before we can state the main results of this section, we need an intermediate result.
In Definition \ref{def: hawkes cluster}, we saw that every event in source component $j$ generates events into target component $m$ according to an inhomogeneous Poisson process $K_{mj}(\cdot)$, with intensity $B_{mj}g_{mj}(\cdot)$, with $B_{mj}$ understood to be sampled at every event in $N_j(\cdot)$.
We need to specify when the offspring events arrive exactly, since these can generate further offspring only after they arrive.
Given $u$ as the remaining time after the arrival of the source event, let $v\leqslant u$ and denote by $H_{ij}(v\,|\,u)$ the probability that an offspring event was already generated before $v$, conditional on it being generated before $u$.
Also recall that each first-generation event generates a sub-cluster, as part of the original cluster.

\begin{lemma}
\label{lem: cluster independence children}
Consider the cluster process $\bm{S}_j^\star(\cdot)$ for $\star\in\{\bm{N},\bm{Q},\bm{\lambda}\}$ generated by an immigrant event $(T^{(0)},j)$ in component $j\in[d]$ and let $u=t-T^{(0)}$ be the time after this arrival.
Then the following statements hold:
\begin{enumerate}[(i)]
    \item Sub-clusters are i.i.d.; more precisely, for each $m\in[d]$, the sequence
    \begin{align*}
        \Big(\bm{S}_m^\star(u-T_k)\Big)_{k\in[n]},
%        \Big( S_{i\leftarrow m}(u-T_{k_{mj}})\Big)_{k_{mj}\in[n_{mj}]},
    \end{align*} is an i.i.d.\ sequence, conditional on $\{ K_{mj}(u) = n\}$ for some $n\in\nn$ and with $(T_k)_k$ the arrival times of the first-generation events. %The superscript $T_{k_{mj}}$ indicates the sub-cluster generated by the first generation events;
    \item For $v \leqslant u$, the density $h_{ij}(v) = \frac{\mathrm{d}}{\mathrm{d}v}H_{ij}(v\,|\,u)$ is given by
    \begin{align}
    \label{eq: marginal density child born already}
        h_{ij}(v\,|\,u) = \frac{g_{ij}(v)}{\int_0^u g_{ij}(w)\,\mathrm{d}w}.
    \end{align}
\end{enumerate}
\end{lemma}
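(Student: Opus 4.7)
My plan is to establish (ii) first and then deduce (i) from it together with the branching-level independence built into Definition~\ref{def: hawkes cluster}.

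For (ii), I would first condition on the jump-size random variable $B_{ij}$ associated with the generating event. Conditional on $B_{ij}$, the offspring process $K_{ij}(\cdot)$ is an inhomogeneous Poisson process on $[0,u]$ with intensity $B_{ij}\,g_{ij}(\cdot)$ and cumulative intensity $B_{ij}\int_0^u g_{ij}(w)\,\mathrm{d}w$. Invoking the classical order-statistics property for inhomogeneous Poisson processes (see e.g., \cite[Chapter~7]{DVJ07}), the $n$ arrival times in $[0,u]$, conditional on $\{K_{ij}(u)=n\}$ and on $B_{ij}$, are distributed as the order statistics of $n$ i.i.d.\ random variables with common density
\[
\frac{B_{ij}\,g_{ij}(v)}{B_{ij}\int_0^u g_{ij}(w)\,\mathrm{d}w}=\frac{g_{ij}(v)}{\int_0^u g_{ij}(w)\,\mathrm{d}w},
\]
on $[0,u]$. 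Since this density does not depend on $B_{ij}$, marginalising over $B_{ij}$ leaves it unchanged and yields \eqref{eq: marginal density child born already}.

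For (i), I would fix $m\in[d]$ and work conditional on $\{K_{mj}(u)=n\}$. By the recursive construction in Part~(2) of Definition~\ref{def: hawkes cluster}, each first-generation event in target component $m$ independently initiates its own sub-cluster, using freshly drawn, mutually independent jump-size variables and offspring Poisson processes. By the self-similarity property noted in Section~\ref{sec:ModelBranching}, the sub-cluster generated by the $k$-th first-generation event is distributionally equivalent to an independent copy $\bm{S}_m^{\star,(k)}(\cdot)$ of the generic cluster $\bm{S}_m^\star(\cdot)$, evaluated at the elapsed time $u-T_k$. Combining this with (ii)---which, via exchangeability of the order statistics, allows me to pass to $n$ i.i.d.\ copies $T_k$ with density $h_{mj}(\cdot\,|\,u)$ at the level of the joint law of the sub-clusters---the collection $(\bm{S}_m^\star(u-T_k))_{k\in[n]}$ becomes the pointwise composition of $n$ independent copies of $\bm{S}_m^\star(\cdot)$ with $n$ i.i.d.\ elapsed-time arguments, hence i.i.d.

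The delicate point I anticipate is in (ii): the rate of $K_{ij}(\cdot)$ involves the random jump size $B_{ij}$, so one must carefully condition on $B_{ij}$ before invoking the Poisson order-statistics property and then exploit the cancellation of $B_{ij}$ in the conditional density. A secondary subtlety in (i) is the formal justification that sub-clusters are generated via mutually independent copies of all the underlying randomness; this is implicit in Definition~\ref{def: hawkes cluster} but is cleanest to verify by induction on generation depth in conjunction with the classical Poisson cluster representation (\cite{HO74,DVJ07}).
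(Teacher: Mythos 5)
Your proof is correct and follows essentially the same route as the paper's: part (ii) hinges on conditioning on $B_{ij}$ and observing that it cancels (the paper performs the Bayes computation conditional on a single offspring event, whereas you invoke the order-statistics property for general $n$, but the content is identical), and part (i) rests on the recursive branching construction of Definition~\ref{def: hawkes cluster}. If anything, your explicit appeal to exchangeability when passing from ordered arrival times to i.i.d.\ elapsed-time arguments in (i) is slightly more careful than the paper's ``modulo the time shift'' remark.
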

\begin{proof}
%Part (i) follows immediately from the observation that the sub-clusters evolve independently.
To prove part~(i), fix $m\in[d]$.
Then, conditional on $\{K_{mj}(u)=n\}$, the number of first-generation events, the sub-clusters $\bm{S}^\star_m(u-T_k)$ can be considered clusters
generated by an immigrant in component $m\in[d]$, which are i.i.d.\ due to the construction in part~(2) of Definition \ref{def: hawkes cluster}, modulo the time shift.

To prove (ii), we note that $K_{ij}(t)$ is distributed as a Poisson random variable with parameter $\int_0^t B_{ij}g_{ij}(s)\mathrm{d}s$, conditional on the realisation of $B_{ij}$;
see Definition \ref{def: hawkes cluster}.
Using Bayes rule, we compute
\begin{align*}
    H_{ij}(v\, | \, u)
    &= \pp( K_{ij}(v) = 1, K_{ij}(u) - K_{ij}(v) = 0 \ | \ K_{ij}(u) = 1 ) \\
    %&= \frac{\pp(K_{ij}(v)=1) \pp(K_{ij}(u)-K_{ij}(v)=0)}{\pp(K_{ij}(u)=1)} \\
    &= \frac{\exp\big( -\int_0^v B_{ij} g_{ij}(w)\mathrm{d}w\big) \int_0^v B_{ij} g_{ij}(w)\mathrm{d}w \exp\big( -\int_v^u B_{ij} g_{ij}(w)\mathrm{d}w\big)}{\exp\big( -\int_0^u B_{ij} g_{ij}(w)\mathrm{d}w\big) \int_0^u B_{ij} g_{ij}(w)\mathrm{d}w} \\
    &= \frac{\int_0^v g_{ij}(w)\mathrm{d}w}{\int_0^u g_{ij}(w)\,\mathrm{d}w},
\end{align*}
which yields the stated result.
\end{proof}

We proceed to our characterization of the transform $\bm{\cJ}_{\bm{S}^{\bm{Q}},\bm{S}^{\bm{\lambda}}}(u)$, for given $\bm{s},\bm{z}$,
in terms of a fixed point of the mapping $\phi$.
Here, by `fixed point' we mean that there exists an element in $\jj^d$ such that applying the mapping $\phi$ to it returns the same element.
(Uniqueness considerations will be dealt with later.)

\begin{theorem}
\label{thm: fixed point theorem}
The vector of time-dependent joint transforms ${\boldsymbol {\mathcal G}}(u)\equiv\bm{\cJ}_{\bm{S}^{\bm{Q}},\bm{S}^{\bm{\lambda}}}(u)$ defined in \eqref{eq: joint transform vector S^Q,S^lambda} satisfies the fixed-point equation %$\bm{\cJ}_{\bm{S}^{\bm{Q}},\bm{S}^{\bm{\lambda}}} = \phi(\bm{\cJ}_{\bm{S}^{\bm{Q}},\bm{S}^{\bm{\lambda}}})$ in a point-wise manner, i.e.
\begin{align}
\label{eq: fixed point cluster process - phi}
    {\boldsymbol {\mathcal G}}(u)
    =\phi({\boldsymbol {\mathcal G}})(u).
\end{align}
%with $\phi$ the mapping from Eqn. (\ref{eq: phi vector definition}).
%holds with $\eta_j \in \jj_t^{2d}$ the pgf of the cluster $\bm{S}_j(u)$ defined in (\ref{eq: definition eta pgf cluster}).
\end{theorem}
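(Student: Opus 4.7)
The plan is to establish the identity entry by entry, that is, to show $\cG_j(u)=\phi_j(\bm{\cG})(u)$ for each $j\in[d]$ and each $u\in[0,t]$. Starting from the definition of $\cG_j(u)$ in \eqref{eq: joint transform clusters Q,lambda}, I would substitute the distributional equalities \eqref{eq: distributional equality S_ij} for both $S^{\bm{Q}}_{i\leftarrow j}(u)$ and $S^{\bm{\lambda}}_{i\leftarrow j}(u)$. The integrand then decomposes into three blocks: a sojourn block $z_j^{\bm{1}_{\{J_j>u\}}}$ (from the $i=j$ entry of the $\bm{Q}$-equality), a source block $\prod_{i=1}^d e^{-s_iB_{ij}g_{ij}(u)}$ (from the leading argument of the $\bm{\lambda}$-equality), and an offspring block $\prod_{m=1}^d\prod_{k=1}^{K_{mj}(u)}\prod_{i=1}^d z_i^{S^{\bm{Q},(k)}_{i\leftarrow m}(u-T_k)} e^{-s_i S^{\bm{\lambda},(k)}_{i\leftarrow m}(u-T_k)}$, where the $S^{\star,(k)}_{i\leftarrow m}$ are the sub-cluster copies attached to the $k$-th first-generation event in component $m$. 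The independence of $J_j$ from the cluster mechanics and the cross-sectional independence of the jump sizes governing the source and offspring contributions allow the first two factors on the right-hand side of \eqref{eq: phi_j definition mapping} to be peeled off immediately.

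For the offspring block I would condition iteratively. By independence of the processes $K_{1j},\dots,K_{dj}$ and of the sub-clusters across target components $m$, the expectation factors into a product over $m\in[d]$. Fix such an $m$ and condition on $B_{mj}$: by Definition \ref{def: hawkes cluster}, $K_{mj}(u)$ is then Poisson with mean $B_{mj}\int_0^u g_{mj}(w)\ddiff w$. Conditioning further on $K_{mj}(u)=n$ together with the arrival times, Lemma \ref{lem: cluster independence children}(i) makes the $n$ sub-clusters i.i.d.\ copies of $\bm{S}^{\star}_m$, so the innermost expectation collapses to $\prod_{k=1}^n \cG_m(u-T_k)$. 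Averaging over the arrival times using the density supplied by Lemma \ref{lem: cluster independence children}(ii), namely $g_{mj}(v)/\int_0^u g_{mj}(w)\ddiff w$, turns this into $\bigl(\int_0^u \cG_m(u-v)g_{mj}(v)\ddiff v\,/\int_0^u g_{mj}(w)\ddiff w\bigr)^n$; summing over $n$ against the Poisson weights yields the exponential $\exp\bigl(-B_{mj}\int_0^u g_{mj}(v)(1-\cG_m(u-v))\ddiff v\bigr)$; finally, taking expectation over $B_{mj}$ reproduces the $m$-th factor of $\phi_j(\bm{\cG})(u)$.

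The main obstacle lies in the careful bookkeeping of the different layers of randomness: the $B_{mj}$ driving $K_{mj}$ must stay inside the outermost expectation, with the Poisson structure exposed only after conditioning, and the Fubini-type interchanges of Poisson sums, integrals over arrival times, and sub-cluster expectations need to be justified. These interchanges are permissible by the boundedness $|\cG_m(\cdot)|\leqslant 1$ together with the integrability of each $g_{mj}$, properties already used in Lemma \ref{lem: phi well defined} to show that $\phi$ is well-defined. Once this bookkeeping is performed, the resulting expression matches \eqref{eq: phi_j definition mapping} term by term, giving $\cG_j(u)=\phi_j(\bm{\cG})(u)$ and hence \eqref{eq: fixed point cluster process - phi}.
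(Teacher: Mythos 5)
Your proposal is correct and follows essentially the same route as the paper's proof: it substitutes the distributional equalities \eqref{eq: distributional equality S_ij}, peels off the sojourn and source factors, conditions on the first-generation counts and arrival times, invokes both parts of Lemma \ref{lem: cluster independence children} to reduce the sub-clusters to i.i.d.\ copies with the stated arrival-time density, collapses the Poisson sum into the exponential, and finally averages over the $B_{mj}$. The only addition is your explicit remark on justifying the Fubini-type interchanges, which the paper leaves implicit.
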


\begin{proof}
%The proof contains elements that have been used in the proof of Theorem \ref{thm: joint transform zeta characterization} as well.
The structure of the proof is as follows.
We start with the law of total expectation, then use the i.i.d.\ nature of (sub)-clusters,
write out the distribution of the first-generation events, and then collect terms.

We fix $j\in[d]$, and show that Eqn.\ (\ref{eq: fixed point cluster process - phi}) holds for the entry $\phi_j({\boldsymbol {\mathcal G}})$.
We throughout keep $\bm{s},\bm{z}$ fixed.
We start the computation of ${\mathcal G}_j(u)$ by applying the tower property and conditioning on the number of first-generation events.
For brevity, introduce the vector $\bm{K}_j(u) = (K_{1j},\dots,K_{dj}(u))^\top$, and note that $\pp(\bm{K}_j(u) = \bm{n}) = \prod_{m=1}^d \pp(K_{mj}(u)=n_m)$.
Using the distributional equalities (\ref{eq: distributional equality S_ij}),
in combination with the conditional independence between $S^{\bm{Q}}_{i\leftarrow j}(u)$, $S^{\bm{\lambda}}_{i\leftarrow j}(u)$ and $K_{mj}(u)$,
we have
\begin{align}\nonumber
    {\mathcal G}_j(u)&= \ee\Big[ \sum_{\bm{n}\in\nn_0^d} \ee\Big[\prod_{i=1}^d z_i^{S^{\bm{Q}}_{i\leftarrow j}(u)} e^{- s_i S^{\bm{\lambda}}_{i\leftarrow j}(u)} \, \Big| \, \bm{K}_j(u) = \bm{n}\Big] \,\pp(\bm{K}_j(u) = \bm{n}) \Big]\\
    &= c(u) \ee\Big[\sum_{\bm{n}\in\nn_0^d} \ee\Big[ \prod_{i=1}^d z_i^{\sum\limits_{m=1}^d \sum\limits_{k=1}^{n_{m}}S^{\bm{Q}}_{i\leftarrow m}(u-T_{k})} e^{-s_i \sum\limits_{m=1}^d \sum\limits_{k=1}^{n_{m}} S^{\bm{\lambda}}_{i\leftarrow m}(u-T_{k})} \Big] \,\pp(\bm{K}_j(u) = \bm{n}),\label{eq:gju}
\end{align}
%We also used independence between $J_j$ and $B_{ij}$, and that $K_{mj}(u) = n_{mj}$.
where, for brevity, we introduced the constant
\[c(u):=  \ee\Big[z_j^{\bm{1}_{\{J_j>u\}}}\Big]\prod_{i=1}^d \ee\Big[e^{-s_iB_{ij}g_{ij}(u)}\Big].\]
We now use the i.i.d.\ nature of the sub-clusters, as described by Lemma \ref{lem: cluster independence children},
to write the inner expectation in \eqref{eq:gju} as a product over the source components of the first-generation events.
To that end, let $T^{(mj)}$ be a random variable with probability density function $h_{mj}(\cdot)$ as given in (ii) of Lemma \ref{lem: cluster independence children},
such that $T^{(mj)}$ is distributed as $T_k$ if it was generated by $K_{mj}(\cdot)$.
With these observations and the definition of ${\boldsymbol {\mathcal G}}(\cdot)$, we can write
\begin{align*}
    \ee\Big[ \prod_{i=1}^d &z_i^{\sum\limits_{m=1}^d \sum\limits_{k=1}^{n_{m}}S^{\bm{Q}}_{i\leftarrow m}(u-T_{k})} e^{-s_i \sum\limits_{m=1}^d \sum\limits_{k=1}^{n_{m}} S^{\bm{\lambda}}_{i\leftarrow m}(u-T_{k})} \Big] \\
    &=\prod_{m=1}^d\ee\Big[ \prod_{i=1}^d z_i^{S^{\bm{Q}}_{i\leftarrow m}(u-T^{(mj)})} e^{-s_iS^{\bm{\lambda}}_{i\leftarrow m}(u-T^{(mj)})} \Big]^{n_{m}}
    = \prod_{m=1}^d \Big({\mathcal G}_m(u-T^{(mj)})\Big)^{n_{m}}.
\end{align*}
Using that the $K_{mj}(\cdot)$ are Poisson processes with intensity $B_{mj}g_{mj}(\cdot)$, and writing out the density $h_{mj}(\cdot)$,
we thus find
\begin{align*}
    {\mathcal G}_j&(u)
    =c(u)\, \ee\Big[\sum_{\bm{n}\in\nn_0^d} \prod_{m=1}^d \Big(\int_0^u h_{mj}(v\,|\,u)\,{\mathcal G}_m(u-v)\mathrm{d}v\Big)^{n_{m}} \\
    &\quad\hspace{1cm}\times\frac{\big(B_{mj}\int_0^u g_{mj}(v)\,\mathrm{d}v\big)^{n_{mj}}}{n_{m}!}
    \exp\Big(-B_{mj}\int_0^u g_{mj}(v)\,\mathrm{d}v\Big)\Big]\\
    &=c(u)\, \ee\Big[\sum_{\bm{n}\in\nn_0^d} \prod_{m=1}^d \frac{1}{n_{m}!}\Big(B_{mj}\int_0^u g_{mj}(v)\,{\mathcal G}_m(u-v)\,\mathrm{d}v\Big) ^{n_{m}}\, \exp\Big(-B_{mj}\int_0^u g_{mj}(v)\,\mathrm{d}v\Big)\Big] \\
    &=c(u)\prod_{m=1}^d\ee\Big[  \exp\Big( B_{mj} \int_0^u g_{mj}(v)\big({\mathcal G}_m(u-v) - 1)\mathrm{d}v\Big)\Big],
\end{align*}
where the last equality holds due to independence between the random variables $B_{mj}$.
Note that this last expression equals ${\mathcal G}_j(u)=\phi_j(\bm{\cJ}_{\bm{S}^{\bm{Q}},\bm{S}^{\bm{\lambda}}})(u,\bm{s},\bm{z})$ as was introduced in (\ref{eq: phi_j definition mapping}),
which finishes the proof.
\end{proof}

The functional equation described in Eqn.\ \eqref{eq: fixed point cluster process - phi} can be exploited to numerically approximate the joint transforms of the cluster processes.
The convergence result of Theorem \ref{thm: convergence of phi iteration} below entails that iterating the map $\phi$ leads to the desired fixed point,
thus having found ${\boldsymbol {\mathcal G}}(u)$ uniquely.
Once ${\boldsymbol {\mathcal G}}(u)$ has been obtained,
numerical inversion can be applied to obtain the corresponding joint densities and distribution functions;
likewise, arbitrary joint spatial-temporal moments can be evaluated by differentiation.

We now proceed to establish the convergence result.
Consider a joint transform $\bm{\cJ}^{(0)}(\cdot)\in\jj^d$.
Define the sequence $(\bm{\cJ}^{(n)}(\cdot))_{n\in\nn_0}$ by $\bm{\cJ}^{(n)}(\cdot) := \phi(\bm{\cJ}^{(n-1)}(\cdot))$ for $n\in{\mathbb N}$,
where $\phi$ is the mapping in Eqn.\ (\ref{eq: phi_j definition mapping}).
Note that $\bm{\cJ}^{(n)}(\cdot) \in \jj^d$ for all $n\in\nn_0$ by Lemma \ref{lem: phi well defined} and induction.

\begin{theorem}
\label{thm: convergence of phi iteration}
For any $\bm{\cJ}^{(0)}(\cdot)\in\jj^d$, the sequence $(\bm{\cJ}^{(n)})_{n\in\nn_0}(u)$ converges pointwise to the fixed point ${\boldsymbol {\mathcal G}}(u)=\bm{\cJ}_{\bm{S}^{\bm{Q}},\bm{S}^{\bm{\lambda}}}(u)$, i.e., as $n\to\infty$, for any $u\leqslant t$,
\begin{align}
    \bm{\cJ}^{(n)}(u)\equiv \bm{\cJ}^{(n)}(u,\bm{s},\bm{z}) \to \bm{\cJ}_{\bm{S}^{\bm{Q}},\bm{S}^{\bm{\lambda}}}(u,\bm{s},\bm{z})
    \equiv \bm{\cJ}_{\bm{S}^{\bm{Q}},\bm{S}^{\bm{\lambda}}}(u).
\end{align}
\end{theorem}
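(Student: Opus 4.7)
Let $\bm{\cG}(u) := \bm{\cJ}_{\bm{S}^{\bm{Q}},\bm{S}^{\bm{\lambda}}}(u)$ denote the fixed point supplied by Theorem~\ref{thm: fixed point theorem}, so that $\bm{\cG} = \phi(\bm{\cG})$. My plan is to establish a componentwise Lipschitz-type bound for the mapping $\phi$ and then iterate it to drive the error $\bm{\cJ}^{(n)} - \bm{\cG}$ to zero, with the stability condition $\rho(\|\bm{H}\|) < 1$ providing the required decay at the matrix level. Throughout, $\bm{s}$ and $\bm{z}$ are held fixed; I repeatedly use that each $\cJ_m(u) \in [-1,1]$, which follows from Definition~\ref{def: joint transform vectors} together with the constraints $\bm{z}\in[-1,1]^d$ and $\bm{s}\in\rr_+^d$.

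For the key Lipschitz inequality, fix $\bm{\cJ}_1, \bm{\cJ}_2 \in \jj^d$ and inspect the formula \eqref{eq: phi_j definition mapping}. The prefactor $c(u) := \ee\big[z_j^{\bm{1}_{\{J_j>u\}}}\big] \prod_{i=1}^d \ee\big[e^{-s_i B_{ij} g_{ij}(u)}\big]$ satisfies $|c(u)| \leq 1$. Set $A_m(u,\bm{\cJ}) := \int_0^u g_{mj}(v)\big(1 - \cJ_m(u-v)\big)\,\mathrm{d}v$; since $\cJ_m(u) \in [-1,1]$ we have $1 - \cJ_m \geq 0$, whence $A_m \geq 0$. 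Combining the telescoping identity $|\prod_m a_m - \prod_m b_m| \leq \sum_m |a_m - b_m|$ (valid when $|a_m|, |b_m| \leq 1$) with $|e^{-a} - e^{-b}| \leq |a-b|$ for $a, b \geq 0$, and then integrating out $B_{mj}$, I expect to obtain
\begin{align*}
\big|\phi_j(\bm{\cJ}_1)(u) - \phi_j(\bm{\cJ}_2)(u)\big| \leq \sum_{m=1}^d \|h_{mj}\|\,\sup_{v \leq u}\big|\cJ_{1,m}(v) - \cJ_{2,m}(v)\big|,
\end{align*}
where $\|h_{mj}\| = \ee[B_{mj}]\,\|g_{mj}\|_{L^1(\rr_+)}$.

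Define $e_{j,n}(u) := \sup_{v \leq u} |\cJ^{(n)}_j(v) - \cG_j(v)|$ and its vector form $\bm{e}_n(u) = (e_{j,n}(u))_{j=1}^d$. Applying the bound with $\bm{\cJ}_1 = \bm{\cJ}^{(n)}$ and $\bm{\cJ}_2 = \bm{\cG}$, and using $\bm{\cG} = \phi(\bm{\cG})$, yields the componentwise inequality
\[
\bm{e}_{n+1}(u) \leq \|\bm{H}\|^{\top}\,\bm{e}_n(u),
\]
the transpose arising because $\|h_{mj}\|$ is the $(m,j)$-entry of $\|\bm{H}\|$ and is summed over $m$. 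Iterating gives $\bm{e}_n(u) \leq (\|\bm{H}\|^{\top})^n\,\bm{e}_0(u)$, with $\bm{e}_0(u) \leq 2\,\bm{1}$ because all joint transforms take values in $[-1,1]$. The stability condition implies $\rho(\|\bm{H}\|^{\top}) = \rho(\|\bm{H}\|) < 1$, so $(\|\bm{H}\|^{\top})^n \to 0$ entrywise, whence $\bm{e}_n(u) \to \bm{0}$ and $\bm{\cJ}^{(n)}(u) \to \bm{\cG}(u)$ pointwise, as claimed.

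The main obstacle will be the Lipschitz bound itself, in particular the bookkeeping around $\cJ_m$ potentially being negative (which makes $1 - \cJ_m$ possibly exceed $1$); here one must verify $A_m \geq 0$ in order to legitimately invoke $|e^{-a}-e^{-b}|\leq|a-b|$, which relies on the structural fact $\cJ_m \leq 1$. A secondary subtlety worth flagging is that a naive Banach contraction in the ambient norm $\jjdnorm{\cdot}$ would demand $\max_j \sum_m \|h_{mj}\| < 1$, which is strictly stronger than $\rho(\|\bm{H}\|) < 1$; it is precisely the componentwise vector inequality above, combined with Gelfand's formula for the spectral radius of a nonnegative matrix, that lets the weaker spectral-radius condition suffice.
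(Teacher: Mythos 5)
Your proposal is correct, but it proves convergence by a genuinely different mechanism than the paper. The paper runs a Picard-type induction in the time variable: it compares two arbitrary iteration sequences and shows $|(\bm{\cJ}^{(n)}_{\rm A})_j(u)-(\bm{\cJ}^{(n)}_{\rm B})_j(u)|\leqslant (M_ju)^n/n!$, where the $1/n!$ decay comes from repeatedly integrating over $[0,u]$; this gives convergence on any finite horizon \emph{without} invoking the stability condition, but then requires extra work (L\'evy's continuity theorem to show the limit lies in $\jj^d$, continuity of $\phi$ from Lemma~\ref{lem: phi continuous and bounds} to show the limit is a fixed point, and finally Theorem~\ref{thm: fixed point theorem} to identify it). You instead derive the one-step Lipschitz bound $|\phi_j(\bm{\cJ}_1)(u)-\phi_j(\bm{\cJ}_2)(u)|\leqslant\sum_m\lVert h_{mj}\rVert\sup_{v\leqslant u}|\cJ_{1,m}(v)-\cJ_{2,m}(v)|$ (your derivation is sound: $\cJ_m\leqslant 1$ gives $A_m\geqslant 0$, so the telescoping product bound and $|e^{-a}-e^{-b}|\leqslant|a-b|$ apply, and the index bookkeeping yielding $\bm{e}_{n+1}\leqslant\lVert\bm{H}\rVert^{\top}\bm{e}_n$ is right), compare directly against the known fixed point $\bm{\cG}$, and let $\rho(\lVert\bm{H}\rVert)<1$ kill the error geometrically. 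This buys a cleaner identification of the limit (no L\'evy continuity step, and uniqueness of the fixed point in $\jj^d$ falls out for free) and a rate that is uniform in $u$, at the price of leaning on the stability condition—which the paper does assume as standing, so this is legitimate here, though the paper's argument would survive without it. Your closing observation that the componentwise matrix inequality is what lets $\rho(\lVert\bm{H}\rVert)<1$ replace the stronger column-sum condition $\max_j\sum_m\lVert h_{mj}\rVert<1$ is exactly the right point to flag.
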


\begin{proof}
Consider, for $\bm{\cJ}_{\rm A}^{(0)}(\cdot),{\bm{\cJ}}_{\rm B}^{(0)}(\cdot) \in\jj^d$, the sequences $\bm{\cJ}_{\rm A}^{(n)}(\cdot),{\bm{\cJ}}_{\rm B}^{(n)}(\cdot) \in\jj^d$ by
\[\bm{\cJ}_{\rm A}^{(n)}(\cdot) := \phi(\bm{\cJ}_{\rm A}^{(n-1)}(\cdot)),\hspace{1.2cm}
\bm{\cJ}_{\rm B}^{(n)}(\cdot) := \phi(\bm{\cJ}_{\rm B}^{(n-1)}(\cdot)),\]
where $n\in{\mathbb N}$.
We show that the sequences have a unique limit by first proving that there exists a constant $M_j>0$ such that, uniformly in $n\in{\mathbb N}_0$ and $u\leqslant t$,
\begin{align}
\label{eq: inductive step}
    | \big(\bm{\cJ}_{\rm A}^{(n)}\big)_j(u) - \big({\bm{\cJ}}_{\rm B}^{(n)}\big)_j(u)| \leqslant \frac{1}{n!}(M_ju)^n,
\end{align}
where $\big({\bm{\cJ}}_{i}^{(n)}\big)_j(u)$ it the $j$-th entry of $\bm{\cJ}_{i}^{(n)}(u)$, for $i\in\{{\rm A},{\rm B}\}$.
We prove \eqref{eq: inductive step} inductively.
For $n=1$, using the bounds appearing in the proof of Lemma \ref{lem: phi continuous and bounds}, we have
\begin{align*}
    | \big(\bm{\cJ}_{\rm A}^{(1)}\big)_j(u) - \big({\bm{\cJ}}_{\rm B}^{(1)}\big)_j(u) |
    &\leqslant d \max_{i\in[d]} \ee[B_{ij}]\,\lVert g_{ij} \rVert_{L^1(\rr_+)}u = M_j u,
\end{align*}
where we choose $M_j := d \max_{i\in[d]} \ee[B_{ij}]\,\lVert g_{ij} \rVert_{L^1(\rr_+)}$, which is finite by assumption.
For the induction step, assume that (\ref{eq: inductive step}) holds for some $n\in\nn$.
Again by the bounds appearing in the proof of Lemma \ref{lem: phi continuous and bounds}, we have
\begin{align*}
    |\big(\bm{\cJ}_{\rm A}^{(n+1)}\big)_j(u) &- \big({\bm{\cJ}}_{\rm B}^{(n+1)}\big)_j(u)| \\
    &\leqslant \sum_{m=1}^d\ee\Big[B_{mj}\int_0^u g_{mj}(v) \,\mathrm{d}v\int_0^u|\big(\bm{\cJ}_{\rm A}^{(n+1)}\big)_m(v) - \big({\bm{\cJ}}_{\rm B}^{(n+1)}\big)_m (v)| \,\mathrm{d}v \Big]\\
    &\leqslant M_j\frac{1}{n!}\int_0^u (M_jv)^n\,\mathrm{d}v = \frac{1}{(n+1)!}(M_j u)^{n+1}.
\end{align*}
Since this holds for all $j\in[d]$, it is clear that \eqref{eq: inductive step} implies that
\begin{align*}
    \lVert \bm{\cJ}_{\rm A}^{(n)}(u) - {\bm{\cJ}}^{(n)}_{\rm B}(u)\rVert_{\rr^d}^2
    \leqslant \sum_{j=1}^d \frac{1}{n!}(M_ju)^{2n}
    \leqslant \frac{d}{n!} (Mu)^{2n},
\end{align*}
where $M := \max_{j\in[d]} M_j$. Letting $n\to\infty$, we note that both sequences have the same limit
\begin{align}
\label{eq: limit joint transforms exists}
    \lim_{n\to\infty} \bm{\cJ}^{(n)}_{\rm A}(u) = \lim_{n\to\infty} {\bm{\cJ}}^{(n)}_{\rm B}(u) = \bm{\cJ}(u).
\end{align}

We now show that $\bm{\cJ}(\cdot) \in \jj^d$, which we argue to hold by applying L\'evy's continuity theorem.
Note that $(\bm{\cJ}^{(n)})_j(u)$ can be considered as a characteristic function;
indeed, as a consequence of $\bm{\cJ}^{(n)}(\cdot) \in \jj^d$, we can rewrite,
for certain $Y_m(u)\equiv Y_m^{(j,n)}(u)$, $X_m(u)\equiv X_m^{(j,n)}(u)$, and
$\bm{Z}^{(j,n)}(u):=\bm{Y}^{(j,n)}(u)-\bm{X}^{(j,n)}(u)$,
\begin{align*}
    \big(\bm{\cJ}^{(n)}\big)_j(u)
    &= \ee\Big[ \prod_{m=1}^d z_m^{Y_m(u)} e^{-s_m X_m(u)} \Big] \\
    &= \ee\Big[ \prod_{m=1}^d e^{{\rm i}\, r_m Y_m(u)-{\rm i}\,r_m X_m(u) }\Big] =: \ee\Big[ \exp\big( {\rm i}\, \bm{r}^\top \bm{Z}^{(j,n)}(u)\big)\Big],
\end{align*}
where we set, for $m\in[d]$, $s_m = {\rm i}\,r_m$ and $z_m = e^{{\rm i}\,r_m}$
with ${\rm i}$ the imaginary unit and $r_m \in \rr$.
As a result,
$(\bm{\cJ}^{(n)})_j(u)$ is the characteristic function of the random vector $\bm{Z}^{(j,n)}(u)$.
Since we established above that the limit of $\bm{\cJ}^{(n)}(u)$, as $n\to\infty$, exists,
L\'evy's continuity theorem implies that there is a random variable $\bm{Z}^{(j)}$
such that $\bm{Z}^{(j,n)}(u)$ converges weakly to $\bm{Z}^{(j)}$ as $n\to\infty$ with the  transform of $\bm{Z}^{(j)}$ being $(\bm{\cJ})_j(u)$.
This implies $(\bm{\cJ})_j(\cdot)\in\jj$, and hence also $\bm{\cJ}(\cdot)\in\jj^d$.

Finally, by Lemma \ref{lem: phi continuous and bounds} we know that $\phi$ is continuous, so
\begin{align*}
    \bm{\cJ}(u) = \lim_{n\to\infty} \bm{\cJ}^{(n+1)}(u) &= \lim_{n\to\infty}\phi\big(\bm{\cJ}^{(n)}\big)(u) = \phi\big(\lim_{n\to\infty}\bm{\cJ}^{(n)}\big)(u) = \phi(\bm{\cJ})(u),
\end{align*}
implying that the limit point is a fixed point of $\phi$.
By Theorem \ref{thm: fixed point theorem} we know that $\bm{\cJ}_{\bm{S}^{\bm{Q}},\bm{S}^{\bm{\lambda}}}(\cdot)$ is a fixed point of $\phi$ and combined with (\ref{eq: limit joint transforms exists}),
where we derived that every sequence has the same (unique) limit,
we have that $\bm{\cJ}^{(n)}(u) \to \bm{\cJ}_{\bm{S}^{\bm{Q}},\bm{S}^{\bm{\lambda}}}(u)$ as $n\to\infty$, irrespective of $\bm{\cJ}^{(0)}(\cdot)\in\jj^d$.
%To obtain convergence for complex valued $\bm{z}$ for which $|z_m|\leqslant 1$, we apply Theorem 5 in \cite{abate1}. This result is stated for Laplace-Stieltjes transforms with input from the right half complex plane. In our case, $s_m\in \R_+$ already lives in the right half complex plane and $z_m$ also through the change of variable $z_m = e^{-r_m}$ where $r_m \in \C$ is such that $\text{Re}(r_m)>0$. This establishes the convergence $(\bm{f}_n)_j(u,\bm{s},\bm{z}) \to \eta_j(u,\bm{s},\bm{z})$ for all $j\in[d]$ and $z_m\in\C$ with $|z_m|\leq1$. This in turn implies the convergence of $\bm{f}_n(u,\bm{s},\bm{z}) \to \bm{\eta}(u,\bm{s},\bm{z})$ such that (ii) is proven.
\end{proof}

%\textbf{Task: note that we use complex values of $z$ in numerics - is this necessary? Rav checks}

\section{Tail Probabilities}\label{sec:TailProbs}
In the previous sections, we have provided an exact analysis of the probabilistic behavior of (primarily) the random object $(\bm{Q}(t),\bm{\lambda}(t))$ for $t>0$.
We have characterized in particular the associated joint transform and established a corresponding fixed-point representation,
allowing for general decay functions $g_{ij}(\cdot)$,
general distributions of the jump sizes $B_{ij}$, and
general distributions of the sojourn times $J_i$.
%The fact that our results are in terms of transforms renders the results less transparent.
%Often, however, when considering specific asymptotic regimes, one {\it can} obtain results in (semi) closed-form.
In the present section, we provide an asymptotic analysis pertaining directly to the probability distribution functions of $\bm{Q}(t)$ and $\bm{\lambda}(t)$,
in the setting where the jump sizes' tail distributions are essentially of a power-law nature.

\subsection{Power-law tails and the Hawkes graph}
We start our exposition by introducing the concept of asymptotically power-law tails.
\begin{definition}[\textsc{Asymptotically Power-Law Tail (APT)}]\label{def:APT}
We say that a scalar-valued non-negative random variable $X$ has an asymptotically power-law tail
if there exist positive constants $C$ and $\gamma$ such that
\[{\mathbb P}(X>x) \,x^\gamma \to C,\]
as $x\to\infty$.
We write: $X\in {\rm APT}(C,\gamma)$ and refer to $\gamma$ as the \textit{tail index}.
\end{definition}

In the sequel, we assume that, for each $i,j\in[d]$, either $B_{ij}\in {\rm APT}(C_{ij},\gamma_{ij})$ for constants $C_{ij}>0$ and $\gamma_{ij}>1$, or $B_{ij}\equiv 0$.
Later in this section we discuss a few generalizations.

In the literature, a substantial amount of attention has been devoted to probabilistic systems in which
some of the underlying random variables have a distribution function with a power-law tail.
One often works with a class of distributions that is closely related to, but slightly wider than, APT,
namely the class of \textit{regularly varying} distributions:
then, ${\mathbb P}(X>x)$ behaves as $\ell(x)\,x^{-\gamma}$ when $x\to\infty$,
for some slowly-varying function $\ell(\cdot)$ (i.e., for all $a>0$ we have that $\ell(ax)/\ell(x) \to 1$).
A detailed exposition of regular variation in the context of insurance and finance can, for instance, be found in the monograph \cite{EKM97},
and for examples of its use in queueing theory we refer to \cite{FKZ07, ZBM04};
a general treatment is in \cite{BGT87}.
A powerful concept in this branch of the literature is the so-called `principle of a single big jump':
in many systems, rare events happen with `overwhelming' probability due to a single extreme outcome of a random quantity that features in the model.
In a simple single-dimensional counterpart of our model,
it is shown in \cite{KSBM18} that in the spirit of this principle $Q(t)$ essentially inherits the tail behavior of the jump size $B$.

An important result that we exploit in the general context of this section,
is a closure property related to the sum of independent random variables in APT.
If $X_i$ is in APT with tail index $\gamma_i$, for $i=1,2$, and $X_1$ and $X_2$ are independent,
then $X_1+X_2$ is also in APT with tail index equal to $\min\{\gamma_1,\gamma_2\}$, i.e., the heaviest tail dominates; cf.\ \cite{EKM97}.
Based on this property, one could na\"{\i}vely guess that in the present multivariate setting, the tail of $Q_i(t)$ will resemble the tail of the heaviest among $B_{i1},\ldots, B_{id}$.
This is however not necessarily the case: due to potential cross-excitation, heavy tails originating in another component may \textit{indirectly} propagate to component $i$.
This concept of propagation can be conveniently reasoned about relying on so-called Hawkes graphs, defined in the present setting as follows (see also, e.g., \cite{BMM15,K17} for related, different definitions).

\begin{definition}[\textsc{Hawkes graph}]
Let $V=\{1,\ldots,d\}$ be a set of vertices.
Let an edge $e_{ij}$ from $j$ to $i$ exist if $B_{ij}\in {\rm APT}(C_{ij},\gamma_{ij})$ for $C_{ij}>0$, $\gamma_{ij}>1$
(i.e., $B_{ij}$ is not identical to $0$),
and call the resulting set of directed edges $E$.
Then, the directed graph $(V,E)$ is called the Hawkes graph.
\end{definition}

Note that the vertices $V$ of the Hawkes graph are associated to the components of the Hawkes process $(\bm{Q}(\cdot), \bm{\lambda}(\cdot), \bm{N}(\cdot))$,
i.e., $i\in V$ corresponds to $Q_i(t)$ (or $\lambda_i(t), N_i(t))$.
The states in the Hawkes graph $(V,E)$ can be classified similarly to how this is conducted for Markov chains.
To this end, we set $P_{i\leftarrow j}=1$ if a path from $j$ to $i$ in $(V,E)$ exists, and $P_{i\leftarrow j}=0$ otherwise.
(The order of the indices in $P_{i\leftarrow j}$ might look unnatural at first sight,
but it should be borne in mind that $B_{ij}\in {\rm APT}(C_{ij},\gamma_{ij})$ implies that there is an edge from $j$ to $i$ in the Hawkes graph,
with corresponding jump sizes generated according to the random variable $B_{ij}$.)
We say that vertices $i$ and $j$ belong to the same class if $P_{i \leftarrow j}=P_{j\leftarrow i}=1$.
We shall call a class \textit{recurrent} if there is no path to vertices outside the class, otherwise it is \textit{transient}.

\begin{example}[\textsc{Bivariate, triangular}]
\label{example: bivariate hawkes graph}
Let $d=2$ and assume that $B_{12}$ is identical to $0$.
The corresponding Hawkes graph can be visualized as in the figure below.
\begin{center}
\begin{tikzpicture}[->,thick,node distance = 3cm, roundnode/.style={circle, draw= black, thick, minimum size = 7mm}]
\node[roundnode] (Q1) at (0,0) {$Q_1$};
\node[roundnode][right of =Q1] (Q2) {$Q_2$};
\draw[black] (Q1) to [out=30,in=150] node[above]{$e_{21}$} (Q2);
\draw[black] (Q1) to [out=150,in = 210,looseness = 7] node[left]{$e_{11}$}  (Q1);
\draw[black] (Q2) to [out=30,in = 330,looseness = 7] node[right]{$e_{22}$} (Q2);
\end{tikzpicture}
\end{center}
In this example, $P_{1\leftarrow 1} = 1$, $P_{2\leftarrow 1} = 1$, and $P_{2\leftarrow 2} =1$, but $P_{1\leftarrow 2} = 0$.
We conclude that there is one transient class $\{1\}$ and one recurrent class $\{2\}$.
\end{example}

\begin{example}[\textsc{Trivariate}]
\label{example: trivariate hawkes graph}
Let $d=3$ and assume that $B_{12}$, $B_{13}$, $B_{22}$ and $B_{31}$ are identical to $0$.
Then, the Hawkes graph takes the following form:
\begin{center}
\begin{tikzpicture}[->,thick,node distance = 3cm, roundnode/.style={circle, draw= black, thick, minimum size = 7mm}]
\node[roundnode] (Q1) at (0,0) {$Q_1$};
\node[roundnode][right of =Q1] (Q2) {$Q_2$};
\node[roundnode][right of =Q2] (Q3) {$Q_3$};
\draw[black] (Q1) to [out=30,in=150] node[above]{$e_{21}$} (Q2);
\draw[black] (Q1) to [out=150,in = 210,looseness = 7] node[left]{$e_{11}$}  (Q1);
\draw[black] (Q2) to [out=30,in=150] node[above]{$e_{32}$} (Q3);
\draw[black] (Q3) to [out=30,in = 330,looseness = 7] node[right]{$e_{33}$} (Q3);
\draw[black] (Q3) to [out=210,in=330] node[below]{$e_{23}$} (Q2);
\end{tikzpicture}
\end{center}
It is directly seen that there is one transient class $\{1\}$ and one recurrent class $\{2,3\}$,
since there is no path to $1$ from the other vertices.
\end{example}

The following lemma relates the cluster processes to the Hawkes graph path indicators.
%Recall that $S_{i\leftarrow j}^Q(u)$ counts the number of events in dimension $i$ from a cluster originating in dimension $j$ that are still in the system at time $u\in[0,t]$.
The proof follows using standard techniques and is for completeness given in Section \ref{section: proofs} in the Appendix.
%The existence of a path is implied by a non-zero probability that the cluster process generates events from source component j into target component i.

\begin{lemma}
\label{lem: equivalence R_ij > 0 and path existence}
Consider $u>0$. The following statements are equivalent:\\
{\em (i)}~${\mathbb E}[S^{\bm{Q}}_{i\leftarrow j}(u)]>0$; \\
{\em (ii)}~${\mathbb E}[S^{\bm{\lambda}}_{i\leftarrow j}(u)]>0$; and\\
{\em (iii)}~$P_{i\leftarrow j} =1$.
\end{lemma}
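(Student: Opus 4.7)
The plan is to reduce all three statements to an underlying graph-theoretic property of the auxiliary process $\bm{S}^{\bm{N}}_j(\cdot)$, and then to exploit the elementary comparisons between $\bm{S}^{\bm{N}}_j(\cdot)$, $\bm{S}^{\bm{Q}}_j(\cdot)$ and $\bm{S}^{\bm{\lambda}}_j(\cdot)$ laid out in Sections~\ref{sec:ModelBranching} and \ref{sec:JointTransforms}. Concretely, I first establish the baseline equivalence
\[
{\mathbb E}\big[S^{\bm{N}}_{i\leftarrow j}(u)\big]>0 \quad\Longleftrightarrow\quad P_{i\leftarrow j}=1,
\]
and then deduce (i)$\Leftrightarrow$(iii) and (ii)$\Leftrightarrow$(iii) from it.

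For the baseline equivalence, the ``only if'' direction is structural: by Definition~\ref{def: hawkes cluster}, every event contributing to $S^{\bm{N}}_{i\leftarrow j}(u)$ descends from the root immigrant in component $j$ via a finite chain $j=k_0\to k_1\to\cdots\to k_n=i$, and each link requires the corresponding offspring process $K_{k_\ell k_{\ell-1}}(\cdot)$ to be non-degenerate, which in turn forces $B_{k_\ell k_{\ell-1}}\not\equiv 0$, i.e.\ the edge $e_{k_\ell k_{\ell-1}}$ exists in the Hawkes graph. Hence the source components of the chain form a directed path from $j$ to $i$ and $P_{i\leftarrow j}=1$. The ``if'' direction is a constructive positive-probability argument: given a path $j=k_0\to\cdots\to k_n=i$, subdivide $[0,u]$ into $n+1$ subintervals and lower-bound the probability that the Poisson kernel $K_{k_\ell k_{\ell-1}}$ generates at least one offspring in the $\ell$-th subinterval; since each $B_{k_\ell k_{\ell-1}}$ is APT (in particular non-zero) and each $g_{k_\ell k_{\ell-1}}(\cdot)$ is a non-negative integrable function with positive total mass, this probability is strictly positive, yielding an event of positive probability on which $S^{\bm{N}}_{i\leftarrow j}(u)\geqslant 1$.

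For (i)$\Leftrightarrow$(iii), the forward implication follows from the pathwise bound $0\leqslant S^{\bm{Q}}_{i\leftarrow j}(u)\leqslant S^{\bm{N}}_{i\leftarrow j}(u)$ combined with the baseline equivalence. For the reverse implication, refine the above construction so that the terminal event in component $i$ falls in a small subinterval $(u-\varepsilon,u)$ and then intersect with the positive-probability event $\{J_i>\varepsilon\}$; on this intersection the terminal event has not yet departed by time $u$, so $S^{\bm{Q}}_{i\leftarrow j}(u)\geqslant 1$. For (ii)$\Leftrightarrow$(iii), invoke the identity
\[
S^{\bm{\lambda}}_{i\leftarrow j}(u)=\sum_{k=1}^d \sum_{r=1}^{S^{\bm{N}}_{k\leftarrow j}(u)} B_{ik,r}\,g_{ik}(u-T_r),
\]
and take expectations, using the independence of each $B_{ik,r}$ from the remainder of the cluster. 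Then ${\mathbb E}[S^{\bm{\lambda}}_{i\leftarrow j}(u)]>0$ if and only if there exists $k\in[d]$ with both ${\mathbb E}[B_{ik}]>0$ (equivalently, edge $e_{ik}$ exists) and ${\mathbb E}[S^{\bm{N}}_{k\leftarrow j}(u)]>0$; by the baseline equivalence the latter is $P_{k\leftarrow j}=1$. Concatenating the path from $j$ to $k$ with the edge $k\to i$ yields $P_{i\leftarrow j}=1$, and conversely any such path, truncated just before its final vertex, produces such a $k$.

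The main obstacle I anticipate is the bookkeeping in the constructive direction: one must carefully choose the subinterval lengths and the associated positive-probability events so that the intersection remains of positive probability uniformly in the path length, and, for (i)$\Rightarrow$(iii), handle the boundary case $i=j$ together with the corresponding trivial (length-zero) interpretation of $P_{j\leftarrow j}$ consistent with Examples~\ref{example: bivariate hawkes graph}--\ref{example: trivariate hawkes graph}. All remaining steps are routine once the baseline equivalence is in place.
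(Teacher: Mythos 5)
Your proposal is correct, but it takes a genuinely different route from the paper. The paper never passes through the counting cluster $S^{\bm{N}}_{i\leftarrow j}$: it takes expectations of the distributional equalities \eqref{eq: distributional equality S_ij} to obtain the linear renewal-type recursion
\[
\ee\big[S^{\bm{Q}}_{i\leftarrow j}(u)\big] = \bm{1}_{\{i=j\}}\,\pp(J_i>u) + \sum_{m=1}^d \ee[B_{mj}]\int_0^u g_{mj}(v)\,\ee\big[S^{\bm{Q}}_{i\leftarrow m}(u-v)\big]\,\mathrm{d}v
\]
(and its analogue for $S^{\bm{\lambda}}$, with first term $\ee[B_{ij}]g_{ij}(u)$), and simply unrolls it along a path in the Hawkes graph: each edge on the path contributes a factor $\ee[B_{km}]>0$, and the terminal self-term supplies strict positivity; the implication (i)$\Rightarrow$(iii) is dispatched in one line (positive expectation forces a positive-probability ancestral chain, hence a path). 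You instead prove a baseline equivalence for $\ee[S^{\bm{N}}_{i\leftarrow j}(u)]$ by an explicit positive-probability construction on subdivided intervals, then transfer to $\bm{Q}$ by the domination $S^{\bm{Q}}_{i\leftarrow j}\leqslant S^{\bm{N}}_{i\leftarrow j}$ plus the $\{J_i>\varepsilon\}$ refinement, and to $\bm{\lambda}$ by the mark-sum identity. Both arguments are sound and both lean on the same implicit non-degeneracy assumptions (that $g_{mj}$ has positive mass on the relevant subsets of $[0,u]$, and that $J_i$ is not degenerate at values forcing immediate departure); the paper's recursion route is shorter and has the structural advantage that the recursion it uses is exactly the system \eqref{eq: R_ij linear term equation} reused in Section~\ref{subsec:marginaltail}, whereas yours is more probabilistically explicit at the cost of the $\varepsilon$- and subinterval bookkeeping you yourself flag. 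You are also right that the case $i=j$ requires the convention $P_{j\leftarrow j}=1$ (length-zero path) for the equivalence with (i) to hold, a point the paper's proof glosses over.
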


\begin{proof}
See Section \ref{section: proofs} in the Appendix.
\end{proof}

\subsection{Tails of the marginal distributions}\label{subsec:marginaltail}
In this subsection, we establish the asymptotic behavior of ${\mathbb P}(Q_i(t)>x)$ and ${\mathbb P}(\lambda_i(t)>x)$
as $x\to\infty$, for any $i\in[d]$.
We start by introducing a few objects that play a pivotal role in our analysis:
\begin{align*}
    \delta_{ij}&:= \min_{m\in[d]}\{\gamma_{mj}: P_{i\leftarrow m}=1\},\:\:\:
    \bar\gamma_i := \min_{j\in[d]}\delta_{ij},\:\:\:
     I_i := \argmin_{j\in[d]} \delta_{ij},\:\:\:
     I_{ij} := \argmin_{m\in[d]}\{\gamma_{mj}: j\in I_i\}.
\end{align*}
Given the existence of a path from $m$ to $i$, $\delta_{ij}$ determines the smallest $\gamma_{mj}$ associated to $B_{mj}$
over all such $m$ for a given $j$.
Here, it is noted that we do not assume that $\min_{j\in[d]}\delta_{ij}$ is attained at a unique argument,
i.e., $I_i$ is a set that potentially consists of more than one element; the same applies to $I_{ij}$.

In addition, we introduce two functionals, in the same spirit as Eqn.~\eqref{eq: def linear functional A_ij branching structure first gen}.
First, with $\bm{x}(\cdot)=(x_1(\cdot),\ldots,x_d(\cdot))$ a (row-)vector-valued function and $P\geqslant 0$, for $j\in[d]$,
\begin{align}
    {\mathcal B}_{j}\big\{P, \bm{x}(\cdot)\big\}(u) = P
    +\sum_{m=1}^d {\mathbb E}[B_{mj}] \int_0^u g_{mj}(v)\,x_{m}(u-v)\,{\rm d}v.
    \label{eq:B_j}
\end{align}
Second, with $\bm{y}(\cdot)=(y_1(\cdot),\ldots,y_d(\cdot))$ denoting another (row-)vector-valued function and $\delta\in(1,2)$, for $i,j\in[d]$,
\begin{align}\nonumber
    {\mathcal B}^\delta_{ij}\big\{P, \bm{x}(\cdot)\,|\,\bm{y}(\cdot)\}(u) &=
    \omega_i P+ \sum_{m\in I_i}{\mathbb E}[B_{mj}]\int_0^u g_{mj}(v)\,x_{m}(u-v)\,{\rm d}v\,\\
    &\:\:\:\hspace{2cm}+\:\omega_i \sum_{m\in I_{ij}} C_{mj} \left(\int_0^u g_{mj}(v)\,y_{m}(u-v)\,{\rm d}v\right)^{ \delta},
    \label{eq:B_ij^delta}
\end{align}
where $\omega_i=\Gamma(1-\delta)$ with $\Gamma(\cdot)$ the Gamma function.

In the sequel, we will intensively work with functions $R^\star_{ij}(\cdot)$ and $R^{\star,\delta}_{ij}(\cdot)$, $\star\in\{\bm{Q},\bm{\lambda}\}$,
defined using \eqref{eq:B_j}--\eqref{eq:B_ij^delta},
and their row-vector-valued counterparts $\bm{R}^{\star}_{(i)}(\cdot)$ and $\bm{R}^{\star,\delta}_{(i)}(\cdot)$,
defined analogously to $\bm{S}^\star_{(i)}(\cdot)$.
The functions $R^{\bm{Q}}_{ij}(\cdot)$ and $R^{\bm{\lambda}}_{ij}(\cdot)$ for $i,j\in[d]$ satisfy the system of coupled functional equations
\begin{align}
\label{eq: R_ij linear term equation}
\begin{split}
    R^{\bm{Q}}_{ij}(u) &= {\mathcal B}_{j}\big\{\bm{1}_{\{i=j\}}\,{\mathbb P}(J_i>u), \bm{R}_{(i)}^{\bm{Q}}(\cdot)\big\}(u),\:\:\:\:\:\:
    R^{\bm{\lambda}}_{ij}(u) = {\mathcal B}_{j}\big\{{\mathbb E}[B_{ij}]\,g_{ij}(u), \bm{R}_{(i)}^{\bm{\lambda}}(\cdot) \big\}(u),
\end{split}
\end{align}
whereas the functions $R_{ij}^{\bm{Q},\delta}(\cdot)$ and $R_{ij}^{\bm{\lambda},\delta}(\cdot)$, with $j\in I_i$ and $\delta\in(1,2)$,
satisfy the system of coupled functional equations
\begin{align}
\label{eq: R_ij^gamma fractional term equation}
\begin{split}
    R^{\bm{Q},\delta}_{ij}(u) &= {\mathcal B}_{ij}^{\delta}\big\{0,\bm{R}_{(i)}^{\bm{Q},\delta}(\cdot)\,|\,
    \bm{R}_{(i)}^{\bm{Q}}(\cdot)
    \big\}(u), \:\:\:\:\:\:
    R^{\bm{\lambda},\delta}_{ij}(u) =  {\mathcal B}_{ij}^{\delta}\big\{\omega_iC_{ij},\bm{R}_{(i)}^{\bm{\lambda},\delta}(\cdot)\,|\,
    \bm{R}_{(i)}^{\bm{\lambda}}(\cdot)\big\}(u).
\end{split}
\end{align}

The following theorem reveals how APT-behavior is inherited by $Q_i(t)$ and $\lambda_i(t)$, $i\in[d]$.
Henceforth, for positive functions $f(\cdot)$ and $g(\cdot)$, we write $f(x) \sim g(x)$ as $x\to x_0$ to mean $\lim_{x\to x_0} f(x)/g(x) = 1$.

\begin{theorem}
\label{thm: Q_i and lambda_i heavy tailed}
Fix $i\in[d]$ and $t \in \rr_+$.
Assume that $\bar\gamma_i\in(1,2)$.
Then, $Q_i(t) \in {\rm APT}(\bar C^{\bm{Q}}_i,\bar\gamma_i)$ and
$\lambda_i(t) \in {\rm APT}(\bar C^{\bm{\lambda}}_i,\bar\gamma_i)$ for some $\bar C^{\bm{Q}}_i, \bar C^{\bm{\lambda}}_i>0$. More precisely,
\begin{align}
\label{eq: z transform Q_i and lambda_i heavy tailed thm statement}
\begin{split}
    &{\mathbb E}\Big[z^{ Q_i(t)}\Big] - 1 + (1-z)\, {\mathbb E}\big[Q_i(t)\big] \sim -(1-z)^{\bar\gamma_i} \sum_{j\in I_i}\lambda_{j,\infty} \int_0^t R^{\bm{Q},\bar\gamma_i}_{ij}(u)\,\mathrm{d}u, \notag \\
    &{\mathbb E}\Big[e^{ -s\lambda_i(t)}\Big] - 1 +s \,{\mathbb E}\big[\lambda_i(t)\big] \sim -s^{\bar\gamma_i}
    \sum_{j\in I_i}\lambda_{j,\infty}\int_0^t R^{\bm{\lambda},\bar\gamma_i}_{ij}(u)\,\mathrm{d}u,
\end{split}
\end{align}
as $z\uparrow 1$ and $s\downarrow 0$, respectively, and the first moments equal \[{\mathbb E}\big[Q_i(t)\big] = \sum_{j=1}^d \lambda_{j,\infty} \int_0^t R^{\bm{Q}}_{ij}(u)\,\mathrm{d} u,\:\:\:\:{\mathbb E}\big[\lambda_i(t)\big] = \sum_{j=1}^d\lambda_{j,\infty} \int_0^t R^{\bm{\lambda}}_{ij}(u)\,\mathrm{d} u.\]
\end{theorem}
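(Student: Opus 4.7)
The plan is to derive the stated asymptotics by combining the joint-transform formula from Theorem~\ref{thm: joint transform zeta characterization} with the fixed-point equation of Theorem~\ref{thm: fixed point theorem}, performing a two-term expansion of $\bm{\mathcal{G}}(u)$ around the reference point, and finally invoking a Tauberian theorem from \cite{BGT87}. Throughout I fix $i\in[d]$ and set $z_k=1$, $s_k=0$ for $k\neq i$, so that only the $i$-th component is probed; I write $w=1-z_i$ and $s=s_i$, and treat the $z$-limit (with $\bm{s}=\bm{0}$) and the $s$-limit (with $\bm{z}=\bm{1}$) separately.

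For the first-moment identities, differentiating $\bm{\mathcal{G}}=\phi(\bm{\mathcal{G}})$ once with respect to $z_i$ (resp.\ $s_i$) at the reference point $(\bm{s},\bm{z})=(\bm{0},\bm{1})$, and commuting differentiation with the expectations appearing in $\phi_j$ by dominated convergence, yields the linear Volterra system \eqref{eq: R_ij linear term equation}; its solution is $R^{\bm{Q}}_{ij}(u)=\mathbb{E}[S^{\bm{Q}}_{i\leftarrow j}(u)]$ and $R^{\bm{\lambda}}_{ij}(u)=\mathbb{E}[S^{\bm{\lambda}}_{i\leftarrow j}(u)]$. Substituting into Theorem~\ref{thm: joint transform zeta characterization} and differentiating once gives $\mathbb{E}[Q_i(t)]=\sum_j\lambda_{j,\infty}\int_0^t R^{\bm{Q}}_{ij}(u)\,du$ and the analogous identity for $\mathbb{E}[\lambda_i(t)]$.

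For the fractional-order correction I would use the classical one-sided Tauberian estimate that, since $B_{mj}\in{\rm APT}(C_{mj},\gamma_{mj})$,
\[\mathbb{E}[e^{-\theta B_{mj}}] = 1 - \theta\,\mathbb{E}[B_{mj}] + \Gamma(1-\gamma_{mj})\,C_{mj}\,\theta^{\gamma_{mj}} + o(\theta^{\gamma_{mj}}), \qquad \theta\downarrow 0,\]
and insert it into each factor of $\phi_j(\bm{\mathcal{G}})$. Taking the $z$-limit with $\bm{s}=\bm{0}$, the self-consistent ansatz
\[\mathcal{G}_m(u) = 1 - w\,R^{\bm{Q}}_{im}(u) + w^{\bar\gamma_i}\,R^{\bm{Q},\bar\gamma_i}_{im}(u) + o(w^{\bar\gamma_i})\]
then identifies the coefficients at order $w^{\bar\gamma_i}$. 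Of the Tauberian contributions $K_m^{\gamma_{mj}}$ with $K_m(u)=\int_0^u g_{mj}(v)(1-\mathcal{G}_m(u-v))\,dv\sim w$, only pairs $(m,j)$ with $\gamma_{mj}=\bar\gamma_i$ and $m$ on an optimal path to $i$ survive at this order: this is exactly the content of the index sets $I_i$ and $I_{ij}$, so that the resulting coupled system coincides with the first half of \eqref{eq: R_ij^gamma fractional term equation}, with $\omega_i=\Gamma(1-\bar\gamma_i)$. The $s$-expansion at $\bm{z}=\bm{1}$ is symmetric and produces $R^{\bm{\lambda},\bar\gamma_i}_{ij}(u)$; substituting back into the outer exponentials of Corollary~\ref{cor: joint transform multivariate} and Taylor-expanding to the same order yields \eqref{eq: z transform Q_i and lambda_i heavy tailed thm statement}. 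The Tauberian converse from \cite{BGT87} then transfers these expansions into the APT statements $Q_i(t)\in{\rm APT}(\bar C^{\bm{Q}}_i,\bar\gamma_i)$ and $\lambda_i(t)\in{\rm APT}(\bar C^{\bm{\lambda}}_i,\bar\gamma_i)$, with $\bar C^{\bm{Q}}_i,\bar C^{\bm{\lambda}}_i$ obtained from the displayed coefficients via the reflection factor $1/\Gamma(1-\bar\gamma_i)$.

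The main technical obstacle is the self-referential nature of the fractional expansion: since $\mathcal{G}_m$ appears on both sides of $\bm{\mathcal{G}}=\phi(\bm{\mathcal{G}})$, the ansatz must be verified in the fixed-point sense, and uniform-in-$u\in[0,t]$ control of the $o(\cdot)$ remainders is needed so that integrating against $\lambda_{j,\infty}\,du$ in Theorem~\ref{thm: joint transform zeta characterization} preserves the asymptotic order. Uniform-on-compacta Tauberian bounds from \cite{BGT87}, the convergence of the iteration scheme of Theorem~\ref{thm: convergence of phi iteration}, and careful graph-theoretic bookkeeping via $\delta_{ij}$, $I_i$, and $I_{ij}$ --- together with Lemma~\ref{lem: equivalence R_ij > 0 and path existence} to rule out contributions from components not reachable from the relevant immigrant source --- are the tools I would use to make this rigorous.
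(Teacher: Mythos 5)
Your proposal follows essentially the same route as the paper: specialize the joint transform of Theorem~\ref{thm: joint transform zeta characterization} to the $i$-th component, feed the fixed-point relation of Theorem~\ref{thm: fixed point theorem} into a two-term (linear plus fractional-order) expansion, identify the coefficient systems \eqref{eq: R_ij linear term equation}--\eqref{eq: R_ij^gamma fractional term equation} via the Hawkes-graph bookkeeping and Lemma~\ref{lem: equivalence R_ij > 0 and path existence}, and close with the Tauberian converse from \cite{BGT87}; this is exactly the paper's argument.

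One concrete slip: your Tauberian expansion has the wrong sign on the fractional term. Since $\ee[e^{-\theta B_{mj}}]-1+\theta\,\ee[B_{mj}]=\theta\int_0^\infty(1-e^{-\theta x})\,\pp(B_{mj}>x)\,\mathrm{d}x\geqslant 0$ by convexity of the Laplace--Stieltjes transform, and $\Gamma(1-\gamma_{mj})<0$ for $\gamma_{mj}\in(1,2)$, the correct expansion is $\ee[e^{-\theta B_{mj}}]\sim 1-\theta\,\ee[B_{mj}]-C_{mj}\Gamma(1-\gamma_{mj})\theta^{\gamma_{mj}}$ (as in the paper's Eqn.~\eqref{eq: bingham 8.1.6 for B_mj}), not $+\Gamma(1-\gamma_{mj})C_{mj}\theta^{\gamma_{mj}}$. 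Carried through, your sign would flip $R^{\bm{Q},\bar\gamma_i}_{ij}$ and produce negative tail constants in Corollary~\ref{cor: tail probability Q_i and lamda_i}; once corrected, the rest of your outline goes through as written.
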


Before we give the proof of Theorem~\ref{thm: Q_i and lambda_i heavy tailed},
we discuss the systems of Eqns.\ \eqref{eq: R_ij linear term equation}--\eqref{eq: R_ij^gamma fractional term equation}.
First, observe that they can be considered as vector-valued \textit{renewal} equations, owing to the structure of \eqref{eq:B_j}--\eqref{eq:B_ij^delta}.
We point out how to solve these for \eqref{eq: R_ij linear term equation};
\eqref{eq: R_ij^gamma fractional term equation} can be dealt with analogously, hence we only provide its final result.
In the sequel, we denote the Laplace-Stieltjes transform of $f(\cdot)$ by
\[\cL\{f(\cdot)\}(r) = \int_0^\infty e^{-ru}\,f(u)\,\mathrm{d}u,\]
for $r\geqslant 0$.
Taking the Laplace-Stieltjes transform of \eqref{eq: R_ij linear term equation}, recognizing the convolution structure, we readily obtain, with $\bar F_i(\cdot)={\mathbb P}(J_i>\cdot\,)$,
\begin{align}
\label{eq: Laplace-Stieltjes transform equations R_ij^Q}
    \cL\big\{R_{ij}^{\bm{Q}}(\cdot)\big\}(r) &= \bm{1}_{\{i=j\}}\cL\big\{\bar F_i(\cdot)\big\}(r) + \sum_{m=1}^d {\mathbb E}[B_{mj}]\cdot \cL\big\{g_{mj}(\cdot)\big\}(r) \cdot\cL\big\{R_{im}^{\bm{Q}}(\cdot)\big\}(r).
\end{align}
For a given argument $r\geqslant 0$ and component $i\in[d]$,
Eqn.\ \eqref{eq: Laplace-Stieltjes transform equations R_ij^Q} is a linear system from which the unknowns $z_{ij}(r)=\cL\{R_{ij}^{\bm{Q}}(\cdot)\}(r)$ can be solved.
This is done by first solving for any recurrent class the corresponding linear subsystem,
and then iteratively for any transient class leading to these recurrent classes.
Having computed $z_{ij}(\cdot)$, standard Laplace inversion can be invoked to identify the functions $R_{ij}^{\bm{Q}}(\cdot)$; see also Section \ref{sec:Numerics}.

A similar argumentation applies when taking the Laplace-Stieltjes transform of
\eqref{eq: R_ij^gamma fractional term equation}, leading to,
\begin{align}\notag
    \cL\big\{R_{ij}^{\bm{Q},\delta}(\cdot)\big\}(r) &= \sum_{m \in I_i} {\mathbb E}[B_{mj}]\cdot \cL\big\{g_{mj}(\cdot)\big\}(r)\cdot \cL\big\{R_{im}^{\bm{Q},\delta}(\cdot)\big\}(r) \\
    &\hspace{15mm}+ \omega_i \sum_{m \in I_{ij}} C_{mj}\, \cL\Big\{\big(g_{mj} *R_{im}^{\bm{Q}}\big)^{\delta}(\cdot)\Big\}(r), \label{eq: Laplace-Stieltjes transform equations R_ij^Q,gamma}
\end{align}
where, as usual, $*$ denotes the convolution operator, i.e.,
\begin{align*}
    \big(g_{mj} *R_{im}^{\bm{Q}}\big)^{\delta}(t) = \left(\int_0^t g_{mj}(v)R_{im}^{\bm{Q}}(t-v)\mathrm{d}v\right)^{\delta}.
\end{align*}
This is again a linear system, from which the $\bar z_{ij}(r)=\cL\{R_{ij}^{\bm{Q},\delta}(\cdot)\}(r)$ can be solved for any given $r\geqslant 0$.
Recall that at this stage the functions $R_{im}^{\bm{Q}}(\cdot)$ are available, thus allowing us to evaluate the last term in \eqref{eq: Laplace-Stieltjes transform equations R_ij^Q,gamma}.
We can obtain similar equations for the transforms
%of $R^{\bm{\lambda}}_{ij}(\cdot)$ and $R^{\bm{\lambda},\delta}_{ij}(u)$
pertaining to $\lambda_i(t)$; as these are fully analogous, we leave them out.

\begin{proof}[Proof of Theorem \ref{thm: Q_i and lambda_i heavy tailed}]
We fix $i\in[d]$ and prove the result for $Q_i(t)$; the arguments for $\lambda_i(t)$ are similar, \textit{mutatis mutandis}.
To that end, we set $s_m = 0$ for all $m\in[d]$ and $z_k = 1$ for $k\neq i$ and $z_i \equiv z \in [-1,1]$ in the joint transform of $({\bs Q}(t),{\bs \lambda}(t))$,
$\cJ_{\bm{Q},\bm{\lambda}}(t)$.
We thus obtain the $z$-transform of $Q_i(t)$ in terms of the cluster process entries $S^{\bm{Q}}_{i\leftarrow j}(u)$, %given in Corollary \ref{cor: joint transform multivariate}:
\begin{align}
\label{eq: def z transform Q_i and fixed point}
\begin{split}
    \ee\Big[z^{Q_i(t)}\Big] &= \prod_{j=1}^d \exp\Big( \lambda_{j,\infty} \int_0^t\big( \ee\big[z^{S^{\bm{Q}}_{i\leftarrow j}(u)}\big] - 1\big)\mathrm{d}u\Big),\\
    \ee\big[z^{S^{\bm{Q}}_{i\leftarrow j}(u)}\big] &= \ee\Big[z^{\bm{1}_{\{i=j\}}\bm{1}_{\{J_i>u\}}}\Big] \prod_{m=1}^d \cT\{B_{mj}\}\Big(\int_0^u g_{mj}(v)\big(1-\ee\big[z^{S^{\bm{Q}}_{i\leftarrow m}(u-v)}\big] \big)\mathrm{d}v\Big),
\end{split}
\end{align}
where the latter equation holds by our fixed-point theorem, i.e., Theorem~\ref{thm: fixed point theorem},
and $\cT\{B_{mj}\}(s) = \ee[e^{-sB_{mj}}]$ denotes the Laplace-Stieltjes transform of $B_{mj}$.
The idea is to analyze expansions of the $z$-transform appearing in (\ref{eq: def z transform Q_i and fixed point}) by using Tauberian theorems,
so as to establish the tail behavior of $Q_i(t)$.

Under the assumption that $B_{mj} \in {\rm APT}(C_{mj},\gamma_{mj})$, the $B_{mj}$ that have index $\gamma_{mj} \in (1,2)$
satisfy
\begin{align}
\label{eq: bingham 8.1.6 for B_mj}
    \cT\{B_{mj}\}(r) \sim 1 - r\ee[B_{mj}] - C_{mj} \Gamma(1-\gamma_{mj})r^{\gamma_{mj}},
\end{align}
as $r\downarrow 0$, where $\Gamma(\cdot)$ is the Gamma function, by virtue of a Tauberian theorem; see e.g., \cite[Theorem 8.1.6]{BGT87}.
Observe that \[\int_0^u g_{mj}(v)\big(1- \ee\big[z^{S^{\bm{Q}}_{i\leftarrow m}(u-v)}\big] \big)\mathrm{d}v \downarrow 0\] as $z \uparrow 1$.
Using this in (\ref{eq: bingham 8.1.6 for B_mj}), we obtain
\begin{align}
\label{eq: Tauber B_mj in transform terms}
    &\cT\{B_{mj}\}\Big(\int_0^u g_{mj}(v)\big(1-\ee\big[z^{S^{\bm{Q}}_{i\leftarrow m}(u-v)}\big]\big)\mathrm{d}v\Big)\notag \\
    &\sim 1 - \ee[B_{mj}] \int_0^u g_{mj}(v)\big(1-\ee\big[z^{S^{\bm{Q}}_{i\leftarrow m}(u-v)}\big] \big)\mathrm{d}v \\
    &\quad - C_{mj}\Gamma(1-\gamma_{mj}) \Big(\int_0^u g_{mj}(v)\big(1-\ee\big[z^{S^{\bm{Q}}_{i\leftarrow m}(u-v)}\big] \big)\mathrm{d}v\Big)^{\gamma_{mj}}.\notag
\end{align}
We substitute (\ref{eq: Tauber B_mj in transform terms}) in the expression for $\ee\big[z^{S^{\bm{Q}}_{i\leftarrow j}(u)}\big]$ in \eqref{eq: def z transform Q_i and fixed point}
in combination with the calculation $\ee[z^{\bm{1}_{\{i=j\}}\bm{1}_{\{J_i>u\}}}] = \pp(J_i \leqslant u) + z^{\bm{1}_{\{i=j\}}}\pp(J_i>u) = 1 - (1-z^{\bm{1}_{\{i=j\}}})\pp(J_j>u)$. This allows us to write, up to $O((1-z)^2)$ terms,
\begin{align}
\label{eq: 1-eta_c with B_mj}
    1- \ee\big[z^{S^{\bm{Q}}_{i\leftarrow j}(u)}\big]
    &\sim  1- \big( 1 - (1-z^{\bm{1}_{\{i=j\}}})\pp(J_i>u)\big) \notag\\
    &\quad \times\prod_{m=1}^d \Big\{ 1 - \ee[B_{mj}] \int_0^u g_{mj}(v)\big(1-\ee\big[z^{S^{\bm{Q}}_{i\leftarrow m}(u-v)}\big] \big)\mathrm{d}v \notag\\
    &\quad \hspace{8mm}- C_{mj}\Gamma(1-\gamma_{mj}) \Big(\int_0^u g_{mj}(v)\big(1-\ee\big[z^{S^{\bm{Q}}_{i\leftarrow m}(u-v)}\big] \big)\mathrm{d}v\Big)^{\gamma_{mj}} \Big\} \notag\\
    &= (1-z^{\bm{1}_{\{i=j\}}})\pp(J_i>u) + \sum_{m=1}^d  \ee[B_{mj}] \int_0^u g_{mj}(v)\big(1-\ee\big[z^{S^{\bm{Q}}_{i\leftarrow m}(u-v)}\big] \big)\mathrm{d}v \notag\\
    &\quad + \sum_{m=1}^d C_{mj}\Gamma(1-\gamma_{mj}) \Big(\int_0^u g_{mj}(v)\big(1-\ee\big[z^{S^{\bm{Q}}_{i\leftarrow m}(u-v)}\big] \big)\mathrm{d}v\Big)^{\gamma_{mj}}.
\end{align}

We now focus on the linear terms, and the next term of order strictly between $1$ and $2$, yet to be determined.
As for the linear terms, consider the expansion
\begin{align*}
    1-\ee\big[z^{S^{\bm{Q}}_{i\leftarrow j}(u)}\big]
    &= 1 - \left( 1 - (1-z)\ee\big[ S^{\bm{Q}}_{i\leftarrow j}(u)\big]\right) + O((1-z)^2) \\
    &= (1-z)\ee\big[S^{\bm{Q}}_{i\leftarrow j}(u)\big] + O((1-z)^2) = (1-z)R^{\bm{Q}}_{ij}(u) + O((1-z)^2),
\end{align*}
where the last equality follows from the observation $R_{ij}^{\bm{Q}}(u)= \ee[S^{\bm{Q}}_{i\leftarrow j}(u)]$;
applying the distributional equality \eqref{eq: distributional equality S_ij} to $\ee[S^{\bm{Q}}_{i\leftarrow j}(u)]$ yields Eqn.\ \eqref{eq: R_ij linear term equation}.

For the next term (beyond the linear terms, that is), we expand $1-\ee\big[z^{S^{\bm{Q}}_{i\leftarrow j}(u)}\big]$ again,
now including a term of fractional order $\vartheta_{ij} \in (1,2)$ (whose value will be determined below), yielding
\begin{align*}
    1-\ee\big[z^{S^{\bm{Q}}_{i\leftarrow j}(u)}\big] =  (1-z)  R^{\bm{Q}}_{ij}(u)+ (1-z)^{\vartheta_{ij}} R^{\bm{Q},\delta}_{ij}(u) + O((1-z)^2),
\end{align*}
where $R^{\bm{Q},\delta}_{ij}(u)$ is the solution to (\ref{eq: R_ij^gamma fractional term equation}), which we argue next.
By substituting this expansion in (\ref{eq: 1-eta_c with B_mj}), equating the terms of order between $1$ and $2$, and ignoring higher-order terms,
we obtain
\begin{align*}
    (1-z)^{\vartheta_{ij}}R^{\bm{Q},\delta}_{ij}(u)
    &= \sum_{m=1}^d (1-z)^{\vartheta_{im}} \ee[B_{mj}] \int_0^t g_{mj}(v)R^{\bm{Q},\delta}_{im}(u-v) \mathrm{d}v\\
    &\quad + \sum_{m=1}^d C_{mj}(1-z)^{\gamma_{mj}}\Gamma(1-\gamma_{mj}) \Big(\int_0^u g_{mj}(v)R^{\bm{Q}}_{im}(u-v)\mathrm{d}v\Big)^{\gamma_{mj}}.\notag
\end{align*}

The problem of solving this system of equations comes down to determining $\vartheta_{i1},\dots,\vartheta_{id}$
and finding the term(s) of lowest order among the $\gamma_{mj}$.
In other words, we need to equate $\vartheta_{ij}$ on the LHS with the minimal $\gamma_{mj}$ on the RHS among the non-zero terms.
As $R^{\bm{Q}}_{im}(u) = \ee[S^{\bm{Q}}_{i\leftarrow m}(u)]$,
we have by Lemma \ref{lem: equivalence R_ij > 0 and path existence} that $R^{\bm{Q}}_{im}(u)$ is non-zero if there is a path from $m$ to $i$ in the Hawkes graph.
We therefore obtain the condition $\vartheta_{ij} = \delta_{ij}$,
with $\delta_{ij}$ introduced at the beginning of Section \ref{subsec:marginaltail}.
Note that $\vartheta_{ij}$ does not necessarily equal $\gamma_{ij}$, but rather corresponds to the heaviest tail originating from $j$ with a potential path of propagation to $i$.
Taking into account the source and target component of this heaviest tail, we obtain Eqn.~(\ref{eq: R_ij^gamma fractional term equation}).
Finally, we substitute this back into the $z$-transform of $Q_i(t)$ in (\ref{eq: def z transform Q_i and fixed point}), yielding, up to $O((1-z)^2)$ terms, as $z\uparrow 1$,
\begin{align*}
    \ee\Big[ z^{Q_i(t)} \Big]
    &\sim \prod_{j=1}^d \exp\Big( -\lambda_{j,\infty} \int_0^t \big( (1-z) R^{\bm{Q}}_{ij}(u) + (1-z)^{\vartheta_{ij}} R^{\bm{Q},\delta}_{ij}(u)\big)\mathrm{d}u \Big) \\
    &\sim 1 - (1-z) \sum_{j=1}^d \lambda_{j,\infty}\int_0^t R^{\bm{Q}}_{ij}(u)\mathrm{d}u
    -  \sum_{j=1}^d (1-z)^{\vartheta_{ij}}\lambda_{j,\infty}\int_0^t R^{\bm{Q},\delta}_{ij}(u)\mathrm{d}u,
\end{align*}
where in the last asymptotic equality we used the Taylor expansion of the exponential expression.
To obtain the lowest order term among the $\vartheta_{ij}$, we set $\bar \gamma_i := \min_{j\in[d]} \{\vartheta_{ij}\}$,
and the components from where the heaviest tails originate are given by $I_i := \argmin_{j\in[d]}\{\vartheta_{ij}\} \subseteq [d]$.
After rewriting this last expression and observing that the linear term equals $\ee[Q_i(t)]$, we obtain
\begin{align}
\label{eq: z transform expansion Q_i}
    \ee\Big[ z^{Q_i(t)} \Big] - 1 + (1-z)\ee[Q_i(t)] \sim -(1-z)^{\bar
    \gamma_i}\sum_{j\in I_i}\lambda_{j,\infty}\int_0^t R^{\bm{Q},\bar \gamma_i}_{ij}(u)\mathrm{d}u.
\end{align}

Note that \eqref{eq: z transform expansion Q_i} is now in the general form stated in \cite[Theorem 8.1.6]{BGT87},
as an expansion of the $z$-transform of $Q_i(t)$, which yields that $Q_i(t) \in {\rm APT}(\bar C^{\bm{Q}}_i,\bar \gamma_i)$ for some $\bar C^{\bm{Q}}_i>0$.
%An application of this theorem yields that $Q_i(t) \in {\rm APT}(\bar C^{\bm{Q}}_i,\bar \gamma_i)$, where $\bar C^{\bm{Q}}_i = C_{kj}$ for any $k\in I_{ij}$ and $j\in I_i$.
%The same reasoning determines the heavy tail of $\lambda_i(t)$. Exploiting the fixed point relation of $\eta_{ij}^\lambda$ yields the linear $R_{ij}^\lambda$ and fractional $R_{ij}^{\lambda,\gamma}$ functions and the equations they satisfy as given in (\ref{eq: R_ij linear term equation}) and (\ref{eq: R_ij^gamma fractional term equation}) respectively. Substituting this expression for $\eta_{ij}^\lambda$ in the Laplace-Stieltjes transform of $\lambda_i(t)$ in (\ref{eq: Laplace-Stieltjes transform single entry lambda and fixed point eta}), we then apply the Tauberian Theorem \ref{thm: Bingham 8.1.6} to relate the Taylor expansion of the Laplace-Stieltjes transform of $\lambda_i(t)$ with its tail behavior, yielding $\lambda_i(t) \in \text{RV}_{- \gamma^{(i)}}$.
\end{proof}

The result of Theorem \ref{thm: Q_i and lambda_i heavy tailed} in combination with suitable Tauberian theorems allows us to describe the asymptotic tail behavior of
$Q_i(t)$ and $\lambda_i(t)$.
In order to properly take care of the constants that appear in the application of this theorem,
we introduce, for $\star \in \{\bm{Q},\bm{\lambda}\}$,
the function $\bar R_{ij}^{\star, \delta}(\cdot)$ through
\begin{equation}
R_{ij}^{\star,\delta}(u) = \Gamma(1-\delta)\bar R_{ij}^{\star, \delta}(u).
\label{eq:barR}
\end{equation}
Note that $\bar R_{ij}^{\star, \delta}(u)$ satisfies Eqn.~\eqref{eq: R_ij^gamma fractional term equation} without the $\omega_i$ term.
We can then rewrite Eqn.~\eqref{eq: z transform expansion Q_i} as
\begin{align*}
    \ee\Big[ z^{Q_i(t)} \Big] - 1 + (1-z)\ee[Q_i(t)] \sim -(1-z)^{\bar
    \gamma_i}\Gamma(1-\bar
    \gamma_i)\sum_{j\in I_i}\lambda_{j,\infty}\int_0^t \bar R^{\bm{Q},\bar \gamma_i}_{ij}(u)\mathrm{d}u.
\end{align*}

\begin{corollary}
\label{cor: tail probability Q_i and lamda_i}
Fix $i\in[d]$ and $t\in\rr_+.$ Assume that $\bar\gamma_i\in(1,2)$.
Then, as $x\to\infty$,
\begin{align}
\label{eq: tail prob queue and lambda corollary}
\begin{split}
    {\mathbb P}(Q_i(t) > x) &\sim \left(\sum_{j \in I_i}\lambda_{j,\infty}\int_0^t \bar R^{\bm{Q},\bar\gamma_i}_{ij}(u)\,\mathrm{d}u\right)x^{-\bar\gamma_i},\\
    {\mathbb P}(\lambda_i(t) > x) &\sim \left( \sum_{j\in I_i}\lambda_{j,\infty}\int_0^t \bar R^{\bm{\lambda},\bar\gamma_i}_{ij}(u)\,\mathrm{d}u\right)x^{-\bar
    \gamma_i}.
\end{split}
\end{align}
\end{corollary}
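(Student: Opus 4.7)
The plan is to derive the tail asymptotics by applying a Tauberian theorem to the expansions of the $z$-transform and Laplace--Stieltjes transform established in Theorem~\ref{thm: Q_i and lambda_i heavy tailed}. First, I would substitute the relation $R_{ij}^{\star,\bar\gamma_i}(u)=\Gamma(1-\bar\gamma_i)\bar R_{ij}^{\star,\bar\gamma_i}(u)$ from \eqref{eq:barR} into \eqref{eq: z transform Q_i and lambda_i heavy tailed thm statement}, so that the expansions take the form
\begin{align*}
\ee\big[z^{Q_i(t)}\big]-1+(1-z)\,\ee[Q_i(t)] &\sim -\Gamma(1-\bar\gamma_i)(1-z)^{\bar\gamma_i}\!\sum_{j\in I_i}\!\lambda_{j,\infty}\!\int_0^t\!\bar R^{\bm{Q},\bar\gamma_i}_{ij}(u)\,\mathrm{d}u,\\
\ee\big[e^{-s\lambda_i(t)}\big]-1+s\,\ee[\lambda_i(t)] &\sim -\Gamma(1-\bar\gamma_i)\,s^{\bar\gamma_i}\!\sum_{j\in I_i}\!\lambda_{j,\infty}\!\int_0^t\!\bar R^{\bm{\lambda},\bar\gamma_i}_{ij}(u)\,\mathrm{d}u,
\end{align*}
as $z\uparrow 1$ and $s\downarrow 0$ respectively. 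Isolating $\Gamma(1-\bar\gamma_i)$ explicitly puts the expansions in precisely the form required by the Tauberian theorem.

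Next, I would invoke \cite[Theorem 8.1.6]{BGT87} (or rather, its standard translation to probability generating functions of $\nn$-valued random variables for the first expansion, and its direct Laplace--transform version for the second). Since $\bar\gamma_i\in(1,2)$ and the first moments $\ee[Q_i(t)]$ and $\ee[\lambda_i(t)]$ are finite and explicit, the Tauberian theorem asserts the equivalence between an expansion of the transform of the form $\ee[e^{-sX}]-1+s\,\ee[X]\sim -\Gamma(1-\gamma)L(1/s)s^{\gamma}$ (as $s\downarrow 0$) and the tail asymptotic $\pp(X>x)\sim L(x)\,x^{-\gamma}$ (as $x\to\infty$), and an entirely analogous statement for probability generating functions near $z=1$. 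In our situation $L$ degenerates to the constant $\sum_{j\in I_i}\lambda_{j,\infty}\int_0^t \bar R^{\star,\bar\gamma_i}_{ij}(u)\,\mathrm{d}u$, which is trivially slowly varying, and reading off this constant yields the two displays in \eqref{eq: tail prob queue and lambda corollary}.

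The main point requiring care is the sign-bookkeeping surrounding $\Gamma(1-\bar\gamma_i)$: since $\bar\gamma_i\in(1,2)$ we have $\Gamma(1-\bar\gamma_i)<0$, so factoring it out of $R_{ij}^{\star,\bar\gamma_i}$ flips the sign, and one must verify that the resulting $\bar R_{ij}^{\star,\bar\gamma_i}(\cdot)$ are non-negative (this follows directly from their renewal-type defining equations \eqref{eq: R_ij^gamma fractional term equation} after the $\omega_i$ factor is divided out, since $R^{\bm{Q}}_{ij}, R^{\bm{\lambda}}_{ij}$ and $g_{mj}$ are non-negative). Once the sign of $-\Gamma(1-\bar\gamma_i)$ is matched against the sign convention of the Tauberian theorem, the asserted positive constants in front of $x^{-\bar\gamma_i}$ appear cleanly, and positivity of both sides of \eqref{eq: tail prob queue and lambda corollary} is preserved.

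I expect the main obstacle to be precisely this verification of constants and signs, together with checking that the Tauberian theorem applies in the form stated: for the probability generating function version one needs a standard transfer argument (approximating $\ee[z^{Q_i(t)}]$ by the Laplace transform of a suitable continuous random variable, or appealing to the discrete analogue of BGT~8.1.6 as in \cite[Corollary 8.7.1]{BGT87}), and for the Laplace transform version one should confirm finiteness of $\ee[\lambda_i(t)]$, which follows from the explicit expression given in Theorem~\ref{thm: Q_i and lambda_i heavy tailed} together with the stability condition $\rho(\lVert\bm{H}\rVert)<1$.
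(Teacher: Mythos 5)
Your proposal is correct and follows essentially the same route as the paper: the paper also rewrites the expansions of Theorem~\ref{thm: Q_i and lambda_i heavy tailed} via the substitution $R_{ij}^{\star,\delta}(u)=\Gamma(1-\delta)\bar R_{ij}^{\star,\delta}(u)$ from \eqref{eq:barR} to isolate the factor $\Gamma(1-\bar\gamma_i)$, and then reads off the tail asymptotics by applying \cite[Theorem 8.1.6]{BGT87} in its Laplace-transform form for $\lambda_i(t)$ and its $z$-transform (probability generating function) form for $Q_i(t)$. Your additional care about the sign of $\Gamma(1-\bar\gamma_i)$ and the non-negativity of $\bar R_{ij}^{\star,\bar\gamma_i}(\cdot)$ matches the paper's own remarks on this point.
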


\iffalse
\begin{proof}
This follows immediately through applications of theorems \ref{thm: Bingham 8.1.6} and \ref{thm: Bingham 8.1.6 z transform} for $\lambda_i(t)$ and $Q_i(t)$ respectively, where the terms on the RHS arise from equation (\ref{eq: z transform Q_i and lambda_i heavy tailed thm statement}). These theorems relate the Laplace and $z$-transform with the tail probabilities, see also the proof of Theorem \ref{thm: Q_i and lambda_i heavy tailed}.
\end{proof}
\fi

\iffalse
The minus signs on the RHS of (\ref{eq: tail prob queue and lambda corollary}) may be misleading. However, $\Gamma(1-\bar \gamma_i)$ will be negative since $\bar \gamma_i \in (1,2)$, resulting in the expressions to be non-negative. In the numerics section, we compute the $R_{ij}^{\,\bm{\cdot}}$ functions for different excitation functions $g_{ij}$ and investigate the tail behavior in (\ref{eq: tail prob queue and lambda corollary}) of both $Q_i(t)$ and $\lambda_i(t)$.
\fi

%\begin{remark}
An interesting special case of Theorem~\ref{thm: Q_i and lambda_i heavy tailed}, and Corollary~\ref{cor: tail probability Q_i and lamda_i},
concerns the situation in which the Hawkes graph is \textit{irreducible}, i.e., the case where there exists a directed path between every pair of vertices.
Theorem~\ref{thm: Q_i and lambda_i heavy tailed} then implies that all components $Q_i(t)$ and $\lambda_i(t)$, $i\in[d]$, inherit the minimum $\gamma =\min_{i,j\in[d]}\{\gamma_{ij}\}$,
i.e., all components $Q_i(t)$ and $\lambda_i(t)$ are APT with tail index $\gamma$.
%\end{remark}

\begin{remark}
In the setting of Theorem~\ref{thm: Q_i and lambda_i heavy tailed}, we have assumed $\bar\gamma_i\in(1,2)$.
In case $\bar\gamma_i\in(m,m+1)$ for some $m\in\{2,3,\ldots\}$, a similar analysis can be performed,
but additional terms are to be included in the expansions of \eqref{eq: def z transform Q_i and fixed point}.
The statement that $Q_i(t)$ and $\lambda_i(t)$ are APT with tail index $\bar\gamma_i$ carries over.
While the proof is conceptually analogous to the case of $\bar\gamma_i \in (1,2)$,
various expressions that will appear in the corresponding proof are substantially more involved.
\end{remark}

\begin{remark}
We conclude this subsection with a remark on extensions to cases in which some of the jump size distributions are not of APT type.
Following Definition \ref{def:APT}, we have assumed that the $B_{ij}$ are either of APT type or identical to $0$,
for a transparent exposition.
At the expense of additional notation and administration in the proof,
also the cases that some of the $B_{ij}$ are identical to a positive constant $b_{ij}$ can be covered.
Likewise, using a similar but more cumbersome treatment, one can also handle the situation in which, besides the dominant jump sizes $B_{ij}$ of APT type,
there are light-tailed jump sizes as well.
\end{remark}

\subsection{The tail index is a class property}
To fully appreciate how the chain structure of the Hawkes graph is reflected in the tail indices of $Q_i(t)$ and $\lambda_i(t)$, $i\in[d]$,
this subsection systematically studies this feature,
starting by revisiting the two examples discussed earlier.
We focus primarily on $Q_i(t)$; the analysis for $\lambda_i(t)$ is analogous.

In the setting of Example~\ref{example: bivariate hawkes graph}, the tail index of $Q_1(t)$ is $\gamma_{11}$
and the tail index of $Q_2(t)$ is $\bar\gamma_2 =\min \{\gamma_{21}, \gamma_{22}\}$.
In Example~\ref{example: trivariate hawkes graph}, $Q_1(t)$ has tail index $\gamma_{11}$,
whereas both $Q_2(t)$ and $Q_3(t)$ (which together form a recurrent class) have tail index $\bar\gamma_2 = \bar\gamma_3 = \min\{\gamma_{21},\gamma_{32},\gamma_{23},\gamma_{33}\}$.
The above suggests that the tail index is, like recurrence, transience and periodicity for Markov chains, a {\it class property},
i.e., the tail index is the same for all states belonging to a class.
This claim is confirmed in the next result.

\iffalse
A few consequences and remarks for Theorem \ref{thm: Q_i and lambda_i heavy tailed} are in place. Motivated by the above examples, we observe that entries of the same class inherit the same heavy tail since they take the minimum among all directed edges that reach the class. In the Markov Chain literature, it is known that e.g. recurrence, transience, periodicity are class properties, i.e. they are equal for all vertices in a class, i.e., the tail index is the same for all states belonging to a class. This seems to also be the case for heavy tails and leads us to the following result.
\fi

\begin{proposition}
For $i\in[d]$ in a given class of the Hawkes graph, all ${\mathbb P}(Q_i(t)>x)$ have the same tail index, i.e., the tail index is a class property.
\end{proposition}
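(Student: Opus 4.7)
The plan is to reduce everything to the explicit formula for the tail index established in Theorem~\ref{thm: Q_i and lambda_i heavy tailed} and Corollary~\ref{cor: tail probability Q_i and lamda_i}, namely $\bar\gamma_i=\min_{j\in[d]}\delta_{ij}$ with $\delta_{ij}=\min_{m\in[d]}\{\gamma_{mj}:P_{i\leftarrow m}=1\}$, and then to show that for two states $i,k$ in the same class of the Hawkes graph one has $\bar\gamma_i=\bar\gamma_k$.

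The central combinatorial observation is that membership in the same class makes the ancestor set invariant: I claim $\{m\in[d]:P_{i\leftarrow m}=1\}=\{m\in[d]:P_{k\leftarrow m}=1\}$ whenever $i$ and $k$ belong to the same class. This is proved by path concatenation. Suppose $P_{i\leftarrow m}=1$, so there exists a directed path from $m$ to $i$ in the Hawkes graph. Since $i$ and $k$ are in the same class, $P_{k\leftarrow i}=1$, i.e.\ there is a path from $i$ to $k$. Concatenating gives a directed path from $m$ to $k$, hence $P_{k\leftarrow m}=1$. The reverse inclusion follows identically from $P_{i\leftarrow k}=1$.

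Consequently, for every $j\in[d]$, the minimization in
\begin{equation*}
\delta_{ij}=\min_{m\in[d]}\{\gamma_{mj}:P_{i\leftarrow m}=1\}
\end{equation*}
is carried out over exactly the same index set as for $\delta_{kj}$, so $\delta_{ij}=\delta_{kj}$ and therefore $\bar\gamma_i=\min_{j}\delta_{ij}=\min_{j}\delta_{kj}=\bar\gamma_k$. Applying Corollary~\ref{cor: tail probability Q_i and lamda_i} to both $i$ and $k$ shows that ${\mathbb P}(Q_i(t)>x)$ and ${\mathbb P}(Q_k(t)>x)$ both decay like a power with the common exponent $\bar\gamma_i=\bar\gamma_k$, which is exactly the class property claimed. (The same argument applied to $\lambda_i(t)$ would give the analogous result for the intensity components.)

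There is no real obstacle here: everything is a direct graph-theoretic consequence of the characterization already proved in Theorem~\ref{thm: Q_i and lambda_i heavy tailed}. The only point that requires care is to state the path-concatenation argument cleanly, so that one genuinely uses both directions of equivalence ($P_{i\leftarrow k}=1$ and $P_{k\leftarrow i}=1$) when swapping the roles of $i$ and $k$; after that, the equality $\bar\gamma_i=\bar\gamma_k$ is immediate and the invocation of the corollary is a one-line step.
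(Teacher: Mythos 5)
Your proof is correct and follows essentially the same route as the paper, which simply points to the role of $\delta_{ij}=\min_m\{\gamma_{mj}:P_{i\leftarrow m}=1\}$ in Theorem~\ref{thm: Q_i and lambda_i heavy tailed}; your path-concatenation argument showing that states in the same class have identical ancestor sets, hence identical $\delta_{ij}$ and $\bar\gamma_i$, is precisely the detail the paper leaves implicit.
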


\begin{proof}
The proof follows from Theorem \ref{thm: Q_i and lambda_i heavy tailed},
in particular, from the role played by $\delta_{ij}$ in the proof of that theorem.
\end{proof}

The following somewhat more elaborate example demonstrates how in general
to iteratively compute the tail indices corresponding to the individual classes.

\begin{example}[\textsc{A more involved Hawkes graph}]
\label{example: sixvariate hawkes graph}
In this example, we consider a system of 6 states such that the vertex set $V = \{Q_1,\dots,Q_6\}$ and the directed edges $E$ are drawn below.
\begin{center}
\begin{tikzpicture}[->,thick,node distance = 3cm, roundnode/.style={circle, draw= black, thick, minimum size = 7mm}]
\node[roundnode,green] (Q1) at (0,0) {$Q_1$};
\node[roundnode,dashed, red][right of =Q1] (Q2) {$Q_2$};
\node[roundnode,dashed, red][right of =Q2] (Q3) {$Q_3$};
\begin{scope}[node distance = 2.5cm]
\node[roundnode,dash dot, cyan][below of =Q1] (Q4) {$Q_4$};
\node[roundnode,dash dot, cyan][below of= Q2] (Q5) {$Q_5$};
\node[roundnode,dotted, blue][below of= Q3] (Q6) {$Q_6$};
\end{scope}
\draw[black] (Q1) to [out=30,in=150] node[above]{$e_{21}$} (Q2);
\draw[black] (Q1) to [out=150,in = 205,looseness = 7] node[left]{$e_{11}$}  (Q1);
\draw[black] (Q2) to [out=30,in=150] node[above]{$e_{32}$} (Q3);
\draw[black] (Q3) to [out=30,in = 330,looseness = 7] node[right]{$e_{33}$} (Q3);
\draw[black] (Q3) to [out=210,in=330] node[below]{$e_{23}$} (Q2);
\draw[black] (Q4) to [out=150, in=225] node[left]{$e_{14}$} (Q1);
\draw[black] (Q4) to [out=30, in = 150] node[above]{$e_{54}$} (Q5);
\draw[black] (Q5) to [out=210, in=330] node[below]{$e_{45}$} (Q4);
\draw[black] (Q6) to [out=150, in = 225] node[right]{$e_{36}$} (Q3);
\draw[black] (Q6) to [out=30,in = 330,looseness = 7] node[right]{$e_{66}$} (Q6);
\end{tikzpicture}
\end{center}
The different colors (drawing styles) represent the class each vertex belongs to.
Observe that there are three transient classes, viz.\ ${\rm green~(G, solid)}$, {\rm cyan~(C, dashed}-{\rm dotted)} and ${\rm blue~(B, dotted)}$,
and one recurrent class, viz.\ ${\rm red~(R, dashed)}$.
We start by determining the tail index of the transient class that is the `furthest away' from the recurrent class, viz.\ ${\rm C}$.
This class has distance $2$ to the recurrent class, as one has to go through another transient class to reach the recurrent class.
In evident notation,
\[\gamma_{\rm C} = \bar\gamma_4 = \bar\gamma_5 = \min\{\gamma_{45},\gamma_{54}\}.\]
We proceed with the transient classes with distance $1$ to the recurrent class, i.e., ${\rm G}$ and ${\rm  B}$, and find
\begin{align*}
    \gamma_{\rm G}&=\bar\gamma_1 =
    \min\{\gamma_{\rm C},\gamma_{14}, \gamma_{11}\}
    =\min\{\gamma_{54},\gamma_{45},\gamma_{14}, \gamma_{11}\}, \\
    \gamma_{\rm B}&=\bar\gamma_6 = \gamma_{66}.
\end{align*}
We finally determine the tail indices of the recurrent classes.
In this case, there is just one recurrent class, viz.\  ${\rm R}$:
\begin{align*}
   \gamma_{\rm R}&= \bar\gamma_2 = \bar\gamma_3 =
    \min\{ \gamma_{\rm G},  \gamma_{\rm B},\gamma_{21},\gamma_{36},\gamma_{32},\gamma_{23},\gamma_{33}\}\\
    &=\min\{\gamma_{45},\gamma_{54},\gamma_{14},\gamma_{11},\gamma_{66}, \gamma_{21},\gamma_{36},\gamma_{32},\gamma_{23},\gamma_{33}\}.
\end{align*}
\end{example}

\subsection{The asymptotic behavior of linear combinations}
In the above, we have focused on the asymptotic (marginal) tail behavior of the $i$-th components $Q_i(t)$ and $\lambda_i(t)$.
To gain insight into the corresponding joint asymptotic behavior, we now consider the tail behavior of $\langle \bm{c},\bm{Q}(t)\rangle$ for some ${\bm{c}} \in \rr_+^d$,
where $\langle\bm{x},\bm{y}\rangle$ denotes the inner product.
As before, the case $\langle \bm{c},\bm{\lambda}(t)\rangle$ can be dealt with analogously.
For convenience, we do so for the case that the Hawkes graph is irreducible;
the case that the Hawkes graph is not irreducible can be addressed as well, but its analysis is somewhat cumbersome and mechanical,
as it requires distinguishing various cases.

As demonstrated above, in the irreducible case, $\bar\gamma_i=\gamma$ for all $i\in[d]$.
Likewise, also the set $I_i$ does not depend on $i$, and is therefore denoted simply by $I$.

\begin{proposition}
\label{cor: heavy tail linear combination queue}
Fix $t\in\rr_+$ and $\bm{c} \in \rr_+^d$.
Let the Hawkes graph be irreducible.
Assume $\gamma\in(1,2)$.
Then, $\langle \bm{c},\bm{Q}(t)\rangle \in {\rm APT}(\hat C, \gamma)$ for some $\hat C>0$.
More precisely,
\begin{align}
\label{eq: lin cominbation queue heavy tailed thm statement}
\begin{split}
    &{\mathbb E}\Big[z^{\langle \bm{c},\bm{Q}(t)\rangle}\Big] - 1 + (1-z) {\mathbb E}\big[\langle \bm{c},\bm{Q}(t)\rangle\big]
    \sim -(1-z)^{\gamma}\sum_{i=1}^d c_i^\gamma \sum_{j\in I}\lambda_{j,\infty}\int_0^t R^{\bm{Q},\gamma}_{ij}(u)\,\mathrm{d}u,
\end{split}
\end{align}
as $z\uparrow 1$.
The first moment equals
\begin{equation*}
{\mathbb E}\big[\langle \bm{c},\bm{Q}(t)\rangle\big] = \sum_{j=1}^d \lambda_{j,\infty} \int_0^t \sum_{i=1}^d c_i R^{\bm{Q}}_{ij}(u)\,\mathrm{d}u.
\end{equation*}
\end{proposition}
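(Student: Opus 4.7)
The plan is to adapt the proof of Theorem \ref{thm: Q_i and lambda_i heavy tailed} to the linear-combination setting by substituting $z_i=z^{c_i}$ and $s_i=0$ for every $i\in[d]$. Concretely, Theorem \ref{thm: joint transform zeta characterization} first gives
\[\ee\big[z^{\langle \bm c,\bm Q(t)\rangle}\big]=\prod_{j=1}^d\exp\Big(\lambda_{j,\infty}\int_0^t\big(\eta_j^{\bm c}(u,z)-1\big)\,\mathrm{d}u\Big),\qquad \eta_j^{\bm c}(u,z):=\ee\Big[\prod_{i=1}^d z^{c_i S_{i\leftarrow j}^{\bm Q}(u)}\Big],\]
while Theorem \ref{thm: fixed point theorem} supplies the coupled functional equation that the vector $(\eta_j^{\bm c})_{j\in[d]}$ satisfies.

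I would then write the formal expansion $1-\eta_j^{\bm c}(u,z)=(1-z)L_j(u)+(1-z)^\gamma F_j(u)+O((1-z)^2)$ as $z\uparrow 1$, using $1-z^{c_j}\sim c_j(1-z)$ and the Tauberian APT expansion \eqref{eq: bingham 8.1.6 for B_mj} of each $\cT\{B_{mj}\}$. Matching the $(1-z)$-terms in the fixed-point equation gives the renewal system
\[L_j(u)=c_j\,\pp(J_j>u)+\sum_{m=1}^d\ee[B_{mj}]\int_0^u g_{mj}(v)L_m(u-v)\,\mathrm{d}v,\]
whose solution, by comparison with \eqref{eq: R_ij linear term equation}, is $L_j(u)=\sum_i c_i R_{ij}^{\bm Q}(u)$. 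Integrating this against $\lambda_{j,\infty}$ and summing over $j$ recovers the stated first moment.

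At order $(1-z)^\gamma$, irreducibility forces $\bar\gamma_i=\gamma$ and $I_i=I$ for every $i\in[d]$, so only edges with $\gamma_{mj}=\gamma$ contribute to the leading asymptotics, yielding the renewal equation
\begin{align*}
F_j(u)=\sum_{m=1}^d\ee[B_{mj}]\int_0^u g_{mj}(v)F_m(u-v)\,\mathrm{d}v+\omega\!\sum_{m:\gamma_{mj}=\gamma}\!C_{mj}\Big(\!\int_0^u g_{mj}(v)L_m(u-v)\,\mathrm{d}v\Big)^{\gamma},
\end{align*}
with $\omega=\Gamma(1-\gamma)$. Invoking \eqref{eq: R_ij^gamma fractional term equation} to rewrite the inhomogeneous term in terms of the scalar-case objects $R_{ij}^{\bm Q,\gamma}$, I would identify the aggregate $\sum_j\lambda_{j,\infty}\int_0^t F_j(u)\,\mathrm{d}u$ with $\sum_i c_i^\gamma\sum_{j\in I}\lambda_{j,\infty}\int_0^t R_{ij}^{\bm Q,\gamma}(u)\,\mathrm{d}u$. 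Substituting the expansion back into the product-exponential representation and Taylor expanding to first order then yields the claimed asymptotic relation, and the APT conclusion $\langle\bm c,\bm Q(t)\rangle\in\mathrm{APT}(\hat C,\gamma)$ follows from \cite[Thm.~8.1.6]{BGT87}.

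The hardest part is the identification of the aggregated $(1-z)^\gamma$ coefficient: the source term in the renewal equation for $F_j$ naturally produces $\big(\sum_i c_i\int g_{mj}R_{im}^{\bm Q}\big)^{\gamma}$ rather than the $\sum_i c_i^\gamma\big(\int g_{mj}R_{im}^{\bm Q}\big)^{\gamma}$ appearing in the target formula, and these are pointwise unequal. The decomposition must therefore be argued through a single-big-jump mechanism at the level of the renewal kernel: under irreducibility, each heavy-tailed edge generates an extreme realization that, upon propagation, effectively charges a distinguished target component $i$ with the appropriate $c_i^\gamma$ weight, so that after aggregation over $j$ and integration over $u$ the cross-target terms become asymptotically negligible and only the scalar-case contributions $R_{ij}^{\bm Q,\gamma}$ survive. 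Making this heuristic rigorous---by exploiting the common propagation index set $I$ across targets guaranteed by irreducibility and carefully reorganizing the aggregated renewal equations---is the crux of the proof.
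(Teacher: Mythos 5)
Your route is the same as the one the paper intends (and sketches): substitute $s_i=0$, $z_i=z^{c_i}$ into Theorem~\ref{thm: joint transform zeta characterization}, use the fixed point of Theorem~\ref{thm: fixed point theorem} for $\eta^{\bm c}_j(u,z)=\ee[z^{\sum_i c_iS^{\bm Q}_{i\leftarrow j}(u)}]$, and match orders $(1-z)$ and $(1-z)^\gamma$. Your linear-order step is correct: $L_j=\sum_i c_iR^{\bm Q}_{ij}$ solves the renewal system obtained by summing $c_i$ times \eqref{eq: R_ij linear term equation}, and the first-moment formula follows.

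At the fractional order, however, there is a genuine gap, and you have in fact located it yourself. Your renewal system for $F_j$ has inhomogeneous term proportional to $\bigl(\sum_i c_i\,(g_{mj}*R^{\bm Q}_{im})(u)\bigr)^{\gamma}$, while the target constant requires $\sum_i c_i^{\gamma}\,\bigl((g_{mj}*R^{\bm Q}_{im})(u)\bigr)^{\gamma}$. Since the solution of a renewal equation is linear in its source term and $\gamma\in(1,2)$, these two sources produce solutions that differ by a quantity of \emph{exactly} the order $(1-z)^{\gamma}$ you are computing; the discrepancy is not a higher-order effect that can be discarded. Moreover, the single-big-jump mechanism you invoke to dismiss the cross-target terms points the wrong way: under irreducibility one extreme realization of some $B_{mj}$ spawns a macroscopic sub-cluster that feeds \emph{all} components simultaneously, so its contributions to the several $c_iQ_i(t)$ are summed \emph{inside} the $\gamma$-th power, which is precisely the $\bigl(\sum_i c_i(\cdot)\bigr)^{\gamma}$ form your expansion produced, not the $\sum_i c_i^{\gamma}(\cdot)^{\gamma}$ form of the displayed constant (for $\bm c$ with two positive entries these are strictly ordered, e.g.\ $(a_1+a_2)^{\gamma}>a_1^{\gamma}+a_2^{\gamma}$). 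Consequently the final identification step cannot be "made rigorous" as proposed. What your argument does legitimately deliver is $\langle\bm c,\bm Q(t)\rangle\in{\rm APT}(\hat C,\gamma)$ with $\hat C$ determined by the solution of \emph{your} system for $F_j$ (which reduces to the stated constant when $\bm c$ is a coordinate vector or $d=1$); to claim the constant as displayed in \eqref{eq: lin cominbation queue heavy tailed thm statement} you would need an additional argument reconciling the two source terms, which the proposal does not supply.
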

\begin{proof}
The proof follows the lines of the proof of Theorem \ref{thm: Q_i and lambda_i heavy tailed}; we therefore omit the details.
One has to properly take  care of the multivariate setting when taking expansions and equating linear and fractional order terms.
Due to irreducibility, the term of order $\gamma$ effectively propagates throughout the system.
\end{proof}

Proposition \ref{cor: heavy tail linear combination queue} can be used to determine the asymptotic tail behavior of $\langle \bm{c},\bm{Q}(t)\rangle$ analogous to Corollary \ref{cor: tail probability Q_i and lamda_i}:
under the assumptions of Proposition \ref{cor: heavy tail linear combination queue}, as $x \to \infty$, we have that
\begin{align}
\label{eq: tail prob linear combination queue}
    {\mathbb P}( \langle \bm{c},\bm{Q}(t)\rangle > x) \sim \left(\sum_{i=1}^d c_i^\gamma \sum_{j\in I}\lambda_{j,\infty}\int_0^t \bar R^{\bm{Q},\gamma}_{ij}(u)\,\mathrm{d}u\right)x^{-\gamma}.
\end{align}

\section{Numerical Examples}\label{sec:Numerics}

This section provides a collection of numerical examples to illustrate the results derived in the previous sections.
All the numerical computations in this section are conducted in {\tt Python} and the computer code is available from the authors upon request.
The first part of this section focuses on the exact analysis.
The characterization of the joint transform in Theorem~\ref{thm: joint transform zeta characterization}
and the fixed-point representation and convergence results in Theorems~\ref{thm: fixed point theorem}--\ref{thm: convergence of phi iteration}
enable us to numerically compute arbitrary joint moments of $N_i(t)$, $Q_i(t)$ and $\lambda_i(t)$, for any $i\in[d]$ and $t\in\rr_+$,
using standard numerical techniques.
The second part of this section considers the asymptotic analysis.
The characterization of the heavy-tailed asymptotic behavior of $Q_i(t)$ and $\lambda_i(t)$ in Theorem~\ref{thm: Q_i and lambda_i heavy tailed}
allows us to numerically evaluate their tail probabilities.
For both the exact and asymptotic analyses, we compare our numerically evaluated results to Monte Carlo simulated counterparts.
Our simulation procedure is based on Ogata's \cite{O81} thinning algorithm (see also \cite[Algorithm 1.21]{L09} for details),
which in our general multivariate setting essentially relies on the cluster representation in Definition \ref{def: hawkes cluster}.
Once a sample path of $\bm{N}(t)$ has been simulated for some $t\in\rr_+$, one can easily obtain the corresponding $\bm{\lambda}(t)$ and $\bm{Q}(t)$.
The simulation procedure is, in principle, elementary, but can be highly time consuming when $t\in\rr_+$ is large, in the presence of heavy tails,
or when a high level of precision is pursued.
By contrast, the numerical evaluation of our exact and asymptotic results is nearly instantaneous.

Because our results allow for general decay functions $g_{ij}(\cdot)$, we illustrate two examples of parametrizations, namely exponential and power-law decay.
Throughout this section, we consider the bivariate case $d=2$.
In the instances considered, we let the stability condition be satisfied,
i.e., the spectral radius of the matrix $\lVert \bm{H} \rVert = ( \lVert h_{ij}\rVert)_{i,j\in[2]}$ is smaller than $1$, which results in the condition
\begin{align}
\label{eq: stability condition bivariate general}
     (1-\norm{h_{11}})(1-\norm{h_{22}}) > \norm{h_{12}}\norm{h_{21}}.
\end{align}
The stability condition (\ref{eq: stability condition bivariate general}) implies that the processes $N_i(t)$, $Q_i(t)$ and $\lambda_i(t)$ converge to a steady state as $t$ grows.
%We note that the stability condition is strictly speaking not required; however, in line with our previous results, we assume it holds. We will encounter this when we consider the moments.

%\begin{example}[Exponential decay]
%Assume $g_{ij}(t) = e^{-\alpha_{ij}t}$ for some $\alpha_{ij}>0$, with $i,j\in[2]$. Then we %have $\norm{h_{ij}} = \ee[B_{ij}]/\alpha_{ij}$ and Eqn. (\ref{eq: stability condition %bivariate general}) reads
%\begin{align}
%\label{eq: stability condition bivariate exponential}
%    (\alpha_{11} - \ee[B_{11}])(\alpha_{22} - \ee[B_{22}]) > %\frac{\ee[B_{12}]\ee[B_{21}]}{\alpha_{12}\alpha_{21}}.
%\end{align}
%\end{example}
%
%\begin{example}[Power-law decay]
%Assume $g_{ij}(t) = (c_{ij} + t)^{-p_{ij}}$ for some $c_{ij} \geqslant 0$ amd $p_{ij} > 1$, %with $i,j\in[d]$. Then $\norm{h_{ij}} = \ee[B_{ij}]c_{ij}^{1-p_{ij}}/(p_{ij}-1)$ and Eqn. %(\ref{eq: stability condition bivariate general}) reads
%\begin{align}
%\label{eq: stability condition bivariate power-law}
%    CHECK
%\end{align}
%\end{example}

\subsection{Exact analysis}
We focus attention on the processes $\bm{Q}(t)=(Q_1(t),Q_2(t))$ and $\bm{\lambda}(t) = (\lambda_1(t),\lambda_2(t))$
and illustrate the joint transform characterization of $(\bm{Q}(t),\bm{\lambda}(t))$ under both exponential and power-law decay.
More specifically, by means of a standard finite difference method, we approximate the derivatives of the joint transform $\cJ_{\bm{Q},\bm{\lambda}}(t)$
obtained by relying on the fixed-point characterization of Theorems~\ref{thm: joint transform zeta characterization}--\ref{thm: fixed point theorem},
so as to numerically evaluate arbitrary joint moments. % $\ee[Q_i(t)]$ and $\ee[\lambda_i(t)]$, with $i\in[2]$.
%As this expression contains the joint transform of the cluster processes $\cJ_{\bm{S}_j^{\bm{Q}},\bm{S}_j^{\bm{\lambda}}}(u)$, with $j\in[2]$, we exploit the fixed point relation by iterating a number of times so as to get close to the true distribution of the cluster processes.
We focus on moments corresponding to the first components, i.e.,
\[\ee[Q_1(t)] =\ee[Q_1(t)\, |\, Q_1(0) = 0],\:\:\:\:\:\ee[\lambda_1(t)] = \ee[\lambda_1(t) \, | \, \lambda_1(0) = \lambda_{1,\infty}],\]
and the associated variances, as well as the joint moments $\ee[Q_1(t)Q_2(t)]$ and $\ee[Q_1(t)\lambda_1(t)]$.
Recall that the departures of events in $Q_i(t)$ are governed by the non-negative random variable $J_i$,
which we assume to be exponentially distributed with parameter $\mu_i > 0$.

\subsubsection{Exponential}
We let the decay functions $g_{ij}(\cdot)$, with $i,j\in[2]$, be of exponential form.
In our bivariate setting, we assume
\begin{align}
\label{eq: numerics - exp decay bivariate}
    g_{11}(t) = g_{12}(t) = e^{-\alpha_1 t}, \hspace{1.2cm} g_{21}(t)= g_{22}(t) = e^{-\alpha_2 t},
\end{align}
for $\alpha_1,\alpha_2 >0$.
Further, in this example we take the random variables $B_{ij}$ to be positive constants, i.e., $B_{ij} \equiv b_{ij} \in \rr_+$.
This yields, for $i,j\in[2]$, that $\norm{h_{ij}} = b_{ij}/\alpha_i$,
and we will select parameters such that the bivariate stability condition in Eqn.\ (\ref{eq: stability condition bivariate general}) is satisfied.

\begin{figure}
\includegraphics[width=0.49\textwidth]{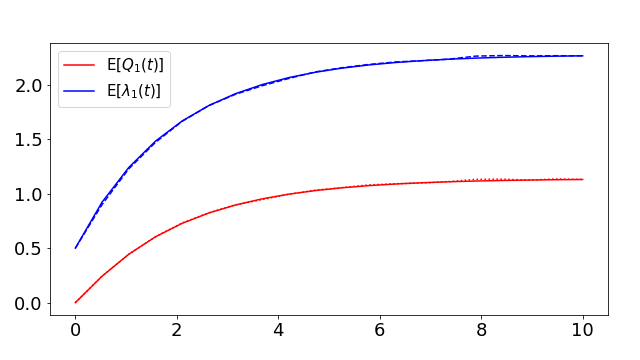}
\includegraphics[width=0.49\textwidth]{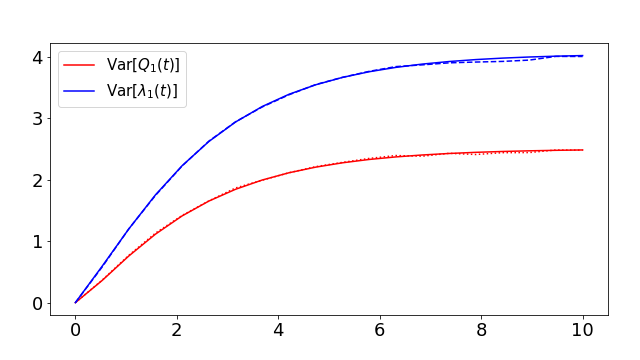}
\includegraphics[width=0.49\textwidth]{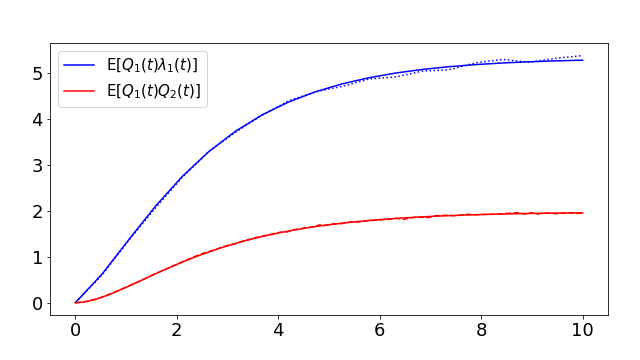}
\caption{\small \textit{Plots of the expectations and variances of $Q_1(\cdot)$ and $\lambda_1(\cdot)$ and the joint moments $\ee[Q_1(t)\lambda_1(t)]$ and $\ee[Q_1(t)Q_2(t)]$ (solid lines),
compared to Monte Carlo simulated averages (dashed lines), in the bivariate model $(d=2)$ under exponential decay, with $t\in[0,10]$.
Parameters: $\lambda_{1,\infty} = \lambda_{2,\infty} = 0.5$, $\mu_1 = \mu_2 =2$,
$\alpha_{11}=\alpha_{12} = 2.3$, $\alpha_{21} = \alpha_{22} = 2$, $B_{11} \equiv 1.3$, $B_{12} \equiv 0.6$, $B_{21} \equiv 0.8$, $B_{22} \equiv 0.5$.}}
\label{fig: exponential exact moments}
\end{figure}

In Figure~\ref{fig: exponential exact moments}, we plot the expectations $\ee[Q_1(t)]$ and $\ee[\lambda_1(t)]$,
the variances $\text{Var}[Q_1(t)]$ and $\text{Var}[\lambda_1(t)]$,
and the joint moments $\ee[Q_1(t)\lambda_1(t)]$ and $\ee[Q_1(t)Q_2(t)]$,
obtained from the fixed-point characterization.
These are plotted against their Monte Carlo simulated counterparts based on $M=10^5$ runs, for $t\in[0,10]$.
In all cases, the numerical evaluation of our exact results closely matches the Monte Carlo simulated counterparts.
%We observe a highly accurate fit.
Around $t=6$, the effect of the initial state has vanished, in that the processes enter the stationary regime.
We note that the CPU time associated with the exact results is negligible compared to the CPU time required for the simulation results.
Furthermore, for large $t$, the exact results remain smooth while the simulation results become a little rougher.

\subsubsection{Power}
We next let the decay functions $g_{ij}(\cdot)$, with $i,j\in[2]$, be of power-law type.
In particular, we take
\begin{align}
\label{eq: numerics - power law decay bivariate}
    g_{11}(t) = g_{12}(t) = \frac{1}{(c_1 + t)^{p_1}}, \hspace{1.2cm} g_{21}(t)= g_{22}(t) =  \frac{1}{(c_2 + t)^{p_2}},
\end{align}
where $c_1,c_2 >0$ and $p_1,p_2>1$ to ensure integrability.
Again taking $B_{ij} \equiv b_{ij} \in \rr_+$, we now have $\norm{h_{ij}} = b_{ij}c_i^{1-p_i}(p_i -1)^{-1}$.
As before, we choose the parameters such that the bivariate stability condition in Eqn.\ (\ref{eq: stability condition bivariate general}) is satisfied.
%Note that $g_{ij}(0) = c_i^{-p_i}$ as opposed to $g_{ij}(0) = 1$ in the exponential case; as such, the jumps of intensity $\lambda_i(\cdot)$ are thus $b_{ij} c_i^{-p_i}$.

\begin{figure}
\includegraphics[width=0.49\textwidth]{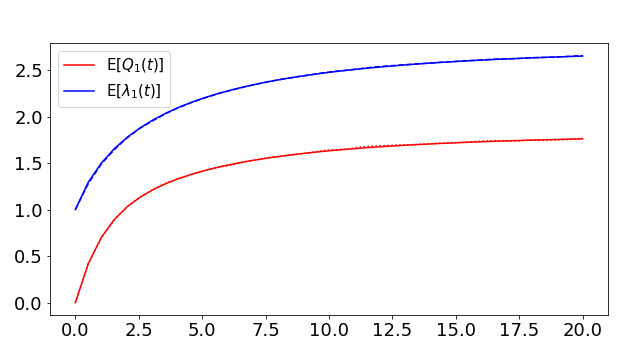}
\includegraphics[width=0.49\textwidth]{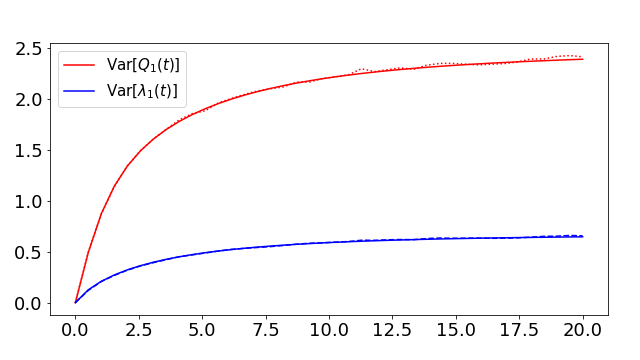}
\includegraphics[width=0.49\textwidth]{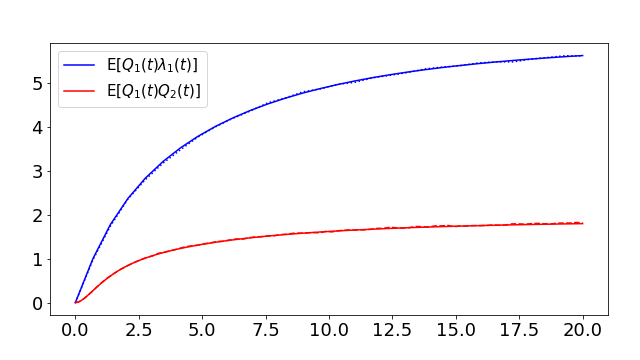}
\caption{\small \textit{Plots of the expectations and variances of $Q_1(\cdot)$ and $\lambda_1(\cdot)$ and the joint moments $\ee[Q_1(t)\lambda_1(t)]$ and $\ee[Q_1(t)Q_2(t)]$ (solid lines),
compared to Monte Carlo simulated averages (dashed lines), in the bivariate model $(d=2)$ under power-law decay, with $t\in[0,20]$.
Parameters: $\lambda_{1,\infty} = \lambda_{2,\infty} = 1$, $\mu_1 = \mu_2 =1.5$, $c_1 = 1.5$, $c_2 = 2$, $p_1 = 2.5$, $p_2 = 3$, $B_{11} \equiv 1.5$, $B_{12} \equiv 0.5$, $B_{21} \equiv 1$, $B_{22} \equiv 0.5$.}}
\label{fig: PL exact moments}
\end{figure}

Figure~\ref{fig: PL exact moments} displays the expectations $\ee[Q_1(t)]$ and $\ee[\lambda_1(t)]$,
the variances $\text{Var}[Q_1(t)]$ and $\text{Var}[\lambda_1(t)]$,
as well as the joint moments $\ee[Q_1(t)\lambda_1(t)]$ and $\ee[Q_1(t)Q_2(t)]$,
plotted against their Monte Carlo simulated counterparts.
We observe again a highly accurate match.
We note that, compared to the case of exponential decay, it now takes longer for the processes to reach the stationary regime.
This is due to the fatter tails of power-law decay: heuristically, the processes now `have more memory'.

\subsection{Asymptotic analysis}
We proceed by numerically illustrating our asymptotic results on the tail probabilities of $N_i(t)$, %and $\lambda_i(t)$,
as established in Corollary~\ref{cor: tail probability Q_i and lamda_i}.
In particular, we compute the $\bar R_{ij}^{\bm{N},\delta}(\cdot)$,% and $\bar R_{ij}^{\bm{\lambda},\delta}(\cdot)$
function appearing in Corollary~\ref{cor: tail probability Q_i and lamda_i} and the corresponding tail probability approximations,
and compare these to tail probabilities estimated using Monte Carlo simulation.
%Recall that all results concerning $Q_i(t)$ reduce to $N_i(t)$ upon taking $\mu_i = 0$.

In our specific bivariate example, we consider only one direction of cross-excitation and one heavy-tailed random variable.
More precisely, we set $B_{11} \equiv 0$, $B_{21} \equiv 0$, $B_{12} = 1$ and assume $B_{22} \in \text{APT}(1,\gamma)$ with $\gamma=1.8$,
such that this system can be represented by the following Hawkes graph:
\begin{center}
\begin{tikzpicture}[->,thick,node distance = 3cm, roundnode/.style={circle, draw= black, thick, minimum size = 7mm}]
\node[roundnode] (N1) at (0,0) {$N_1$};
\node[roundnode][right of =Q1] (N2) {$N_2$};
\draw[black] (N2) to [out=150,in=30] node[above]{$e_{12}$} (N1);
%\draw[black] (N1) to [out=150,in = 210,looseness = 7] node[left]{$e_{11}$}  (N1);
\draw[black] (N2) to [out=30,in = 330,looseness = 7] node[right]{$e_{22}$} (N2);
\end{tikzpicture}
\end{center}
The index $\gamma$ of the heavy-tailed random variable $B_{22}$ is inherited by both $N_1(\cdot)$ and $N_2(\cdot)$.
In this setting, we compute the functions $R_{ij}^{\bm{N}}(\cdot)$ and $\bar{R}_{ij}^{{\bm{N}},\gamma}(\cdot)$, with $i,j\in[2]$,
using the system of equations of their Laplace-Stieltjes transforms given in Eqns.~\eqref{eq: Laplace-Stieltjes transform equations R_ij^Q},
\eqref{eq: Laplace-Stieltjes transform equations R_ij^Q,gamma} and \eqref{eq:barR}.
By solving this linear system and inverting back, we obtain $R_{ij}^{\bm{N}}(\cdot)$ at time $u$ by
\begin{align}
\label{eq: bivariate system of R_ij^N equations}
\begin{split}
%   R_{11}^{\bm{N}}(u) &= \cL^{-1} \Big\{\frac{\cL\{1\}(\cdot)}{1-\ee[B_{11}]\cL\{g_{11}\}(\cdot)}\Big\}(u) \notag\\
    R_{12}^{\bm{N}}(u) &= \ee[B_{12}] \cL^{-1}\Big\{ \frac{\cL\{G_{12}\}(\cdot)}{1-\ee[B_{22}]\cL\{g_{22}\}(\cdot)}\Big\}(u), \\
    R_{22}^{\bm{N}}(u) &=\cL^{-1} \Big\{\frac{\cL\{1\}(\cdot)}{1-\ee[B_{22}]\cL\{g_{22}\}(\cdot)}\Big\}(u),
\end{split}
\end{align}
where $G_{12}(u)=\int_0^u g_{12}(v)\mathrm{d}v$.
Note that $R^{\bm{N}}_{11}(\cdot) \equiv 1$ by Eqn.\ (\ref{eq: R_ij linear term equation})
since $B_{11} \equiv 0$, and $R^{\bm{N}}_{21}(\cdot) \equiv 0$ by Lemma \ref{lem: equivalence R_ij > 0 and path existence} as $P_{2\leftarrow 1} =0$.
In a similar manner, we obtain $\bar R_{ij}^{{\bm{N}},\gamma}(\cdot)$ at time $u$ by
\begin{align}
\label{eq: bivariate system of R_ij^N gamma equations}
\begin{split}
    \bar R_{12}^{\bm{N},\gamma}(u) &= \cL^{-1} \Big\{ \frac{\cL\big\{ (g_{22} * \bar R_{12}^{\bm{N}})^{\gamma}\big\}(\cdot)}{1-\ee[B_{22}]\cL\{g_{22}\}(\cdot)}\Big\}(u),\\
    \bar R_{22}^{\bm{N},\gamma}(u) &= \cL^{-1} \Big\{ \frac{\cL\big\{ (g_{22} * \bar R_{22}^{\bm{N}})^{\gamma}\big\}(\cdot)}{1-\ee[B_{22}]\cL\{g_{22}\}(\cdot)}\Big\}(u).
\end{split}
\end{align}
This allows us to compute the analytical expressions that appear in Corollary~\ref{cor: tail probability Q_i and lamda_i},
which in our bivariate setting are $\int_0^t \bar R_{12}^{\bm{N},\gamma}(u)\mathrm{d}u$ and $\int_0^t \bar R_{22}^{\bm{N},\gamma}(u)\mathrm{d}u$
for components $N_1(t)$ and $N_2(t)$, respectively.
We next discuss two parametrizations of the decay functions to compute these terms explicitly.

\subsubsection{Exponential}
We choose our decay functions as in Eqn.\ (\ref{eq: numerics - exp decay bivariate}),
and select the remaining parameters such that the stability condition (\ref{eq: stability condition bivariate general}) is satisfied.
In Figure~\ref{fig: exponential tail probabilities}, we plot the analytical expressions from Corollary~\ref{cor: tail probability Q_i and lamda_i}
against the Monte Carlo simulation-based approximations of the tail probabilities $\pp(N_1(t)>x)$ and $\pp(N_2(t)>x)$ as $x$ grows, for fixed $t=1$.
The simulations are performed by sampling $M=2\cdot10^6$ runs of $\bm{N}(1)=(N_1(1),N_2(1))$ and counting the proportion of these runs that lead to values larger than any given threshold $x$.
The number of simulation runs is chosen sufficiently large to obtain reasonable estimates of small tail probabilities.
As expected, we see that as $x$ grows, the simulation approximations of the tail probabilities converge toward the analytical asymptotic expressions.
%Note that the results of Section \ref{sec:TailProbs} hold for any $t\in\rr_+$; we choose $t=1$ to limit the required computational effort, as the presence of heavy tailed random variables increases the computation time considerably.

\begin{figure}
\includegraphics[scale=.5]{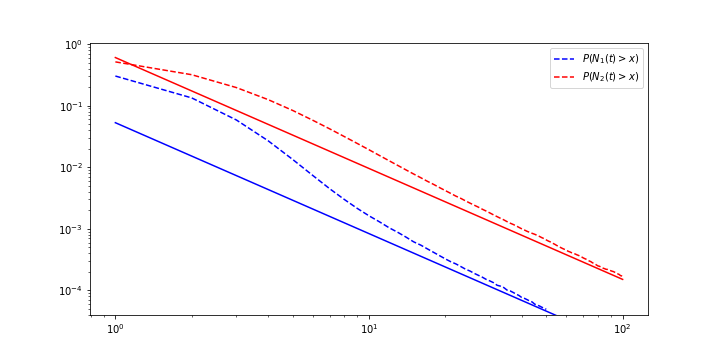}
\caption{\small \textit{Plots of the tail probabilities $\pp(N_1(t)>x)$ and $\pp(N_2(t)>x)$ for fixed $t=1$ in the bivariate model ($d=2$)
under exponential decay, using the analytical expressions appearing in Corollary~\ref{cor: tail probability Q_i and lamda_i} (solid lines),
compared to Monte Carlo simulated approximations (dashed lines).
Parameters: $\lambda_{1,\infty} = 0.5$, $\lambda_{2,\infty} = 1.5$, $\alpha_{11}=\alpha_{12}= \alpha_{21} = \alpha_{22} = 1.5$, $B_{11} \equiv 0$, $B_{12} \equiv 1$, $B_{21} \equiv 0$, $B_{22}\in\text{\rm APT}(1,1.8)$.}}
\label{fig: exponential tail probabilities}
\end{figure}

\subsubsection{Power}
We now choose our decay functions as in Eqn.\ \eqref{eq: numerics - power law decay bivariate},
and the remaining parameters such that the stability condition \eqref{eq: stability condition bivariate general} is satisfied.
We substitute the decay functions into \eqref{eq: bivariate system of R_ij^N equations} and \eqref{eq: bivariate system of R_ij^N gamma equations},
and compute the analytical expressions appearing in Corollary~\ref{cor: tail probability Q_i and lamda_i}.
Figure~\ref{fig: power law tail probabilities} is the counterpart of Figure~\ref{fig: exponential tail probabilities}, but now for power-law decay.
We observe again that both Monte Carlo simulated approximations converge to the analytical asymptotic expressions.
%We observe that both simulation approximations converge to the analytic expressions, even faster than in the case of exponential decay, which is entirely due to the choice of parameters.

\begin{figure}
\includegraphics[scale=.5]{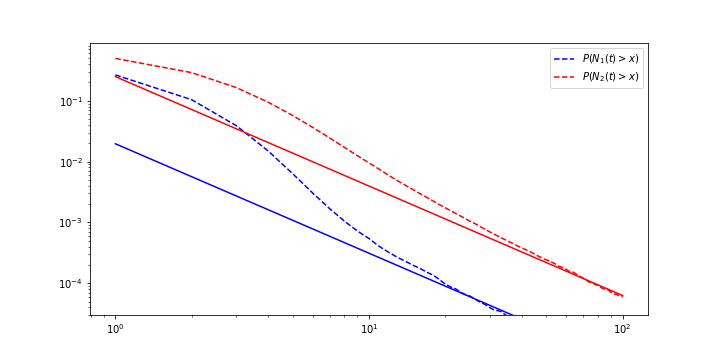}
\caption{\small \textit{Plots of the tail probabilities $\pp(N_1(t)>x)$ and $\pp(N_2(t)>x)$ for fixed $t=1$ in the bivariate model ($d=2$)
under power-law decay, using the analytical expressions appearing in Corollary~\ref{cor: tail probability Q_i and lamda_i} (solid lines),
compared to Monte Carlo simulated approximations (dashed lines).
Parameters: $\lambda_{1,\infty} = 0.5$, $\lambda_{2,\infty} = 1.5$, $c_1=c_2=1$, $p_1=2.5$, $p_2=3.5$, $B_{11} \equiv 0$, $B_{12} \equiv 1$, $B_{21} \equiv 0$, $B_{22}\in\text{\rm APT}(1,1.8)$.}}
\label{fig: power law tail probabilities}
\end{figure}

\section{Conclusions}\label{sec:Conclusion}

This paper has focused on multivariate Hawkes processes and associated population processes, establishing both exact and asymptotic results.
Importantly, we allow our model to be non-Markovian, in that the processes' decay functions can be chosen generally.
We have characterized their joint transform via a fixed-point theorem, by exploiting suitable extensions of the existing results on cluster representations for Hawkes processes and associated distributional equalities induced by the underlying branching structure.
For the case that the intensity jumps are heavy-tailed, we have also succeeded in determining the corresponding asymptotic tail behavior.

A (partially) more general setting than that analyzed in our paper is provided by the extensions of Hawkes processes introduced in \cite{BM96},
referred to as \textit{non-linear} Hawkes processes, and versatilely analyzed further in \cite{T16,Z13,Z15}.
Then, the form of the intensity process is governed by a general auxiliary function, rather than the additive structure~\eqref{eq: intensity lambda def}.
Importantly, the cluster representation applies to the additive case only.
As a consequence, our results cannot be readily generalized to the non-linear case, which will require an intrinsically different approach.

\newpage

\appendix

\section{Additional Proofs}
\label{section: proofs}

\begin{proof}[Proof of Lemma~\ref{lem: phi well defined}]
We start by considering a pair of matrix-valued processes $(\bm{X}(\cdot),\bm{Y}(\cdot)) = \big(X_{ij}(\cdot),Y_{ij}(\cdot)\big)_{i,j\in[d]}$
and denote by $\bm{X}_{(i)}(\cdot)$ and $\bm{Y}_{(i)}(\cdot)$ the $i$-th rows of the respective matrices.
We define a mapping $\Phi$ acting on the space of such pairs of processes at time $u$ by
\begin{align}
\label{eq: appendix mapping X,Y to distributional eq A(X,Y)}
    \big(X_{ij}(u),Y_{ij}(u)\big)_{i,j\in[d]}
    &\mapsto \Phi\big((X_{ij}(\cdot),Y_{ij}(\cdot))_{i,j\in[d]}\big)(u) \equiv \Phi\big(\bm{X},\bm{Y}\big)(u),\notag \\
    &=: \big(\cA_j\{\bm{1}_{\{i=j\}}\bm{1}_{\{J_i>u\}},\bm{X}_{(i)}(\cdot)\}(u),
    \, \cA_j\{B_{ij}g_{ij}(u),\bm{Y}_{(i)}(\cdot)\}(u)\big)_{i,j\in[d]},
\end{align}
where $\cA_j$ is as defined in Eqn.\ (\ref{eq: def linear functional A_ij branching structure first gen}).
We note that the above mapping could be equivalently expressed in terms of distribution functions of $\bm{X}(\cdot)$ and $\bm{Y}(\cdot)$. %, as was done in \cite{AW92,KSBM18}.
%Since we considered time dependent joint transforms in Definition \ref{def: time dependent joint transform space} on the process level, we defined the above mapping accordingly.
Now observe that the mapping in Eqn.\ (\ref{eq: appendix mapping X,Y to distributional eq A(X,Y)})
is the probabilistic analogue of the mapping $\phi$ given in Eqn.\ (\ref{eq: phi_j definition mapping}).
We can verify this, as the time-dependent joint transform $\bm{\cJ}_{\Phi(\bm{X},\bm{Y})}(\cdot)$ is, at time $u$, due to Definition \ref{def: time dependent joint transform space},
explicitly given by
\begin{align}
    \bm{\cJ}_{\Phi(\bm{X},\bm{Y})}(u)
    = \begin{bmatrix}
    \cJ_{\Phi_1(\bm{X},\bm{Y})}(u) \\
    \vdots \\
    \cJ_{\Phi_d(\bm{X},\bm{Y)}}(u)
    \end{bmatrix},
\end{align}
where each entry, for $j\in[d]$ corresponding to the $j$-th column $\Phi_j(\bm{X},\bm{Y})(\cdot)$, is given by
\begin{align}
\begin{split}
\label{eq: appendix joint transform A(X,Y)}
    \cJ_{\Phi_j(\bm{X},\bm{Y})}(u)
    &\equiv \cJ_{\Phi_j(\bm{X},\bm{Y})}(u,\bm{s},\bm{z}) \\
    &=\ee\Big[ \prod_{i=1}^d z_i^{\cA_j\{\bm{1}_{\{i=j\}}\bm{1}_{\{J_i>u\}},\,\bm{X}_{(i)}(\cdot)\}(u)}
    e^{-s_i\cA_j\{B_{ij}g_{ij}(u),\,\bm{Y}_{(i)}(\cdot)\}(u)}\Big].
\end{split}
\end{align}
Mimicking the steps of the proof of Theorem~\ref{thm: fixed point theorem}, we derive that the right-hand side of Eqn.\ (\ref{eq: appendix joint transform A(X,Y)}) equals $\phi_j(\bm{\cJ}_{\bm{X},\bm{Y}})(u)$.
Therefore, the joint transform of $\Phi_j(\bm{X},\bm{Y})(\cdot)$ satisfies $\cJ_{\Phi_j(\bm{X},\bm{Y})}(\cdot) = \phi_j(\bm{\cJ}_{\bm{X},\bm{Y}})(\cdot) \in \jj$.
Since this holds for every entry $j\in[d]$, we have that $\phi(\bm{\cJ}_{\bm{X},\bm{Y}})(\cdot) \in \jj^d$. %, as it is the time dependent joint transform of the process $\Phi(\bm{X},\bm{Y})(\cdot)$.
\end{proof}

\begin{proof}[Proof of Lemma~\ref{lem: phi continuous and bounds}]
Let $\bm{\cJ}(\cdot),\widetilde{\bm{\cJ}}(\cdot) \in \jj^d$ and $\epsilon > 0$. We will show that for a certain choice of $\delta > 0$, we have
\begin{align*}
    \jjdnorm{\bm{\cJ} - \widetilde{\bm{\cJ}}} < \delta \implies \jjdnorm{\phi(\bm{\cJ})-\phi(\widetilde{\bm{\cJ}})} < \epsilon.
\end{align*}
It suffices to prove continuity in each entry separately.
Considering the $j$-th entry of $\phi$, which is the mapping $\phi_j$ defined in (\ref{eq: phi_j definition mapping}), observe that it can be rewritten as
\begin{align*}
    &\phi_j(\cJ_1,\dots,\cJ_d)(u,\bm{s},\bm{z}) =\ee\Big[z_j^{\bm{1}_{\{J_j>u\}}}\Big]   \\
    &\quad \times \prod_{m=1}^d \ee\Big[ \exp\Big(-B_{mj}\big( s_m g_{mj}(u) +  \int_0^u g_{mj}(v)\big(1- \cJ_m(u-v,\bm{s},\bm{z}))\mathrm{d}v\big)\Big)\Big].
\end{align*}
We then have
\begin{align}
\label{eq: phi first bound}
    &\jjnorm{\phi_j(\bm{\cJ}) - \phi_j(\widetilde{\bm{\cJ}})}^2\\
    &=\sup_{u,\bm{s},\bm{z}} |\phi_j(\bm{\cJ})(u,\bm{s},\bm{z}) - \phi_j(\widetilde{\bm{\cJ}})(u,\bm{s},\bm{z})|^2  \notag\\
    \begin{split}
    &\leqslant \sup_{u,\bm{s},\bm{z}}\Big| \ee\Big[ \exp\Big(-\sum_{m=1}^d B_{mj}\big( s_m g_{mj}(u) + \int_0^u g_{mj}(v)\big( \cJ_m(u-v,\bm{s},\bm{z})-1)\mathrm{d}v\big)\Big) \\
    &\quad \quad - \exp\Big(-\sum_{m=1}^dB_{mj} \big(s_m g_{mj}(u) + \int_0^u g_{mj}(v)(\widetilde{\cJ}_m(u-v,\bm{s},\bm{z}) - 1)\mathrm{d}v\big)\Big)\Big] \Big|^2, \label{eq: first bound}
    \end{split}
\end{align}
where we have used that $|z_j|\leqslant 1$.
We then apply the mean value theorem to the difference of the exponential terms; for an exponential function, it states that $e^a - e^b = (a-b)e^c$ for some $c\in[a,b]$.
We have that
\begin{align*}
    -\sum_{m=1}^d{B_{mj}\big(s_m g_{mj}(u) + \int_0^u  g_{mj}(v)\big(1-\cJ_m(u-v,\bm{s},\bm{z})\big)\mathrm{d}v\big)}\leqslant 0,
\end{align*}
due to $\cJ_m(u-v,\bm{s},\bm{z})\leqslant 1$, $B_{mj}$ being non negative, $\bm{s} \in \rr_+^d$ and $g_{mj}(v) \geqslant 0$ by assumption. The same holds for the terms involving $\widetilde{\cJ}_m$.
%This implies that the value $c$ is between two negative values and thus the exponent is bounded by 1.
Hence, we can apply the mean value theorem with some $c\leqslant 0$.
We thus obtain the following upper bound on \eqref{eq: first bound}:
\begin{align}\label{eq:UB2}
    \sup_{u,\bm{s},\bm{z}}\Big| \sum_{m=1}^d\ee \Big[ B_{mj}\int_0^u g_{mj}(v)(\cJ_m(u-v,\bm{s},\bm{z}) - \widetilde{\cJ}_m(u-v,\bm{s},\bm{z}) )\mathrm{d}v \Big] \Big|^2,
\end{align}
since the $B_{mj}s_mg_{mj}(u)$ terms and the constants cancel. By an application of the triangle inequality we can bound this expression further. Indeed, \eqref{eq:UB2} is dominated by
\begin{align}\nonumber
    \lefteqn{ \sum_{m=1}^d \sup_{u,\bm{s},\bm{z}} \ee\Big| B_{mj}\int_0^u g_{mj}(v) \big(\cJ_m(u-v,\bm{s},\bm{z}) - \widetilde{\cJ}_m(u-v,\bm{s},\bm{z})\big) \mathrm{d}v \Big|^2 }\\
    &\leqslant \sum_{m=1}^d\ee[B_{mj}]^2\sup_{u,\bm{s},\bm{z}} \Big|\int_0^u g_{mj}(v)\big( \cJ_m(u-v,\bm{s},\bm{z}) - \widetilde{\cJ}_m(u-v,\bm{s},\bm{z})\big)\mathrm{d}v\Big|^2,\label{form}
\end{align}
where $\ee[B_{mj}]^2 < \infty$ by assumption. Finally, this term can be bounded by applying Young's inequality for convolutions, which yields that \eqref{form} is dominated by
\begin{align*}
    \lefteqn{\sum_{m=1}^d \ee[B_{mj}]^2 \sup_{u,\bm{s},\bm{z}} \Big|\int_0^u g_{mj}(v)\mathrm{d}s \int_0^u \big(\cJ_m(v,\bm{s},\bm{z}) - \widetilde{\cJ}_m(v,\bm{s},\bm{z})\big)\mathrm{d}v \Big|^2}\\
    &\leqslant d  \max_{m,j\in[d]}\ee[B_{mj}]^2 \Big|\int_0^t g_{mj}(v)\mathrm{d}v\Big|^2  \sup_{u,\bm{s},\bm{z}}\Big|\int_0^u\big(\cJ_m(v,\bm{s},\bm{z}) - \widetilde{\cJ}_m(v,\bm{s},\bm{z})\big)\mathrm{d}v\Big|^2 \\
    &\leqslant d  \max_{m,j\in[d]}\ee[B_{mj}]^2\lVert g_{mj} \rVert_{L^1(\rr_+)}^2 t \delta^{2},
\end{align*}
where $\lVert g_{mj} \rVert_{L^1(\rr_+)}^2 = (\int_0^\infty g_{mj}(v)]\mathrm{d}v)^2 < \infty$ and $|\cJ_m(v,\bm{s},\bm{z}) - \widetilde{\cJ}_m(v,\bm{s},\bm{z})| < \delta$ by assumption, in combination with the standard inequality $\sum_{m=1}^d a_i \leqslant d\max_{i\in[d]}\{a_i\}$ for real numbers $a_i$. Hence, choosing $\delta$ as
\begin{align*}
    \delta^2 = \frac{\epsilon}{td^2\max\limits_{m,j\in[d]}\ee[B_{mj}]^2 \lVert g_{mj} \rVert_{L^1(\rr_+)}^2},
\end{align*}
implies that (\ref{eq: phi first bound}) becomes $\jjnorm{\phi_j(\bm{\cJ}) - \phi_j(\tilde{\bm{\cJ}})}^2 \leqslant \epsilon^2$.
Doing this for all $j\in[d]$ yields the result.

Of course, for the proof of Lemma~\ref{lem: phi continuous and bounds} to work, we need $t>0$, $\ee[B_{mj}] > 0$ and $g_{mj} \neq 0$ for at least some combination of $m,j\in[d]$.
However, it is clear that choosing all of these to be $0$ yields an irrelevant model.
\end{proof}

\begin{proof}[Proof of Lemma~\ref{lem: equivalence R_ij > 0 and path existence}]
$(i) \Rightarrow (iii)$. We have that $\ee[S^{\bm{Q}}_{i\leftarrow j}(u)]>0$ implies that  $\pp(S^{\bm{Q}}_{i\leftarrow j}(u)>0)$ is a positive probability, implying that a path must exist from component $j$ to $i$, which proves the result immediately.

\noindent
$(iii) \Rightarrow (i)$. Using the distributional equality of $S^{\bm{Q}}_{i\leftarrow j}(u)$, as given in (\ref{eq: distributional equality S_ij}), we have
\begin{align}
\label{eq: equality Expectation S_ij}
    \ee[S^{\bm{Q}}_{i\leftarrow j}(u)] = \bm{1}_{\{i=j\}} \pp(J_i>u) + \sum_{m=1}^d \ee[B_{mj}] \int_0^u g_{mj}(v)\ee[S^{\bm{Q}}_{i\leftarrow  m}(u-v)]\mathrm{d}v.
\end{align}
If $P_{i\leftarrow j} =1$, then the path must start with an edge $e_{kj}$, for some $k\in[d]$ and so $\ee[B_{kj}] > 0$ by definition.
If $k=i$, then $\ee[B_{ij}]>0$ and so this direct link implies $\ee[S^{\bm{Q}}_{i\leftarrow j}(u)]>0$.
If $k\neq i$, then we apply Eqn.\ (\ref{eq: equality Expectation S_ij}) to $\ee[S^{\bm{Q}}_{i\leftarrow k}(u-v)]$ on the RHS to obtain the next edge along the path.
Iterating this procedure for the non-zero terms on the RHS, we obtain a path from component $j$ to $i$, proving that $\ee[S^{\bm{Q}}_{i\leftarrow j}(u)]>0.$

\noindent
$(i) \Leftrightarrow (ii)$.
We can use the same argument, now for $\bm{\lambda}$.
We have that $\ee[S^{\bm{\lambda}}_{i\leftarrow j}(u)]$ satisfies
\begin{align}
    \ee[S^{\bm{\lambda}}_{i\leftarrow j}(u)] = \ee[B_{ij}] g_{ij}(u) + \sum_{m=1}^d \ee[B_{mj}] \int_0^u g_{mj}(v) \ee[S^{\bm{\lambda}}_{i\leftarrow  m}(u-v)]\mathrm{d}v,
\end{align}
due to the distributional equality for $S^{\bm{\lambda}}_{i\leftarrow j}(u)$ that is stated in (\ref{eq: distributional equality S_ij}).
A similar reasoning as above yields the result.
\end{proof}

{\small

}

\end{document}